\newtheorem{thm}{Theorem}[section]
\theoremstyle{plain}
\newtheorem{lem}[thm]{Lemma}
\newtheorem{prop}[thm]{Proposition}
\newtheorem{cor}[thm]{Corollary}
\theoremstyle{definition}
\theoremstyle{remark}
\newtheorem{rem}[thm]{Remark}
\numberwithin{equation}{section}
\newcommand{\A}{\CMcal A}
\newcommand{\Ap}{{\mathcal A^\pi}}
\newcommand{\la}{\lambda}
\newcommand{\osp}{\mathfrak{osp}}
\newcommand{\sq}{\sqrt{\pi}}
\newcommand{\qq}{{\bf q}}
\newcommand{\tha}{\theta}
\newcommand{\vep}{\varepsilon}
\newcommand{\vv}{{\bf v}}
\newcommand{\wtd}{\widetilde}
\newcommand{\Fr}{{\rm Fr}}
\newcommand{\N}{\mathbb N}
\newcommand{\Q}{{\mathbb{Q}}}
\newcommand{\Z}{{\mathbb Z}}
\newcommand{\curlyA}{{{\A}}}
\newcommand{\fA}{{}_\A{\mathbf{f}}}
\newcommand{\fd}{{\bf f}^\diamond}
\newcommand{\fR}{{}_R{\mathbf{f}}}
\newcommand{\fRd}{{}_R{\mathbf{f}^\diamond}} 
\newcommand{\kf}{{\mathfrak f}}
\newcommand{\UU}{{\bf U}}
\newcommand{\AU}{\vphantom{|}_{\curlyA}{\bf U}}
\newcommand{\UUdot}{\dot{\bold{U}}}
\newcommand{\UUAdot}{{}_\A\dot{\bold{U}}}
\newcommand{\UURdot}{{}_{R}\dot{\bold{U}}}
\newcommand{\bbinom}[2]{\begin{bmatrix}#1 \\ #2\end{bmatrix}}
\newcommand{\zero}{{\bar{0}}}
\newcommand{\one}{{\bar{1}}}
\newcommand{\ang}[1]{\left\langle#1\right\rangle}
\newcommand{\bra}[1]{\left[#1\right]}
\renewcommand{\bar}[1]{\overline{#1}}
\begin{document}
\title[Quantum supergroups at roots of 1]{Quantum  supergroups VI.  Roots of $1$} 

\author[Christopher Chung]{Christopher Chung}
\author[Thomas Sale]{Thomas Sale}
\author[Weiqiang Wang]{Weiqiang Wang}
\address{Department of Mathematics, University of Virginia, Charlottesville, VA 22904}
\email{cc2wn@virginia.edu (Chung), tws2mb@virginia.edu (Sale), ww9c@virginia.edu (Wang)}

\subjclass[2010]{Primary 17B37.}

\begin{abstract}
A quantum covering group is an algebra with parameters $q$ and $\pi$ subject to $\pi^2=1$ and it admits an integral form; it specializes to the usual quantum group at $\pi=1$ and to a quantum supergroup of anisotropic type at $\pi=-1$. In this paper we establish the Frobenius-Lusztig homomorphism and Lusztig-Steinberg tensor product theorem in the setting of quantum covering groups at roots of 1. The specialization of these constructions at $\pi=1$ recovers Lusztig's constructions for quantum groups at roots of 1. 
\end{abstract}

\keywords{Quantum groups, quantum covering groups, roots of 1, Frobenius-Lusztig homomorphism}

\maketitle

\setcounter{tocdepth}{1}
 \tableofcontents

\section{Introduction}

\subsection{}

A Drinfeld-Jimbo quantum group with the quantum parameter $q$ admits an integral $\Z[q,q^{-1}]$-form; its specialization at $q$ being a root of 1 were studied by Lusztig in \cite{Lu90a, Lu90b}, \cite[Part V]{Lu94} and also by many other authors. In these works Lusztig developed the quantum group version of Frobenius homomorphism and Frobenius kernel (known as small quantum groups), as a quantum analogue of several classical concepts arising from algebraic groups in a prime characteristic. The quantum groups at roots of 1 and their representation theory form a substantial part of Lusztig's program on modular representation theory, and they have further impacted other areas including geometric representation theory and categorification. 

A quantum covering group $\UU$, which was introduced in \cite{CHW1} (cf. \cite{HW15}),  is an algebra defined via super Cartan datum, which depends on parameters $q$ and $\pi$ subject to $\pi^2=1$. A quantum covering group specializes at $\pi=1$ to a quantum group and at $\pi=-1$ to a quantum supergroup of anisotropic type (see \cite{BKM98}). Half the quantum covering group with parameter $\pi$ with $\pi^2=1$ appeared first in \cite{HW15} in an attempt to clarify the puzzle why quantum groups are categorified once more by the (spin) quiver Hecke superalgebras introduced in \cite{KKT16}. There has been much further progress on odd/spin/super categorification of quantum covering groups; see \cite{KKO14, EL16, BE17}. 

For quantum covering groups, the $(q,\pi)$-integer
$$
[n]_{q,\pi} =\frac{(\pi q)^n-q^{-n}}{\pi q - q^{-1}}\in \N[q,q^{-1},\pi]
$$ 
and the corresponding $(q,\pi)$-binomial coefficients are used, and they help to restore the positivity which is lost in the quantum supergroup with $\pi=-1$. The algebra $\UU$ (and its modified form $\UUdot$, respectively) admits an integral $\Z[q,q^{-1},\pi]$-form $\AU$ (and $\UUAdot$, respectively). In \cite{CHW2} and then \cite{Cl14} the canonical bases arising from quantum covering groups \`a la Luszitig and Kashiwara were constructed, and this provided for the first time a systematic construction of canonical bases for quantum supergroups. The braid group action has been constructed in \cite{CH16} for quantum covering groups, and the first step toward a geometric realization of quantum covering groups was taken in \cite{FL15}.

\subsection{}

To date the main parts of the book of Lusztig \cite{Lu94} have been generalized to the quantum covering group setting, except part V on roots of 1 and Part II on geometric realization in full generality. The goal of this paper is to fill a gap in this direction by presenting a systematic study of the quantum covering groups at roots of 1; we follow closely the blueprint in \cite[Chapters 33--36]{Lu94}.  

\subsection{}

We impose a mild {\em bar-consistent} assumption on the super Cartan datum in this paper, following \cite{HW15, CHW2}. This assumption ensures that the new super Cartain datum and root datum arising from considerations of roots of 1 work as smoothly as one hopes. The assumption turns out to be also most appropriate again for the existence of Frobenius-Lusztig homomorphisms for quantum covering groups. 

We expect that the quantum covering groups of finite type at roots of 1 have very interesting representation theory, which has yet to be developed (compare \cite{AJS94}). 
The categorification of the quantum covering group {\em of rank one} at roots of 1 is already highly nontrivial as shown in the recent work of Egilmez and Lauda \cite{EgL18}. We hope our work on higher rank quantum covering groups could provide a solid algebraic foundation for further super categorification and connection to quantum topology. 

Specializing at $\pi=-1$, we obtain the corresponding results for (half, modified) quantum supergroups of anisotropic type at roots of 1; this class of quantum supergroups includes the quantum supergroup  of type $\mathfrak{osp}(1|2n)$ as the only finite type example. It will be very interesting to develop systematically the quantum supergroups at roots of 1 associated to the {\em basic} Lie superalgebras (i.e., the simple Lie superalgebras with non-degenerate supersymmetric bilinear forms). 
\subsection{}

Below we provide some more detailed descriptions of the results and the organization of the paper. 
In Section~\ref{sec:binom}, we establish several basic properties of the $(q,\pi)$-binomial coefficients at roots of 1, generalizing Lusztig \cite[Chapter 34]{Lu94}. 

In Section~\ref{sec:QCG}, we recall half the quantum covering group $\fR$ and the whole (respectively, the modified) quantum covering group $\UU$ (respectively, ${}_{R}\dot{\bold{U}}$) over some ring $R^\pi$, associated to a super Cartan datum. We give a presentation of ${}_{R}\dot{\bold{U}}$ and a presentation of the quasi-classical counterpart  $\fRd$ of $\fR$, generalizing  \cite[33.2]{Lu94}. 

Our Section~\ref{sec:Frob} is a generalization of \cite[Chapter 35]{Lu94}. We establish in Theorem~\ref{th:Frob'} a $R^\pi$-superalgebra homomorphism 
$\Fr': \fRd \longrightarrow \fR$, which sends the generators $\theta_i^{(n)}$ to $\theta_i^{(n\ell_i)}$ for all $i\in I, n$.
This is followed by the Lusztig-Steinberg tensor product theorem for $\fR$ which we prove in Theorem~\ref{thm:tensor}. 
Next we establish in Theorem~\ref{th:Frob} the Frobenius-Lusztig homomorphism $\Fr: \fR \longrightarrow \fRd$
which sends the generators $\theta_i^{(n)}$ to $\theta_i^{(n/\ell_i)}$ if $\ell_i$ divides $n$, and to $0$ otherwise, for all $i\in I, n$. We further extend the homomorphism $\Fr$ to the modified quantum covering group in Theorem~\ref{th:Frob2}. 

Finally in Section~\ref{sec:small}, we formulate the small quantum covering groups and show it is a Hopf algebra. In case of finite type (i.e., corresponding to $\mathfrak{osp}(1|2n)$ or $\mathfrak{so}(1+2n)$), we show that the small quantum covering group is finite dimensional.

\subsection*{Acknowledgement} 
This research is partially supported by Wang's NSF grant DMS-1702254 (including GRA supports for the two junior authors).  WW thanks Adacemia Sinica Institute of Mathematics (Taipei) for the hospitality and support during a past visit, where some of the work was carried out.

\section{The $(q,\pi)$-binomials at roots of $1$}
  \label{sec:binom}

In this section, we establish several basic formulas of the $(q,\pi)$-binomial coefficients at roots of 1. They specialize to the formulas in \cite[Chapter 34]{Lu94} at $\pi=1$.

\subsection{}

Let $\pi$ and $q$ be formal indeterminants such that $\pi^2=1$. Fix $\sq$ such that $\sq^2 =\pi$. In contrast to earlier papers on the quantum covering groups \cite{CHW1, CHW2, CFLW, Cl14}, it is often helpful and sometimes crucial   for the ground rings considered in this paper to contain $\sq$, and for the sake of simplicity we choose to do so uniformly from the outset.  For any ring $S$ with 1, define the new ring
\[
S^{\pi} = S\otimes_\Z \Z[\sq].
\]
We shall use often the following two rings:
\[
\A =\Z[q,q^{-1}], \qquad \Ap =\Z[q,q^{-1},\sq].
\] 

Let $\N =\{0,1,2,\ldots\}$. For $a\in\Z$ and $n\in\N$, we define the  {\em $(q,\pi)$-integer}
\begin{equation*}
 [a]_{q,\pi} =\frac{(\pi q)^a-q^{-a}}{\pi q - q^{-1}}\in \Ap,
\end{equation*}
and then define the corresponding $(q,\pi)$-factorials and $(q,\pi)$-binomial coefficients
by
\begin{equation*}
[n]_{q,\pi}^! =\prod_{i=1}^n [i]_{q,\pi},\qquad
\bbinom{a}{n}_{q,\pi} =\frac{\prod_{i=1}^n[a+1-i]_{q,\pi}}{[n]_{q,\pi}^!}.
\end{equation*}
For an indeterminant $v$, we denote the $v$-integers
\begin{equation*}
 [a]_v=\frac{v^a-v^{-a}}{v - v^{-1}}
\end{equation*}
and we similarly define the $v$-factorials $[n]_v^!$ and $v$-binomial coefficients $\bbinom{a}{n}_v$.
We denote by $\binom{a}{n}$ the classical binomial coefficients.

\subsection{}  

In this paper, the notation $v$ is auxiliary, and we will identify
$$v:=\sq  q,
$$
and hence, for $n,t \in \N$, 
\begin{align}
  \label{eq:vq}
\begin{split}
 [n]_{q,\pi}  =\sq^{n-1}[n]_v, & \qquad
 [n]_{q,\pi}^!  =\sq^{n(n-1)/2} [n]_v^!,
   \\ \\
 \bbinom{n}{t}_{q,\pi} &=\sq^{(n-t)t} \bbinom{n}{t}_v.
 \end{split}
\end{align}

\subsection{}  
\label{subsec:root1}

Fix $ \ell \in \Z_{>0}$ and let $\ell' =\ell$ or $2\ell$ if $\ell$ is odd and let $\ell' = 2\ell$ if $\ell$ is even. Let 
\[
{\A}' = {\A}/\langle f(q) \rangle, 
\]
where ${\A}/\langle f(q) \rangle$ denotes the ideal generated by the $\ell'$-th cyclotomic polynomial $f(q)$; we denote by $\vep \in \A'$ the image of $q \in \A$.  Take $R$ to be an ${\A}'$-algebra with 1 (and so also an ${\A}$-algebra).
Introduce the following root of 1 in $R^\pi$: 
\begin{equation}
  \label{eq:qq}
\qq = \sq \vep \in R^{\pi}. 
\end{equation}
Then the element
\[
\vv :=\sq \qq \in R^{\pi}
\] 
satisfies that 
\begin{equation}
   \label{eq:order}
\vv^{2\ell} =1, \qquad \vv^{2t}\neq 1 \quad (\forall t \in \Z, \ell >t  > 0).
\end{equation}
Consider the specialization homomorphism  $\phi: \A^{\pi} \rightarrow R^{\pi}$
which sends $q$ to $\qq$ and $\sq$ to $\sq$. We shall denote by
$[n]_{\qq,\pi}$ and $\bbinom{n}{t}_{\qq,\pi}$ the images of $[n]_{q,\pi}$ and  $\bbinom{n}{t}_{q,\pi}$ under $\phi$ respectively, and so on.

The following lemma is an analogue of \cite[Lemma 34.1.2]{Lu94}, which can be in turn recovered by setting $\pi=1$ below.

\begin{lem}
  \label{lem:34.1.2}
\begin{enumerate}
\item[(a)] If $t \in \Z_{>0}$ is not divisible by $\ell$ and $n\in \Z$ is divisible by $\ell$, then $ \bbinom{n}{t}_{\qq,\pi} =0.$

\item[(b)]
If $n_1\in\Z$ and $t_1\in \N$, then we have 
$$\bbinom{\ell n_1}{\ell t_1}_{\qq,\pi} =\pi^{\ell^2 t_1 (n_1- (t_1-1)/2)} \qq^{\ell^2t_1(n_1+1)} \binom{n_1}{t_1}.
$$

\item[(c)]
Let $n\in \Z$ and $t\in \N$. Write $n=n_0+\ell n_1$ with $n_0, n_1\in \Z$ such that $0\le n_0 \le \ell-1$ and write 
$t=t_0+\ell t_1$ with $t_0, t_1\in \N$ such that $0\le t_0 \le \ell-1$.  Then we have
 $$
 \bbinom{n}{t}_{\qq,\pi} 
 =  \pi^{\ell (n_0-t_0)t_1 +     \ell^2(n_1 -(t_1-1)/2 )t_1} 
  \qq^{\ell(n_0t_1 -n_1 t_0) + \ell^2(n_1+1)t_1}    \bbinom{n_0}{t_0}_{\qq,\pi} \binom{n_1}{t_1}.
 $$
\end{enumerate}
\end{lem}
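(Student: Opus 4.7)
The plan is to reduce each assertion to its $v$-binomial counterpart in \cite[Lemma 34.1.2]{Lu94}. Specializing \eqref{eq:vq} through $\phi$ yields the basic conversion
\[
\bbinom{n}{t}_{\qq,\pi} \;=\; \sq^{(n-t)t}\,\bbinom{n}{t}_{\vv},
\]
and by \eqref{eq:order} the element $\vv=\sq\qq\in R^\pi$ plays the role of Lusztig's root-of-unity variable. Part (a) is then immediate: \cite[Lemma 34.1.2(a)]{Lu94} gives $\bbinom{n}{t}_{\vv}=0$ whenever $\ell\mid n$ and $\ell\nmid t$, and multiplication by the $\sq$-prefactor preserves zero.

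For (b), I would substitute Lusztig's formula $\bbinom{\ell n_1}{\ell t_1}_{\vv}=\vv^{\ell^2 t_1(n_1-t_1)}\binom{n_1}{t_1}$ into the conversion identity with $n=\ell n_1$, $t=\ell t_1$, and use $\sq\vv=\sq(\sq\qq)=\pi\qq$ to reach $(\pi\qq)^{\ell^2 t_1(n_1-t_1)}\binom{n_1}{t_1}$. To bring this into the stated form, I need to verify the auxiliary identity
\[
\pi^{-\ell^2 t_1(t_1+1)/2}\,\qq^{-\ell^2 t_1(t_1+1)}=1
\]
in $R^\pi$. Expanding $\qq=\sq\vep$, the $\sq$-exponent equals $\ell^2 t_1(t_1+1)$, which is a multiple of $4$ (since $t_1(t_1+1)$ is always even) and is therefore killed by $\sq^4=1$; the residual $\vep^{\ell^2 t_1(t_1+1)}$ is $1$ because $\ell'\mid \ell^2 t_1(t_1+1)$ in both the $\ell'=\ell$ case (for odd $\ell$) and the $\ell'=2\ell$ case.

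For (c), the $v$-binomial $\ell$-adic decomposition \cite[Lemma 34.1.2(c)]{Lu94}---itself deducible from (a), (b), and the $v$-Pascal identity---expresses $\bbinom{n}{t}_{\vv}$ as a $\vv$-power times $\bbinom{n_0}{t_0}_{\vv}\binom{n_1}{t_1}$. Plugging into the conversion identity, repeating the $\vv\to(\sq,\qq)$ substitution used in (b), and rewriting $\bbinom{n_0}{t_0}_{\vv}$ back in terms of $\bbinom{n_0}{t_0}_{\qq,\pi}$ through the same conversion, yields the claimed formula after the same parity/divisibility cleanup. The main obstacle throughout is the exponent bookkeeping: confirming that residual $\sq$-powers reduce cleanly to $\pi$-powers via $\sq^4=1$ and $\sq^2=\pi$, and that residual $\vep$-powers vanish because $\ell'$ divides the relevant exponent. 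Once these checks are recorded, the stated formulas follow by direct substitution and regrouping.
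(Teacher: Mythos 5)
Your proposal is correct and takes essentially the same route as the paper: both transfer Lusztig's Lemma 34.1.2 to the $(\qq,\pi)$-setting through the conversion \eqref{eq:vq} with $\vv=\sq\qq$, and your quoted exponent $\ell^2 t_1(n_1-t_1)$ in Lusztig's part (b) matches the paper's $\ell^2 t_1(n_1+1)$ because they differ by $\vv^{\ell^2 t_1(t_1+1)}=1$. One bookkeeping correction: in your auxiliary identity the total $\sq$-exponent is $2\ell^2 t_1(t_1+1)$ rather than $\ell^2 t_1(t_1+1)$ (the latter need not be divisible by $4$ when $\ell$ is odd, e.g.\ $t_1=1$), but the identity itself is true and is seen most quickly from $\pi^{\ell^2 t_1(t_1+1)/2}\qq^{\ell^2 t_1(t_1+1)}=(\pi\qq^2)^{\ell^2 t_1(t_1+1)/2}=\vv^{2\ell\cdot\ell t_1(t_1+1)/2}=1$.
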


\begin{proof}
One proof would be by imitating the arguments for  \cite[Lemma 34.1.2]{Lu94}.
Below we shall use an alternative and quicker approach, which is to convert \cite[Lemma 34.1.2]{Lu94} into our current statements using \eqref{eq:vq} via the substitution $\vv =\sq \qq$. 
 Part (a) immediately follows from \cite[Lemma 34.1.2(a)]{Lu94}.
  
 (b) By applying \cite[Lemma 34.1.2(b)]{Lu94} to $\bbinom{\ell n_1}{\ell t_1}_\vv$ and using \eqref{eq:vq}, we have
 \begin{align*}
 \bbinom{\ell n_1}{\ell t_1}_{\qq,\pi} &= \sq^{\ell t_1(\ell n_1 -\ell t_1)} \bbinom{\ell n_1}{\ell t_1}_\vv
 =\sq^{\ell^2 t_1(n_1 - t_1)} \vv^{\ell^2 t_1(n_1 +1)}  \binom{n_1}{t_1},
 \end{align*}
 which can be easily shown to be equal to the formula as stated in the lemma. 
 
 (c) Note that   
 \begin{align}  \label{eq:pipower}
\sq^{(n-t)t}  
 =   \sq^{\ell ((n_0-t_0)t_1 + (n_1 -t_1)t_0)} \sq^{\ell^2(n_1 -t_1)t_1} \sq^{(n_0-t_0)t_0}.
 \end{align}
 By applying \cite[Lemma 34.1.2(c)]{Lu94} to $\bbinom{n}{t}_\vv$ and using \eqref{eq:vq}-\eqref{eq:pipower}, we have
 \begin{align*}
 \bbinom{n}{t}_{\qq,\pi}
  &= \sq^{(n-t)t}  \bbinom{n}{t}_\vv
    \\
   &= \sq^{(n-t)t}  \vv^{\ell (n_0t_1 -n_1t_0) +\ell^2(n_1+1)t_1} \bbinom{n_0}{t_0}_\vv \binom{n_1}{t_1}
    \\
   &= \sq^{\ell ((n_0-t_0)t_1 + (n_1 -t_1)t_0)} \sq^{\ell^2(n_1 -t_1)t_1}
    \sq^{\ell (n_0t_1 -n_1t_0) +\ell^2(n_1+1)t_1} 
    \\
    &\qquad \times \qq^{\ell (n_0t_1 -n_1t_0) +\ell^2(n_1+1)t_1} 
    \left(\sq^{(n_0-t_0)t_0}  \bbinom{n_0}{t_0}_\vv \right)  \binom{n_1}{t_1}
     \\
   &= \pi^{\ell (n_0-t_0)t_1 +     \ell^2(n_1 -(t_1-1)/2 )t_1} 
     \qq^{\ell (n_0t_1 -n_1t_0) +\ell^2(n_1+1)t_1}  \bbinom{n_0}{t_0}_{\qq,\pi} \binom{n_1}{t_1}.
\end{align*} 
The lemma is proved. 
\end{proof}

 Note that, due to our choice of $\qq =\sqrt{\pi} \vep$, we also have an analogue of equation~ (e) in the proof of \cite[Lemma 34.1.2]{Lu94}:
\begin{equation}   \label{eq:34.1.2e}
\vv^{\ell^2 + \ell} = \pi^{(\ell+1)\ell/2} \qq^{\ell^2+\ell} =(-1)^{\ell+1}.
\end{equation}

\subsection{}

The following is an analogue of \cite[\S 34.1.3(a)]{Lu94}.

\begin{lem}
 \label{lem:34.1.3}
Let $b \ge 0$. Then 
$$
\frac{[\ell b]_{\qq,\pi}^!}{([\ell]_{\qq,\pi}^!)^b} =b! (\pi\qq)^{\ell^2 b(b-1)/2}.
$$
\end{lem}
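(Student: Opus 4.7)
The plan is to mirror the ``alternative and quicker approach'' used in the preceding proof: convert the identity into its $\vv$-analogue through \eqref{eq:vq}, then invoke \cite[\S34.1.3(a)]{Lu94} as a black box. Applying the second formula of \eqref{eq:vq} to numerator and denominator separately and simplifying the resulting $\sq$-exponent $\ell b(\ell b-1)/2-b\ell(\ell-1)/2=\ell^2 b(b-1)/2$, I would first write
\[
\frac{[\ell b]_{\qq,\pi}^!}{([\ell]_{\qq,\pi}^!)^b}
=\sq^{\ell^2 b(b-1)/2}\cdot \frac{[\ell b]_{\vv}^!}{([\ell]_{\vv}^!)^b}.
\]

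Next, \cite[\S34.1.3(a)]{Lu94} (the $\pi=1$ version of what we are proving, and a classical fact about $v$-integers at a root of unity) supplies, after reduction using $\vv^{2\ell}=1$ from \eqref{eq:order},
\[
\frac{[\ell b]_{\vv}^!}{([\ell]_{\vv}^!)^b}= b!\,\vv^{\ell^2 b(b-1)/2}.
\]
Substituting and using $\sq\,\vv=\sq\cdot\sq\,\qq=\pi\qq$ then immediately gives $b!\,(\pi\qq)^{\ell^2 b(b-1)/2}$, as desired.

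Should one prefer a self-contained derivation avoiding the citation, an equivalent route is to telescope
\[
\frac{[\ell b]_{\qq,\pi}^!}{([\ell]_{\qq,\pi}^!)^b}=\prod_{k=1}^{b}\bbinom{\ell k}{\ell}_{\qq,\pi}
\]
and evaluate each factor by Lemma~\ref{lem:34.1.2}(b) with $n_1=k$, $t_1=1$; this yields $b!\,\pi^{\ell^2 b(b+1)/2}\qq^{\ell^2 b(b+3)/2}$. The stated expression is then recovered by absorbing a factor of $(\pi^{\ell}\qq^{2\ell})^{\ell b}=\vv^{2\ell^2 b}=1$ into the exponents. The only real obstacle, in either route, is the bookkeeping of $\pi$- and $\qq$-exponents modulo the root-of-unity relations in \eqref{eq:order}; no idea beyond Lemma~\ref{lem:34.1.2} is needed.
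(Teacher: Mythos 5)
Your primary route is exactly the paper's proof: apply \eqref{eq:vq} to numerator and denominator (the $\sq$-exponent simplifies to $\ell^2 b(b-1)/2$), quote \cite[\S 34.1.3(a)]{Lu94} for the $\vv$-factorial identity, and conclude via $\sq\,\vv=\pi\qq$. Your alternative telescoping argument through Lemma~\ref{lem:34.1.2}(b), using $\pi^{\ell}\qq^{2\ell}=\vv^{2\ell}=1$ to adjust exponents, is also correct but not needed; the proposal is sound and essentially identical to the paper's argument.
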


\begin{proof}
Recall $\vv =\sq \qq$.
Using \eqref{eq:vq} and \cite[\S 34.1.3(a)]{Lu94}, we have
 \begin{align*}
[\ell b]_{\qq,\pi}^!/ ([\ell]_{\qq,\pi}^!)^b
 &= \sq^{ \ell b (\ell b-1)/2 -b \ell(\ell-1)/2}  [\ell b]_\vv^!/ ([\ell]_\vv^!)^b
 \\
 &= \sq^{ \ell^2 b (b-1)/2} b! \vv^{\ell^2 b(b-1)/2} 
 =b! (\pi\qq)^{\ell^2 b(b-1)/2}.
\end{align*} 
The lemma is proved. 
\end{proof}

Below is a $\pi$-enhanced version of \cite[Lemma 34.1.4]{Lu94}.

\begin{lem}
\label{lem:34.1.4}
Suppose that $0 \leq r \leq a < \ell.$ Then,
\[
\sum_{s = 0}^{\ell - a - 1} (-1)^{\ell - r + 1 + s}\pi^{\binom{s+1}{2} + s(r-\ell)}\qq^{-(\ell - r)(a - \ell + 1 + s) + s}\bbinom{\ell - r}{s}_{\qq,\pi} = \pi^{\binom{r}{2} - \binom{l}{2} - a(r - l)}\qq^{\ell(a - r)}\bbinom{a}{r}_{\qq,\pi}.
\] 
\end{lem}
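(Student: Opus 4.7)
The plan is to follow the strategy used in the proofs of Lemmas~\ref{lem:34.1.2} and~\ref{lem:34.1.3}: reduce the asserted identity to its $v$-analogue, namely \cite[Lemma 34.1.4]{Lu94}, by converting all $(\qq,\pi)$-quantities into $v$-quantities via the substitution $v=\sq\qq$ from \eqref{eq:vq}, and then verify that the resulting $\sq$-powers match on both sides.

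Concretely, I would first use \eqref{eq:vq} to rewrite
\[
\bbinom{\ell-r}{s}_{\qq,\pi} = \sq^{(\ell-r-s)s}\bbinom{\ell-r}{s}_v, \qquad \bbinom{a}{r}_{\qq,\pi} = \sq^{(a-r)r}\bbinom{a}{r}_v,
\]
and replace each factor $\qq^k$ by $\sq^{-k}v^k$ (valid since $\sq^4=1$) and each $\pi$ by $\sq^2$. Both sides of the identity then take the form $\sq^{\ast}$ times the corresponding side of the $v$-version of the identity, which is precisely \cite[Lemma 34.1.4]{Lu94} (and is recovered from the statement by setting $\pi=1$).

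The heart of the argument is a direct check that the total $\sq$-exponent of the $s$-th summand on the left, namely
\[
A(s) = 2\binom{s+1}{2}+2s(r-\ell)+(\ell-r)(a-\ell+1+s)-s+(\ell-r-s)s,
\]
equals the $\sq$-exponent of the right-hand side,
\[
B = 2\binom{r}{2}-2\binom{\ell}{2}-2a(r-\ell)-\ell(a-r)+(a-r)r,
\]
and that both simplify to $(\ell-r)(a-\ell+1)$. Using $2\binom{s+1}{2}=s^2+s$ and $(\ell-r-s)s=(\ell-r)s-s^2$, the $s^2$ terms in $A(s)$ cancel and the coefficient of $s$ collapses to $1+2(r-\ell)+2(\ell-r)-1=0$, leaving $A(s)=(\ell-r)(a-\ell+1)$ independently of $s$; a parallel expansion of $B$ yields the same value.

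Once this identity of exponents is in hand, the common $\sq^{(\ell-r)(a-\ell+1)}$ factor pulls out of the sum on the left and cancels with the matching factor on the right, reducing the claim at once to \cite[Lemma 34.1.4]{Lu94}. The main obstacle is the careful bookkeeping of $\sq$-powers; the key feature that makes the reduction work termwise is the $s$-independence of $A(s)$, which is exactly what permits the sum to be treated as a common scalar times a $v$-identity rather than requiring a separate summation argument.
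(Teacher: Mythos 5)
Your proposal is correct and is essentially the paper's own argument: both reduce the identity to \cite[Lemma 34.1.4]{Lu94} via the substitution $\vv=\sq\qq$ and \eqref{eq:vq}, the only cosmetic difference being that the paper plugs $\vv=\sq\qq$ into Lusztig's $v$-identity and regroups the resulting $\sq$-powers into $\pi$-powers, while you run the conversion in the opposite direction and cancel the common ($s$-independent) factor $\sq^{(\ell-r)(a-\ell+1)}$. Your exponent bookkeeping checks out, so no changes are needed.
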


\begin{proof}
Plugging $\vv = \sq \qq$ into \cite[Lemma 34.1.4]{Lu94} and using \eqref{eq:vq}, we obtain
\[
\begin{split}
\sum_{s = 0}^{\ell - a - 1} (-1)^{\ell - r + 1 + s}\sq^{-(\ell - r)(a - \ell + 1 + s) + s +s(s - \ell + r)} & \qq^{-(\ell - r)(a - \ell + 1 + s) + s}\bbinom{\ell - r}{s}_{\qq,\pi} \\
&= \sq^{\ell(a - r) + r(r - a)}\qq^{\ell(a - r)}\bbinom{a}{r}_{\qq,\pi}.
\end{split}
\]
Rearranging the $\sq$ terms, we have
\begin{align*}
\sum_{s = 0}^{\ell - a - 1} & (-1)^{\ell - r + 1 + s}\sq^{s(s+1) + 2s(r-\ell)} \qq^{-(\ell - r)(a - \ell + 1 + s) + s}\bbinom{\ell - r}{s}_{\qq,\pi} \\
&= \sq^{r(r-1) -\ell(\ell - 1) - 2a(r - l)}\qq^{\ell(a - r)}\bbinom{a}{r}_{\qq,\pi}.
\end{align*}
from which the desired formula is immediate.
\end{proof}


\section{Quantum covering groups at roots of 1}
  \label{sec:QCG}
  
In this section we recall the notion of super Cartan/root datum and the quantum covering groups. Then we obtain presentations of the modified quantum covering groups and their quasi-classical counterpart. 

\subsection{} 
  \label{subsec:data}

The following is an analogue of \cite[\S2.2.4-5]{Lu94}. 

A {\em Cartan datum} is a pair $(I,\cdot)$ consisting of a finite
set $I$ and a symmetric bilinear form $\nu,\nu'\mapsto \nu\cdot\nu'$
on the free abelian group $\Z[I]$ with values in $\Z$ satisfying
\begin{enumerate}
 \item[(a)] $d_i= \frac{i\cdot i}{2}\in \Z_{>0}$;

  \item[(b)]
$2\frac{i\cdot j}{i\cdot i}\in -\N$ for $i\neq j$ in $I$.
\end{enumerate}
If the datum can be decomposed as $I=I_0 \coprod I_1$ such that
\begin{enumerate}
        \item[(c)] $I_1 \neq\emptyset$,
        \item[(d)] $2\frac{i\cdot j}{i\cdot i} \in 2\Z$ if $i\in I_1$,
\end{enumerate}
then it is called a {\em super Cartan datum}; cf. \cite{CHW1}. We denote the parity $p(i)=0$ for $i\in I_0$ and $p(i)=1$ for $i\in I_1$. 

Following \cite{CHW1}, we will always assume a super Cartan datum satisfies the additional  {\em bar-consistent} condition:
\begin{enumerate}
        \item[(e)] $\frac{i\cdot i}{2} \equiv p(i) \mod 2, \quad \forall i\in I.$
\end{enumerate}

A root datum of type $(I, \cdot)$ consists of 2 finite rank lattices $X, Y$ with a perfect bilinear pairing $\langle \cdot, \cdot \rangle: Y \times X \rightarrow \Z$, 2 embeddings $I \hookrightarrow X$ $(i \mapsto i')$ and $I \hookrightarrow Y$ $(i \mapsto i)$ such that $\langle i, j' \rangle =2\frac{i\cdot j}{i\cdot i}$, $\forall i,j\in I$. 
Moreover, we will assume throughout the paper that the root datum is {\em $X$-regular}, i.e., that the simple roots are linearly independent in $X$.

Define
\[
\ell_i = \min \{ r \in \Z_{>0} \mid r (i\cdot i)/2 \in \ell \Z\}.
\]
The next lemma follows by the definition of $\ell_i$ and the bar-consistency condition of $I$.

\begin{lem}
 \label{lem:parity1}
For each $i\in I_1$,   $\ell_i$ has the same parity as $\ell$.
\end{lem}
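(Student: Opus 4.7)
The plan is to unwind the definition of $\ell_i$ as a divisor of $\ell$ controlled by $d_i := (i\cdot i)/2$, and then use the bar-consistent condition to force $d_i$ to be odd when $i\in I_1$.

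First, I would rewrite $\ell_i$ in closed form. By definition, $\ell_i$ is the smallest positive integer $r$ with $r d_i \in \ell\Z$, which is exactly $\ell_i = \ell/\gcd(\ell,d_i)$. Next, I would invoke the bar-consistent assumption (e) from Section~\ref{subsec:data}: for $i \in I_1$ we have $p(i)=1$, hence $d_i \equiv 1 \pmod 2$, i.e.\ $d_i$ is odd. Consequently $g := \gcd(\ell, d_i)$ divides the odd number $d_i$, so $g$ itself is odd.

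From here the parity comparison is immediate. Writing $v_2$ for the $2$-adic valuation, we get
\[
v_2(\ell_i) = v_2(\ell) - v_2(g) = v_2(\ell),
\]
since $v_2(g)=0$. So $\ell_i$ is odd iff $\ell$ is odd, which is the claim.

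I do not foresee a genuine obstacle here; the one thing to be slightly careful about is identifying $\min\{r>0 : r d_i \in \ell\Z\}$ with $\ell/\gcd(\ell,d_i)$ (a short elementary number theory check). The whole argument hinges on the bar-consistent axiom (e), without which $d_i$ for $i\in I_1$ could be even and the conclusion would fail; this is why the assumption was introduced at the beginning of Section~\ref{subsec:data}.
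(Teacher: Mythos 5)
Your proof is correct and is essentially the paper's own argument: the paper simply states that the lemma ``follows by the definition of $\ell_i$ and the bar-consistency condition,'' and your closed form $\ell_i=\ell/\gcd(\ell,d_i)$ combined with the oddness of $d_i=(i\cdot i)/2$ for $i\in I_1$ is exactly that argument, spelled out. No gaps; the identification of the minimum with $\ell/\gcd(\ell,d_i)$ is the standard elementary check you note.
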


Then $(I,\diamond)$ is a new root datum by \cite[2.2.4]{Lu94}, where we let 
\[
i\diamond j =(i\cdot j) \ell_i \ell_j, \quad \forall i,j \in I.
\]
Note that if $\ell$ is odd, then $(I,\diamond)$ is a super Cartan datum with the same parity 
decomposition $I =I_0 \cup I_1$ as for $(I,\cdot)$ by Lemma~\ref{lem:parity1};
if $\ell$ is even, then $(I,\diamond)$ is a (non-super) Cartan datum with $I_1 =\emptyset$.

We shall write $Y^\diamond, X^\diamond$ in this paper what Lusztig \cite[2.2.5]{Lu94}  
denoted by $Y^*, X^*$ respectively, and we will use superscript $^\diamond$
in related notation associated to $(Y^\diamond, X^\diamond, I, \diamond)$ below. More explicitly, we set $X^\diamond =\{\zeta \in X | \langle i, \zeta \rangle \in \ell_i \Z, \forall i \in I\}$ and $Y^\diamond =\text{Hom}_\Z (X^\diamond, \Z)$ with the obvious pairing. The embedding $I \hookrightarrow X^\diamond$ is given by $i \mapsto i'^\diamond =\ell_i i'\in X$, while embedding $I \hookrightarrow Y^\diamond$ is given by $i \mapsto i^\diamond \in Y^\diamond$ whose value at any $\zeta \in X^\diamond$ is $\langle i, \zeta \rangle / \ell_i$. It follows that $\langle i^\diamond, j'^\diamond \rangle = 2 i \diamond j /i\diamond i$.

If $\ell$ is odd, then  $(Y^\diamond, X^\diamond, \cdots)$ is a new super root datum satisfying (a)-(d) above and in addition the bar-consistency condition (e).
Indeed, we have $2\frac{i\diamond j}{i\diamond i} =2\frac{i\cdot j}{i\cdot i} \frac{\ell_j}{\ell_i} \in 2\Z$ by Lemma~\ref{lem:parity1}, whence (d), and  $\frac{i\diamond i}{2} = \frac{i\cdot i}{2} \ell_i^2 \equiv p(i) \mod 2$ by Lemma~\ref{lem:parity1}, whence (e).
If $\ell$ is even, then $(Y^\diamond, X^\diamond, \cdots)$ is a new (non-super) root datum just as in \cite[2.2.5]{Lu94}.

\subsection{}


By \cite[Propositions 1.4.1, 3.4.1]{CHW1},  the unital $\mathbb{Q}(q)^{\pi}$-superalgebra $\bold{f}$ is generated by 
$\theta_i$ $(i\in I)$ subject to the super Serre relations
\[
\sum_{n+n'=1-\ang{i,j'}}(-1)^{n'}\pi_i^{n'p(j)+\binom{n'}{2}}
\theta_i^{(n)}\theta_j\theta_i^{(n')}=0
\]
for any $i\neq j$ in $I$; here a generator $\theta_i$ is even if and only if $i\in I_0$. 
There is an ${\A}^{\pi}$-form for $\bold{f}$, which we call ${}_{{\A}}\bold{f}$. It is generated by the divided powers $\theta_{i}^{(n)} = \theta_{i}^{n}/[n]_{q_{i}, \pi_{i}}^{!}$ for all $i\in I, n\ge 1.$  As $R^{\pi}$ is an ${\A}^{\pi}$-algebra (cf. \S \ref{subsec:root1}), by a base change we define ${}_{R}\bold{f} = R^{\pi} \otimes_{{\A}^{\pi}} {}_{{\A}}\bold{f}.$ The algebras $'\bold{f}^{\diamond}$, $\bold{f}^{\diamond}$ and ${}_{R}\bold{f}^{\diamond}$ are defined in the same way using the Cartan datum $(I, \diamond).$

Let $\bold{U}$ denote the quantum covering group associated to the root datum $(Y, X, . . .)$ introduced in \cite{CHW1}. By \cite[Proposition 3.4.2]{CHW1}, $\UU$ is a unital $\mathbb{Q}(q)^{\pi}$-superalgebra with generators
\[\
E_i\quad(i\in I),\quad F_i\quad (i\in I), \quad J_{\mu}\quad (\mu\in
Y),\quad K_\mu\quad(\mu\in Y),
 \]
subject to the relations (a)-(f) below for
all $i, j \in I, \mu, \mu'\in Y$:
\[
\tag{a} K_0=1,\quad K_\mu K_{\mu'}=K_{\mu+\mu'},
\]
\[
J_{2\mu}=1, \quad J_\mu J_{\mu'}=J_{\mu+\mu'},
\quad
J_\mu K_{\mu'}=K_{\mu'}J_{\mu},
\]
\[
\tag{b} K_\mu E_i=q^{\ang{\mu,i'}}E_iK_{\mu}, \quad
J_{\mu}E_i=\pi^{\ang{\mu,i'}} E_iJ_{\mu},
\]
\[
\tag{c} \; K_\mu F_i=q^{-\ang{\mu,i'}}F_iK_{\mu}, \quad J_{\mu}F_i=
\pi^{-\ang{\mu,i'}} F_iJ_{\mu},
\]
\[
\tag{d} E_iF_j-\pi^{p(i)p(j)}
F_jE_i=\delta_{i,j}\frac{\wtd{J}_{i}\wtd{K}_i-\wtd{K}_{-i}}{\pi_iq_i-q_i^{-1}},
\]
\[
\tag{e}
\sum_{n+n'=1-\ang{i,j'}}(-1)^{n'}\pi_i^{n'p(j)+\binom{n'}{2}}
E_i^{(n)} E_jE_i^{(n')}=0
\]
\[
\tag{f}
\sum_{n+n'=1-\ang{i,j'}}(-1)^{n'}\pi_i^{n'p(j)+\binom{n'}{2}}
F_i^{(n)}F_jF_i^{(n')}=0
\]
where for any element $\nu=\sum_i \nu_i i\in \Z[I]$ we have set
$\wtd{K}_\nu=\prod_i K_{d_i\nu_i i}$, $\wtd{J}_\nu=\prod_i
J_{d_i\nu_i i}$. In particular, $\wtd{K}_i=K_{d_i i}$,
$\wtd{J}_i=J_{d_i i}$. (Under the bar-consistent condition~(e),
$\wtd{J}_i=1$ for $i\in I_\zero$ while $\wtd{J}_i=J_i$ for $i\in
I_\one$.) 
We endow $\UU$ with a $\Z[I]$-grading $|\cdot |$ by setting
$
|E_i|=i,\quad |F_i|=-i,\quad |J_\mu|=|K_\mu|=0.$
The parity on $\UU$ is given by $p(E_i)=p(F_i)=p(i)$ and $p(K_\mu)=p(J_\mu)=0$,

The algebra $\bold{U}$ has an ${\A}^{\pi}$-form ${}_{{\A}}\bold{U}$. By a base change, we obtain ${}_{R}\bold{U} = R^{\pi} \otimes_{{\A}^{\pi}} {}_{{\A}}\bold{U}.$ Let ${}_{R}\bold{U}^{+}$ (resp. ${}_{R}\bold{U}^{-}$) denote the subalgebra of ${}_{R}\bold{U}$ generated by the $E_{i}^{(n)} = E_{i}^{n}/[n]_{\qq_{i}, \pi_{i}}^{!}$ (resp. $F_{i} =F_{i}^{n}/[n]_{\qq_{i}, \pi_{i}}^{!}$). As a $R^{\pi}$-algebra ${}_{R}\bold{f}$ is isomorphic to ${}_{R}\bold{U}^{+}$ (resp. ${}_{R}\bold{U}^{-}$)  via the map $x \mapsto x^{+}$ (resp. $x \mapsto x^{-}$), where $(\theta_{i}^{(n)})^{+} = E_{i}^{(n)}$ (resp. $(\theta_{i}^{(n)})^{-} = F_{i}^{(n)}.$ 

Denote by $X^+=\{\la\in X\mid \langle i, \la \rangle \in \N, \forall i\in I\}$, the set of dominant integral weights. 

For $\la \in X$, let $M(\lambda) $ be the Verma module of $\bold{U}$, and we can naturally identify $M(\la) = \bold{f}$ as $\Q(q)^\pi$-modules. The ${}_{{\A}}\bold{U}$-submodule ${}_\A M(\lambda)$ can be identified with ${}_{{\A}}\bold{f}$ as $\A^\pi$-free modules. For $\la \in X^+$, we define the integrable $\bold{U}$-module $V(\lambda) =M(\la)/J_\la$, where $J_\la$ is the left $\bold{f}$-module generated by $\theta_i^{\langle i, \la \rangle +1}$ for all $i\in I$. Let $ {}_R M(\lambda) = R^\pi \otimes_{\A^\pi} {}_\A M(\lambda)$ for $\la \in X$, and $ {}_R V(\lambda) = R^\pi \otimes_{\A^\pi} {}_\A V(\lambda)$ for $\la \in X^+$.

The algebra $\bold{U}^{\diamond}$ is defined in the same way as $\bold{U}$ based on the root datum $(Y^{\diamond}, X^{\diamond}, . . .).$

Recall from \cite[Definition 4.2]{CFLW} that the modified quantum covering group $\dot{\bold{U}}$ is a $\mathbb{Q}(q)^{\pi}$-algebra without unit which is
generated by the symbols $1_{\lambda}, E_i1_{\lambda}$ and $F_i1_{\lambda}$, for $\lambda\in X$
and $i \in I$, subject to the relations:
\begin{eqnarray*}
 &1_{\lambda} 1_{\lambda'} =\delta_{\lambda, \lambda'} 1_{\lambda},
   \vspace{6pt}\label{eq:modified idemp rel}\\
&   (E_i1_{ \lambda })  1_{\lambda'} =  \delta_{\lambda, \lambda'} E_i1_{ \lambda}, \quad
1_{\lambda'}   (E_i1_{\lambda}) = \delta_{\lambda', \lambda+ i'} E_i1_{\lambda},
     \vspace{6pt}\label{eq:modified E rel}\\
& (F_i1_{ \lambda})  1_{\lambda'}=  \delta_{\lambda, \lambda'} F_i1_{ \lambda}, \quad
1_{\lambda'}  (F_i1_{\lambda})=  \delta_{\lambda', \lambda- i'} F_i1_{\lambda},
\vspace{6pt}\label{eq:modified F rel}\\
 & (E_iF_j-\pi^{p(i)p(j)}F_jE_i)1_{\lambda}=\delta_{ij}\bra{\langle i, \lambda\rangle}_{v_i,\pi_i}1_\lambda,
 \vspace{6pt}
 \label{eq:modified comm rel}\\
 & \sum_{n+n'=1-\ang{i,j'}}(-1)^{n'}\pi_i^{n'p(j)+\binom{n'}{2}} E_i^{(n)} E_jE_i^{(n')} 1_{\lambda}=0
 \;\; (i\neq j),  
 \vspace{6pt}
   \label{eq:modified E Serre}\\
 &\sum_{n+n'=1-\ang{i,j'}}(-1)^{n'}\pi_i^{n'p(j)+\binom{n'}{2}} F_i^{(n)}F_jF_i^{(n')} 1_{\lambda}=0
  \;\;  (i\neq j),  
   \label{eq:modified F Serre}
\end{eqnarray*}
where $i,j \in I$, $\la, \la'\in X$, and we use the notation $xy1_\lambda=(x1_{\lambda+|y|})(y1_\lambda)$
for $x,y\in \UU$.

The modified quantum covering group $\dot{\bold{U}}$ admits an ${\A}^{\pi}$-form, ${}_{{\A}}\dot{\bold{U}}$ and so we can define ${}_{R}\dot{\bold{U}} = R^{\pi} \otimes_{{\A}^{\pi}} {}_{{\A}}\dot{\bold{U}}$. Let us give a presentation for ${}_{R}\dot{\bold{U}}$. 
\begin{lem}
\label{lem:31.1.3}
The modified quantum covering group ${}_{R}\dot{\bold{U}}$ is generated as an $R^{\pi}$-algebra by $x^{+}\bold{1}_{\lambda}x'^{-}$ or equivalently by $x^{-}\bold{1}_{\lambda}x'^{+}$, where $x \in \fR_{\mu}, x' \in \fR_{\nu}$ and $\lambda \in X$, subject to the following relations:
\begin{align*}
(\theta_{i}^{(N)})^{+}& \bold{1}_{\lambda}(\theta_{i}^{(M)})^{-} 
\\
=& \sum_{t \geq 0} \pi_{i}^{MN - \binom{t + 1}{2}}(\theta_{i}^{(M-t)})^{-}\bbinom{M + N + \langle i, \lambda \rangle}{t}_{\qq_{i}, \pi_{i}} \bold{1}_{\lambda +(M+N-t)i'}(\theta_{i}^{(N-t)})^{+},
\\
(\theta_{i}^{(N)})^{-}& \bold{1}_{\lambda}(\theta_{i}^{(M)})^{+} 
\\
=& \sum_{t \geq 0} \pi_{i}^{MN + t\langle i, \lambda \rangle - \binom{t}{2}}(\theta_{i}^{(M-t)})^{+}\bbinom{M + N - \langle i, \lambda \rangle}{t}_{\qq_{i}, \pi_{i}} \bold{1}_{\lambda -(M+N-t)i'}(\theta_{i}^{(N-t)})^{-},
\end{align*}
\[
(\theta_{i}^{(N)})^{+}(\theta_{j}^{(M)})^{-}\bold{1}_{\lambda} = \pi^{MNp(i)p(j)}(\theta_{j}^{(M)})^{-}(\theta_{i}^{(N)})^{+}\bold{1}_{\lambda}, \text{ for } i \neq j,
\]
\[
x^{+}\bold{1}_{\lambda} = \bold{1}_{\lambda + \mu}x^{+}, \quad x^{-}\bold{1}_{\lambda} = \bold{1}_{\lambda - \mu}x^{-},
\]
\[
(x^{+}\bold{1}_{\lambda})(\bold{1}_{\lambda'}x'^{-}) = \delta_{\lambda,\lambda'}x^{+}\bold{1}_{\lambda}x'^{-}, \quad (x^{-}\bold{1}_{\lambda})(\bold{1}_{\lambda'}x'^{+}) = \delta_{\lambda,\lambda'}x^{-}\bold{1}_{\lambda}x'^{+},
\]
\[
(x^{+}\bold{1}_{\lambda})(\bold{1}_{\lambda'}x'^{-}) = \delta_{\lambda,\lambda'}\bold{1}_{\lambda + \mu}x^{+}x'^{-}, \quad (x^{-}\bold{1}_{\lambda})(\bold{1}_{\lambda'}x'^{+}) = \delta_{\lambda,\lambda'}\bold{1}_{\lambda - \mu}x^{-}x'^{+},
\]
\[
(rx + r'x')^{\pm}\bold{1}_{\lambda} = r{x}^{\pm}\bold{1}_{\lambda} + r'x'^{\pm}\bold{1}_{\lambda}, \text{ where } r, r' \in R^{\pi}.
\]
\end{lem}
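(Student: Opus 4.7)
The plan is to follow the strategy of \cite[\S 31.1.3]{Lu94}, adapted to the quantum covering group setting. Let $A$ denote the $R^\pi$-algebra defined by the stated generators and relations. The goal is to produce mutually inverse $R^\pi$-algebra homomorphisms between $A$ and ${}_R\dot{\bold{U}}$.

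First I would check that all the listed relations hold inside ${}_R\dot{\bold{U}}$ itself, which yields a surjective $R^\pi$-algebra homomorphism $\Phi \colon A \twoheadrightarrow {}_R\dot{\bold{U}}$. The bottom five families of relations (the $\bold{1}_\lambda$-commutation rules for $x^\pm$, the composition laws for bi-idempotent products, and $R^\pi$-linearity) are immediate consequences of the definition of the modified form and its $X$-grading. The supercommutation
\[
(\theta_i^{(N)})^+(\theta_j^{(M)})^-\bold{1}_\lambda = \pi^{MN p(i)p(j)}(\theta_j^{(M)})^-(\theta_i^{(N)})^+\bold{1}_\lambda, \qquad i\neq j,
\]
follows by an easy induction on $M+N$ from relation~(d) of \S\ref{subsec:data}. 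The two main divided-power commutation formulas are $\pi$-deformed analogues of Lusztig's identities; I would prove them by induction on $M+N$, starting from the case $M=N=1$ (which is the Cartan-type relation in the modified form) and using Pascal-type identities for $(q,\pi)$-binomial coefficients established in \cite{CHW1}.

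Next, to show that $\Phi$ is injective, I would compare a spanning set of $A$ with a basis of ${}_R\dot{\bold{U}}$. The first two commutation relations, together with the supercommutation for $i\neq j$, allow any element of $A$ to be rewritten as an $R^\pi$-linear combination of normal-form monomials $x^-\bold{1}_\lambda x'^+$ with $x\in\fR_\nu$, $x'\in\fR_\mu$, $\lambda\in X$. On the other hand, the triangular decomposition
\[
{}_R\dot{\bold{U}} = \bigoplus_{\lambda\in X} \fR \otimes \bold{1}_\lambda \otimes \fR
\]
(as $R^\pi$-modules), inherited from the PBW-type basis of ${}_R\bold{U}$, provides a basis indexed by precisely such triples. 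Hence $\Phi$ sends the spanning set of $A$ onto this basis, which forces $\Phi$ to be injective as well. The equivalent generation by $x^-\bold{1}_\lambda x'^+$ then follows symmetrically from the second commutation formula.

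The main obstacle is the careful bookkeeping of the $\pi$-exponents $\pi_i^{MN-\binom{t+1}{2}}$ and $\pi_i^{MN+t\langle i,\lambda\rangle-\binom{t}{2}}$ in the divided-power commutation formulas. Each commutation of an $F_i$ past an $E_i$ with $i\in I_1$ contributes a factor of $\pi_i$, and the weight shift $\lambda\mapsto \lambda+(M+N-t)i'$ inside the idempotent must align with the $(q,\pi)$-binomial argument $M+N+\langle i,\lambda\rangle$ at every inductive step. To reduce this combinatorial overhead, one may carry out the induction inside a generic Verma module ${}_R M(\lambda)$ realized on $\fR$, so that the identity need only be verified on a single weight vector, as in the analogous argument in \cite{Lu94}.
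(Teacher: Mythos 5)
Your proposal is correct and follows essentially the same route as the paper: define the abstract algebra $A$ on these generators and relations, verify that the relations hold in ${}_{R}\dot{\bold{U}}$ to obtain a surjection, and then match the normal-form spanning set of $A$ against the known basis of ${}_{R}\dot{\bold{U}}$ given by $x^{+}\bold{1}_{\lambda}x'^{-}$ with $x,x'$ running over a weight basis of $\fR$ (cf.\ \cite[Lemma 5]{Cl14}) to conclude injectivity. The only difference is one of detail: where you would re-derive the two divided-power commutation formulas by induction on $M+N$ (or inside a generic Verma module), the paper obtains them directly from the commutation formula \cite[Lemma 2.2.3]{CHW1}, correcting the $\pi$-exponent in the second relation of \cite[Lemma 4]{Cl14} --- precisely the bookkeeping you identify as the main obstacle.
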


\begin{proof}
This is proved in the same way as \cite[\S 31.1.3]{Lu94}. Let $A$ be the $R^{\pi}$-algebra with the above generators and relations.  All of these relations are known to hold in ${}_{R}\dot{\bold{U}}$.  The first three are shown to hold in ${}_{R}\dot{\bold{U}}$ by a direct application of \cite[Lemma 2.2.3]{CHW1} as in \cite[Lemma 4]{Cl14} while the remaining ones are clear. However, there was an error in the second relation of \cite[Lemma 4]{Cl14}, so we derive that relation from \cite[Lemma~ 2.2.3]{CHW1} here. We have
\begin{align*}
& (\theta_{i}^{(N)})^{-} \bold{1}_{\lambda}(\theta_{i}^{(M)})^{+} 
\\
&= (\theta_{i}^{(N)})^{-}(\theta_{i}^{(M)})^{+}\bold{1}_{\lambda - Mi'} \\
&= \sum_{t \geq 0} (-1)^{t}\pi_{i}^{(M-t)(N-t) - t^{2}}(\theta_{i}^{(M-t)})^{+}\bbinom{\tilde{K}_{i};  M + N - (t + 1)}{t}_{\qq_{i}, \pi_{i}}(\theta_{i}^{(N-t)})^{-}\bold{1}_{\lambda - Mi'} \\
&= \sum_{t \geq 0} (-1)^{t}\pi_{i}^{(M-t)(N-t) - t^{2}}(\theta_{i}^{(M-t)})^{+}\bbinom{\langle i, \lambda \rangle - M - N + t -1}{t}_{\qq_{i}, \pi_{i}} \bold{1}_{\lambda -(M+N-t)i'}(\theta_{i}^{(N-t)})^{-} \\
&=  \sum_{t \geq 0} \pi_{i}^{MN + t\langle i, \lambda \rangle - \binom{t}{2}}(\theta_{i}^{(M-t)})^{+}\bbinom{M + N - \langle i, \lambda \rangle}{t}_{\qq_{i}, \pi_{i}} \bold{1}_{\lambda -(M+N-t)i'}(\theta_{i}^{(N-t)})^{-}
\end{align*} 
where in the last step, we used \cite[(1.10)]{CHW1} with $a = M + N - \langle i, \lambda \rangle$.
Hence the natural homomorphism $A \longrightarrow {}_{R}\dot{\bold{U}}$ is surjective. Let $\bold{S}$ be an $R^{\pi}$-basis of $\fR$ consisting of weight vectors.  Then $  \{ x^{+}\bold{1}_{\lambda}x'^{-} | x,x' \in \bold{S}, \lambda \in X \} $ can be seen to be an $R^{\pi}$-basis for $A$, and it is known to be one for ${}_{R}\dot{\bold{U}}$ (cf. \cite[Lemma 5]{Cl14}).  Thus, the natural homomorphism is, in fact, an isomorphism.
\end{proof}

\subsection{}

The algebra $\dot{\bold{U}}^{\diamond}$ is defined in the same way using $\bold{U}^{\diamond}$ and $(Y^{\diamond}, X^{\diamond}, . . . ),$ and so it also has an ${\A}^{\pi}$-form ${}_{{\A}}\dot{\bold{U}}^\diamond $ and we can define ${}_{R}\dot{\bold{U}}^\diamond = R^{\pi} \otimes_{{\A}^{\pi}} {}_{{\A}}\dot{\bold{U}}^\diamond $.

\begin{rem}  \label{rem:parityff}
If $\ell$ is even, then $\fRd$ is a (non-super) algebra; if $\ell$ is odd,
then the $\theta_i$ in $\fRd$ and $\fR$ for any given $i$ have the same parity. 
\end{rem}

For $i\in I$, we denote 
\begin{equation}
  \label{eq:diamond}
q_i^\diamond = q^{i\diamond i/2} = q_i^{\ell_i^2},
\qquad
\qq_i^\diamond = \qq^{i\diamond i/2} = \qq_i^{\ell_i^2},
\qquad
\pi_i^\diamond = \pi^{i\diamond i/2} = \pi_i^{\ell_i^2}.
\end{equation}

\begin{lem}
 \label{lem:parity}
Let $i\in I_1$.  
\begin{enumerate}
\item[(a)]
If $\ell$ is odd, then  $\pi_i^\diamond =\pi_i$.

\item[(b)]
If $\ell$ is even, then   $\pi_i^\diamond =1$.
\end{enumerate}
\end{lem}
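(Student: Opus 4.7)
The plan is to derive both parts directly from the definition of $\pi_i^\diamond$ in \eqref{eq:diamond}, combined with the bar-consistent condition (e) of \S\ref{subsec:data} and Lemma~\ref{lem:parity1}.

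First, I would observe that for $i \in I_1$, the bar-consistent condition gives $i\cdot i/2 \equiv p(i) = 1 \pmod{2}$; since $\pi^2 = 1$, this forces
\[
\pi_i = \pi^{i\cdot i/2} = \pi.
\]
Next, by \eqref{eq:diamond}, we have $\pi_i^\diamond = \pi_i^{\ell_i^2} = \pi^{\ell_i^2}$, so the parity of $\ell_i^2$ (equivalently, the parity of $\ell_i$) determines the answer.

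Now I would invoke Lemma~\ref{lem:parity1}, which asserts that $\ell_i$ has the same parity as $\ell$ for $i \in I_1$. In case (a), $\ell$ is odd, hence $\ell_i$ is odd, hence $\ell_i^2$ is odd, so $\pi_i^\diamond = \pi = \pi_i$. In case (b), $\ell$ is even, hence $\ell_i$ is even, hence $\ell_i^2$ is even, so $\pi_i^\diamond = \pi^{\ell_i^2} = 1$.

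This is a short bookkeeping argument and I do not anticipate any genuine obstacle; the only thing to be careful about is to use the bar-consistent condition to identify $\pi_i$ with $\pi$ before raising to the $\ell_i^2$ power, so that the computation reduces entirely to tracking the parity of $\ell_i$ via Lemma~\ref{lem:parity1}.
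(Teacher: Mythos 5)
Your proof is correct and follows essentially the same route as the paper: read off $\pi_i^\diamond=\pi_i^{\ell_i^2}$ from \eqref{eq:diamond} and use Lemma~\ref{lem:parity1} to track the parity of $\ell_i$. The intermediate identification $\pi_i=\pi$ via bar-consistency is harmless but not needed, since $\pi_i^2=1$ already makes the exponent's parity the only thing that matters.
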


\begin{proof}
Recall from Lemma~\ref{lem:parity1} that $\ell_i$ must have the same parity as $\ell$. 
The claim on $\pi_i^\diamond$ follows now from \eqref{eq:diamond}. 
\end{proof}

For each $i\in I$, we have 
\begin{equation}
  \label{eq:qc}
\pi_i^\diamond \qq_i^{\diamond 2}= (\pi_i \qq_i^2)^{\ell_i^2} =1.
\end{equation} 
Following Lusztig \cite{Lu94}, we will refer to the quantum supergroup ${}_{R}\fd$ associated to
 $(Y^\diamond, X^\diamond, \cdots)$ as {\em quasi-classical}; cf. \eqref{eq:qc}.

\begin{prop}\label{quasi}
Let $R$ be the fraction field of ${\A}'$. The quasi-classical algebra $\fRd$ is isomorphic to $\tilde{\fRd}$, the $R^\pi$-algebra generated by $\theta_{i}$, $i \in I$, subject to the super Serre relations:
\[
\sum_{n+n'=1-\ang{i,j'}^\diamond}(-1)^{n'}(\pi_i^\diamond)^{np(j)+\binom{n}{2}}
\theta_i^{(n)}\theta_j\theta_i^{(n')}=0  \qquad (i\neq j \in I).
\]
\end{prop}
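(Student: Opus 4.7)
The plan is as follows. Since $R$ is the fraction field of the cyclotomic ring $\A'=\Z[q,q^{-1}]/\langle f(q)\rangle$, $R$ has characteristic zero, and hence every positive integer is invertible in $R^\pi$. Combining this with the quasi-classical identity $\pi_i^\diamond\qq_i^{\diamond\,2}=1$ from \eqref{eq:qc}, one collapses the divided powers in $\fRd$ to polynomial expressions in the $\theta_i$'s, and then identifies the remaining relations with the super Serre relations in the statement.

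Using the expansion $[n]_{q,\pi}=q^{-(n-1)}\sum_{k=0}^{n-1}(\pi q^2)^k$ and specializing to $\pi=\pi_i^\diamond$, $q=\qq_i^\diamond$, so that $\pi q^2=1$, one computes
\[
[n]_{\qq_i^\diamond,\pi_i^\diamond}^{\,!}=(\qq_i^\diamond)^{-n(n-1)/2}\,n!,
\]
which is a unit in $R^\pi$. Consequently $\theta_i^{(n)}=(\qq_i^\diamond)^{n(n-1)/2}\theta_i^n/n!$ in $\fRd$, so $\fRd$ is generated as an $R^\pi$-algebra by $\{\theta_i\mid i\in I\}$. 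I would then define $\phi\colon\tilde{\fRd}\to\fRd$ by $\theta_i\mapsto\theta_i$; this is well-defined because the super Serre relations appearing in the proposition are among the relations inherited by $\fRd$ from the definition of $\fd$ (using Lemma~\ref{lem:parity} to interpret $\pi_i^\diamond$ correctly in the even and odd $\ell$ cases), and it is surjective by the preceding sentence.

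For the inverse, I would use the integral presentation $\fRd=R^\pi\otimes_{\A^\pi}\fA^\diamond$ and send each divided-power generator $\theta_i^{(n)}\in\fA^\diamond$ to $(\qq_i^\diamond)^{n(n-1)/2}\theta_i^n/n!\in\tilde{\fRd}$. This descends to $\psi\colon\fRd\to\tilde{\fRd}$ once one checks that the defining relations of $\fA^\diamond$ hold in $\tilde{\fRd}$: the divided-power identities $[n]_{\qq_i^\diamond,\pi_i^\diamond}^{\,!}\theta_i^{(n)}=\theta_i^n$ are tautological under the convention $\theta_i^{(n)}:=\theta_i^n/[n]^!$, and the super Serre relations on divided powers are precisely the presenting relations of $\tilde{\fRd}$. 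Both compositions $\phi\psi$ and $\psi\phi$ act as the identity on the generators, so $\phi$ is the desired isomorphism.

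The main obstacle I expect is verifying that no extra relations arise in $\fRd$ beyond super Serre under the specialization $q\mapsto\qq$. This is ensured by the fact that $\fA^\diamond$ is a free $\A^\pi$-module (via the PBW-type/canonical basis from \cite{CHW1, CHW2}), so $\fRd=R^\pi\otimes_{\A^\pi}\fA^\diamond$ remains free with the same basis and no new relations emerge from the specialization. The residual bookkeeping consists in matching the form $\pi_i^{n'p(j)+\binom{n'}{2}}$ coming from $\fd$ with the form $(\pi_i^\diamond)^{np(j)+\binom{n}{2}}$ appearing in the proposition, which are equivalent after using the constraint $n+n'=1-\ang{i,j'}^\diamond$ and Lemma~\ref{lem:parity}.
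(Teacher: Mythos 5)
Your first half is fine and agrees with the paper: since $R$ has characteristic zero and $\pi_i^\diamond\qq_i^{\diamond 2}=1$, the factorial $[n]_{\qq_i^\diamond,\pi_i^\diamond}^{!}=(\pi_i\qq_i)^{\ell_i^2\binom{n}{2}}n!$ is a unit, so $\fRd$ is generated by the $\theta_i$, the super Serre relations hold in it, and one gets a surjection $\varphi:\tilde{\fRd}\to\fRd$. The gap is in the inverse direction, and it occurs twice in your write-up. First, the map $\psi$ cannot be ``descended'' by checking ``the defining relations of $\fA^\diamond$'': the integral form $\fA^\diamond$ is \emph{defined} as the $\A^\pi$-subalgebra of $\fd$ generated by the divided powers, not by any presentation, and asserting that its relations are exhausted by the divided-power conventions together with the Serre relations is exactly the statement you are trying to prove, so this step is circular. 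Second, your fallback argument addresses the wrong algebra: freeness of $\fA^\diamond$ over $\A^\pi$ (canonical basis) only controls $\dim_{R^\pi}\fRd_\nu$, showing no collapse occurs in $\fRd$ under specialization; it gives no upper bound on $\dim_{R^\pi}\tilde{\fRd}_\nu$, which is what injectivity of $\varphi$ requires. A priori the Serre-presented algebra at the quasi-classical specialization could be strictly larger than $\fRd$, and ruling this out is the genuinely nontrivial input.

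That input is a dimension comparison, and it is where the paper does its real work: for $\pi=1$ or $\ell$ even one quotes Lusztig [33.2]; for $\pi=-1$ and $\ell$ odd the paper uses the twistor $\dot{\Psi}$ of \cite{CFLW} to identify the weight-space dimensions of $\fRd$ with those of Lusztig's quasi-classical algebra (hence, by [33.2.2], with those of the classical half enveloping algebra), and then invokes \cite{KKO14, CHW2} to know that the Serre-presented superalgebra $\tilde{\fRd}$ has those same graded dimensions, so the surjection $\varphi$ is forced to be an isomorphism weight space by weight space. Your freeness observation could legitimately replace the twistor step (it computes $\dim_{R^\pi}\fRd_\nu$ as the generic graded dimension, which is the classical one), but without the super Gabber--Kac-type statement bounding $\dim_{R^\pi}\tilde{\fRd}_\nu$ --- or some substitute for it --- the proof is incomplete. (Your final bookkeeping remark about the exponents $\pi_i^{n'p(j)+\binom{n'}{2}}$ versus $(\pi_i^\diamond)^{np(j)+\binom{n}{2}}$ is correct but peripheral.)
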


\begin{proof}
When $\pi_i = 1$ or $\ell$ is even, $\pi_{i}^{\diamond} = 1$ and $\qq_{i}^{\diamond} = \pm 1$ for each $i \in I$.  Hence, in this case the lemma reduces to \cite[\S 33.2]{Lu94}.

Now let $\ell$ be odd and $\pi=-1$.  We make use of the {\em weight-preserving} automorphism $\dot{\Psi}$ of $_{R}\dot{\bold{U}}^{\diamond}$ (called a twistor) given in \cite[Theorem 4.3]{CFLW} when the base ring contains $\sqrt{-1}.$ 
We will only recall the basic property of $\dot{\Psi}$ which we need, and refer to \cite{CFLW} for details. 
Note that for all $i \in I$, $\qq_{i}^{\diamond}$ is a power of $\sqrt{-1}$ with at least one of the $\qq_{i}^{\diamond} = \pm \sqrt{-1}$. Thus, $\pm \sqrt{-1}$ will play the role played by the $v$ in \cite[Theorem 4.3]{CFLW}, which we will denote by $\tilde{v}$ in this proof so as not to confuse it with the $v$ defined in this paper.  Recall $\dot{\Psi}$ takes $\pi$ to $-\pi$ and $\tilde{v}$ to $\sqrt{-1}\tilde{v}$. When we specialize $\pi = -1$ and $\tilde{v} = \pm \sqrt{-1}$, we obtain an $R$-linear isomorphism of that specialization of $_{R}\dot{\bold{U}}^{\diamond}$, denoted by $_{R}\dot{\bold{U}}^{\diamond}|_{-1}$, with the (quasi-classical) modified quantum group corresponding to the specialization $\pi = 1$ and $\qq_{j}^{\diamond}= \pm1$, denoted by $_{R}\dot{\bold{U}}^{\diamond}|_{1}.$ 

Write 

$\triangleright$ ${}_{R_{-1}}\bold{f}$ for the half quantum (super)group over $R$ corresponding to the former (i.e., $\pi=-1$);

$\triangleright$ ${}_{R_{1}}\bold{f}^{\diamond}$ for the half (quasi-classical) quantum group over $R$ corresponding to the latter (i.e., $\pi=1$); cf. \cite[33.2]{Lu94}. 

Recall that $\fRd$ is a direct sum of finite-dimensional weight spaces ${}_R \bold{f}^{\diamond}_{\nu}$, where $\nu \in \mathbb{Z}_{\geq0}[I]$. The weight-preserving isomorphism $\dot{\Psi}$ above implies that 
\[
\dim_{R^\pi} ({}_R \bold{f}^{\diamond}_{\nu}) = \dim_{R} ({}_{R_{-1}} \bold{f}^{\diamond}_{\nu}) =\dim_R {}_{R_{1}} \bold{f}^{\diamond}_{\nu}, \quad \forall \nu.
\] 
As ${}_{R_{1}}\bold{f}^{\diamond}$ is quasi-classical in the sense of \cite[33.2]{Lu94}, we have $\dim_R {}_{R_{1}} \bold{f}^{\diamond}_{\nu}=\dim_R {}_{R_{1}} \bold{f}_{\nu}$ for all $\nu$, by \cite[33.2.2]{Lu94}, where ${}_{R_{1}}\bold{f}$ is the enveloping algebra of the half KM algebra over $R$. Hence we have
\begin{align}  \label{eq:sameD}
\dim_{R^\pi} ({}_R \bold{f}^{\diamond}_{\nu})  = \dim_{R} ({}_{R_{1}} \bold{f}_{\nu}), \quad \forall \nu.
\end{align}

Since the super Serre relations hold in $\fRd$ (cf. \cite[Proposition 1.7.3]{CHW1}) we have a surjective algebra homomorphism $\varphi: \tilde{\fRd} \longrightarrow \fRd$ mapping $\theta_{i} \mapsto \theta_{i}$ for all $i$. Then $\varphi$ maps each weight space ${}_R \bold{\tilde{f}}^{\diamond}_{\nu}$ onto the corresponding weight space ${}_R \bold{f}^{\diamond}_{\nu}$. As $\tilde{\fRd}$ has a Serre-type presentation by definition, it follows by \cite{KKO14, CHW2} that $\dim_{R^\pi} ({}_R \bold{\tilde{f}}_{\nu}) = \dim_{R} ({}_{R_{1}} \bold{f}_{\nu})$
 for each $\nu$. This together with \eqref{eq:sameD} implies that $\dim_{R^\pi} ({}_R \bold{\tilde{f}}_{\nu}) =\dim_{R^\pi} ({}_R \bold{f}^{\diamond}_{\nu})$. 
Therefore $\varphi$ is a linear isomorphism on each weight space and thus an isomorphism.
\end{proof}

\subsection{} Below we provide an analogue of \cite[35.1.5]{Lu94}. 

\begin{lem}
\label{lem:35.1.5}
Assume that both $n\in \Z$ and $t \in \N$ are divisible by $\ell_i$. Then
$$
\bbinom{n}{t}_{\qq_i,\pi_i} = \bbinom{n/\ell_i}{t/\ell_i}_{\qq_i^\diamond, \pi_i^\diamond}.
$$
\end{lem}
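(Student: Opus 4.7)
The plan is to reduce both sides of the claimed identity to the $\vv$-variable via \eqref{eq:vq} and then invoke Lusztig's \cite[Lemma 35.1.5]{Lu94}, mirroring the strategy used throughout Section~\ref{sec:binom}. Writing $n=\ell_i n_1$ and $t=\ell_i t_1$, I set $\sq_i:=\sq^{d_i}$ and $\vv_i:=\sq_i\qq_i=\vv^{d_i}$, and analogously define $\sq_i^\diamond:=\sq_i^{\ell_i^2}$ and $\vv_i^\diamond:=\sq_i^\diamond\qq_i^\diamond$. By \eqref{eq:diamond} one checks $(\sq_i^\diamond)^2=\pi_i^{\ell_i^2}=\pi_i^\diamond$ and $\vv_i^\diamond=\vv_i^{\ell_i^2}$, so these are consistent with the definitions already in play.

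Applying \eqref{eq:vq} to both sides of the desired identity, the lemma becomes equivalent to
\[
\sq_i^{\ell_i^2(n_1-t_1)t_1}\bbinom{\ell_i n_1}{\ell_i t_1}_{\vv_i}
 = (\sq_i^\diamond)^{(n_1-t_1)t_1}\bbinom{n_1}{t_1}_{\vv_i^{\ell_i^2}}.
\]
The $\sq$-prefactors on the two sides coincide automatically, since $\sq_i^{\ell_i^2}=\sq_i^\diamond$. Hence the lemma reduces to the purely $v$-theoretic statement
\[
\bbinom{\ell_i n_1}{\ell_i t_1}_{\vv_i}=\bbinom{n_1}{t_1}_{\vv_i^{\ell_i^2}},
\]
which is exactly Lusztig's \cite[Lemma 35.1.5]{Lu94} applied with $v:=\vv_i$. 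The hypotheses of that lemma are met: $\vv^{2\ell}=1$ together with the defining property $\ell_i=\min\{r\in\Z_{>0}\mid rd_i\in\ell\Z\}$ ensures $\vv_i^{2\ell_i}=1$ with $\vv_i^{2t}\neq 1$ for $0<t<\ell_i$, which is precisely the level-$i$ analogue of \eqref{eq:order}.

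The main obstacle is simply the bookkeeping of $\sq$-powers and verifying that $\vv_i$ satisfies the correct root-of-unity hypothesis needed to invoke Lusztig's lemma; there is no deeper conceptual difficulty, and the result will follow by the same $\pi$-to-$v$ substitution already used in Lemmas~\ref{lem:34.1.2}, \ref{lem:34.1.3} and \ref{lem:34.1.4}.
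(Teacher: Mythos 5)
Your argument is correct, but it reaches the identity by a different route than the paper. You convert both sides to $v$-binomials via \eqref{eq:vq} (choosing $\sq_i=\sq^{d_i}$, $\sq_i^\diamond=\sq_i^{\ell_i^2}$ so that the $\sq$-prefactors $\sq_i^{\ell_i^2(n_1-t_1)t_1}$ cancel and $\vv_i^\diamond=\vv_i^{\ell_i^2}$), and then quote Lusztig's \cite[35.1.5]{Lu94} for the resulting statement $\bbinom{\ell_i n_1}{\ell_i t_1}_{\vv_i}=\bbinom{n_1}{t_1}_{\vv_i^{\ell_i^2}}$; your verification of the hypotheses is right, since $\vv_i^{2t}=(\vv^2)^{td_i}$ together with the full condition \eqref{eq:order} (you need both parts of \eqref{eq:order}, not just $\vv^{2\ell}=1$) and the definition of $\ell_i$ gives $\vv_i^{2\ell_i}=1$ and $\vv_i^{2t}\neq1$ for $0<t<\ell_i$, exactly as the paper records just before Lemma~\ref{lem:fRgen}. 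The paper instead stays entirely within its own $\pi$-enhanced statements: it applies Lemma~\ref{lem:34.1.2}(b) twice -- once with $\ell_i$ in the role of $\ell$ to the left-hand side, and once with $\ell=1$ (using $(\pi_i^\diamond\qq_i^{\diamond 2})^2=1$) to the right-hand side -- so that both sides are seen to equal the same explicit expression $\pi_i^{t(n-(t-\ell_i)/2)}\qq_i^{t(n+\ell_i)}\binom{n/\ell_i}{t/\ell_i}$. Your route buys brevity and mirrors the conversion technique already used for Lemmas~\ref{lem:34.1.2}--\ref{lem:34.1.4}, at the cost of introducing the auxiliary square roots $\sq_i,\sq_i^\diamond$ and implicitly re-reading Lusztig's lemma in a rank-one form with $\vv_i$ as the root of unity (harmless, since his assumptions in \cite[35.1.2]{Lu94} play no role there); the paper's route avoids any new choices of square roots and keeps the computation self-contained modulo its own Lemma~\ref{lem:34.1.2}(b). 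One cosmetic point: \eqref{eq:vq} is stated for $n,t\in\N$, while here $n\in\Z$, so you should note (as the paper implicitly does in Lemma~\ref{lem:34.1.2}(c)) that the identity $\bbinom{n}{t}_{q,\pi}=\sq^{(n-t)t}\bbinom{n}{t}_v$ holds verbatim for all $n\in\Z$.
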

(Setting $\pi=1$ in the above formula recovers \cite[35.1.5]{Lu94}.)

\begin{proof}
By Lemma~\ref{lem:34.1.2}(b), we have
$$
\bbinom{n}{t}_{\qq_i,\pi_i} = \pi_i^{t (n-(t-\ell_i)/2)} \qq_i^{t(n+\ell_i)} \binom{n/\ell_i}{t/\ell_i}.
$$
Note that $\pi_i^\diamond {\qq_i^\diamond}^2 = (\pi \qq^2)^{\frac{i\cdot i}2 \ell_i^2}$.
Since $ (\pi \qq^2)^{2\ell} =1$ and $\ell$ divides $\frac{i\cdot i}2 \ell_i^2$  by 
the definition of $\ell_i$, we have $(\pi_i^\diamond {\qq_i^\diamond}^2)^2 =1$.
Hence by \eqref{eq:diamond} and Lemma~\ref{lem:34.1.2}(b) with $\ell=1$ we have
$$
\bbinom{n/\ell_i}{t/\ell_i}_{\qq_i^\diamond, \pi_i^\diamond} 
=\pi_i^{t (n-(t-\ell_i)/2)} \qq_i^{t(n+\ell_i)} \binom{n/\ell_i}{t/\ell_i}.
$$
The lemma follows.
\end{proof}

\section{The Frobenius-Lusztig homomorphism}
  \label{sec:Frob}

In this section we establish the Frobenius-Lusztig homomorphism between the quasi-classical covering group and the quantum covering group at roots of 1. We also formulate Lusztig-Steinberg tensor product theorem in this setting. 

\subsection{}
\label{subsec:ab}

Following \cite[35.1.2]{Lu94}, in this and following sections we shall assume 
\begin{enumerate}
\item[(a)] for any $i\neq j \in I$ with $\ell_j\ge 2$, we have $\ell_i \geq -\langle i,j' \rangle +1$. 

\item[(b)]  $(I,\cdot)$ has no odd cycles. 
\end{enumerate}

\subsection{}
Below is a generalization of \cite[Theorem~35.1.8]{Lu94}.

\begin{thm}
 \label{th:Frob'}
There is a unique $R^\pi$-superalgebra homomorphism 
$$
\Fr': \fRd \longrightarrow \fR,
\qquad\quad   
\Fr' (\theta_i^{(n)}) = \theta_i^{(n\ell_i)} \quad (\forall i\in I, n\in \Z_{>0}).
$$ 
\end{thm}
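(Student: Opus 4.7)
The plan is to invoke Proposition~\ref{quasi}, which shows that $\fRd$ has a Serre-type presentation with generators $\theta_i$ $(i\in I)$ subject to the $\diamond$-super Serre relations. Hence uniqueness of $\Fr'$ is automatic, and existence reduces to exhibiting elements $\theta_i^{(\ell_i)} \in \fR$ of the correct parity that satisfy those Serre relations. For the parity check, Lemma~\ref{lem:parity1} gives that for $i\in I_1$ the integer $\ell_i$ has the same parity as $\ell$: if $\ell$ is odd, $\theta_i^{(\ell_i)}$ has parity $p(i)$, matching the parity of $\theta_i\in\fRd$ by Remark~\ref{rem:parityff}; if $\ell$ is even, $\theta_i^{(\ell_i)}$ is even, matching the non-super nature of $\fRd$.

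The main step is to verify, for each $i\neq j$ in $I$, the ``$\ell$-thickened'' Serre identity
\[
\sum_{n+n'=1-\ang{i,j'}^\diamond} (-1)^{n'}(\pi_i^\diamond)^{np(j)+\binom{n}{2}}\, \theta_i^{(n\ell_i)}\theta_j^{(\ell_j)}\theta_i^{(n'\ell_i)} = 0 \quad \text{in } \fR.
\]
Here I would follow the strategy of \cite[Chapter~35]{Lu94}, adapted to the $(q,\pi)$-setting. Using induction on $1-\ang{i,j'}$, or equivalently by evaluating each term against an extremal-weight vector of a suitable Verma module ${}_R M(\lambda)$, one expands the products via the higher commutation formulas in $\fR$. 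Three ingredients then cause the alternating sum to collapse: (a) Lemma~\ref{lem:34.1.2}(a) forces most $(\qq_i,\pi_i)$-binomials appearing in the expansion to vanish at $q=\qq$; (b) Lemma~\ref{lem:35.1.5} converts the surviving binomials $\bbinom{n\ell_i}{t\ell_i}_{\qq_i,\pi_i}$ into their $\diamond$-counterparts $\bbinom{n}{t}_{\qq_i^\diamond,\pi_i^\diamond}$; and (c) Lemma~\ref{lem:34.1.4} collapses the remaining alternating sum. The assumptions of \S\ref{subsec:ab} (namely $\ell_i \ge -\ang{i,j'}+1$ whenever $\ell_j\ge 2$, together with the absence of odd cycles) are invoked precisely to keep all indices inside the ranges where these lemmas apply, and the bar-consistency condition together with Lemma~\ref{lem:parity} tracks the $\pi$-twists so as to produce the correct $\pi_i^\diamond$ in each surviving coefficient.

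Once the generator-level Serre relations are verified, the map extends uniquely to the integral form $\fRd$. The compatibility with divided powers, $\Fr'(\theta_i^{(n)}) = \theta_i^{(n\ell_i)}$, amounts to the identity
\[
(\theta_i^{(\ell_i)})^n \;=\; [n]_{\qq_i^\diamond,\pi_i^\diamond}^!\,\theta_i^{(n\ell_i)} \quad \text{in } \fR,
\]
which follows from the divided-power arithmetic $\theta_i^{(a\ell_i)}\theta_i^{(b\ell_i)} = \bbinom{(a+b)\ell_i}{a\ell_i}_{\qq_i,\pi_i}\theta_i^{((a+b)\ell_i)}$ combined with Lemma~\ref{lem:34.1.3} and the relations \eqref{eq:diamond}, \eqref{eq:qc}.

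The hard part will be the Serre verification of the displayed $\ell$-thickened identity: algebraically one must carefully track the $\pi$-signs through iterated super-commutations, and combinatorially one must exploit the root-of-unity vanishings of $(\qq,\pi)$-binomials so that the few surviving coefficients assemble exactly into the $\diamond$-super Serre identity. The parity and divided-power reductions are, by contrast, formal consequences of the lemmas already in place.
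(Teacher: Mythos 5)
Your overall skeleton matches the paper's: uniqueness is immediate, Proposition~\ref{quasi} supplies the presentation of $\fRd$, and the divided-power compatibility $\Fr'(\theta_i^{(n)})=\theta_i^{(n\ell_i)}$ follows from the arithmetic of Lemma~\ref{lem:34.1.3} and \eqref{eq:diamond}, exactly as in the paper (which also first reduces, following \cite[35.1.11]{Lu94}, to $R$ being the quotient field of $\A'$). But the heart of the theorem --- the verification of the $\ell$-thickened Serre identity \eqref{eq:serreFr'} --- is not actually proved in your proposal, and the mechanisms you name would not carry it out. There is no meaningful ``induction on $1-\ang{i,j'}$'' here, and evaluating against a highest weight vector of ${}_RM(\lambda)$ buys nothing: ${}_RM(\lambda)$ is identified with $\fR$ as a free module, so the module statement is literally equivalent to the algebra identity and provides no collapse mechanism. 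Likewise, the three lemmas you cite are not the relevant tools for this step: Lemma~\ref{lem:35.1.5} and Lemma~\ref{lem:34.1.4} are not used in the paper's proof of Theorem~\ref{th:Frob'} (they enter later, e.g.\ in Corollary~\ref{cor:35.3.3} and Theorem~\ref{th:Frob2}), and the vanishing that makes the computation work is not the one of Lemma~\ref{lem:34.1.2}(a).

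The missing idea is to work \emph{generically} and then specialize: inside $\fA$ one has the higher super Serre relations of \cite[(4.1), Proposition 4.2.4]{CHW1}, i.e.\ the elements $g_t$ (essentially $f'_{i,j;\ell_j,\ell_j\alpha+\ell_i-t}$ with $\alpha=-\ang{i,j'}$, $0\le t\le \ell_i-1$) vanish identically. One forms the weighted combination $g=\sum_{t=0}^{\ell_i-1}(-1)^t\pi_i^{\binom{t}{2}}q_i^{(\ell_j\alpha+\ell_i-1)t}\,g_t\,\tha_i^{(t)}=0$, rewrites it as $\sum_{r+s'=\ell_j\alpha+\ell_i}c_{r,s'}\,\tha_i^{(r)}\tha_j^{(\ell_j)}\tha_i^{(s')}$, and only then applies the specialization $\phi:\fA\to\fR$. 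The coefficients $\phi(c_{r,s'})$ are computed using Lemma~\ref{lem:34.1.2}(c), the identity $\sum_{t=0}^{a}(-1)^t\pi_i^{\binom{t}{2}}\qq_i^{t(a-1)}\bbinom{a}{t}_{\qq_i,\pi_i}=\delta_{a,0}$ from \cite[1.4.4]{CHW1}, and the $i$-version of \eqref{eq:34.1.2e}; this kills every term except those with $s'\in\ell_i\Z$, and the surviving coefficients assemble into \eqref{eq:serreFr'} after a final $\pi$-power comparison that uses Lemma~\ref{lem:parity1} (this is where bar-consistency really enters). Without this lift to $\fA$ and the higher super Serre relations there is no way to ``expand the products'' of the $\theta_i^{(\ell_i)}$'s inside $\fR$, so as written your argument has a genuine gap precisely at the step you yourself identify as the hard part.
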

(Be aware that the two $\theta_i$'s above belong to different algebras and hence are different.
Theorem~\ref{th:Frob'} is consistent with Remark~\ref{rem:parityff}.)

The rest of the section is devoted to a proof of Theorem~\ref{th:Frob'}. The same remark as in \cite[35.1.11]{Lu94}
allows us to reduce the proof to the case when $R$ is the quotient field of $\A'$, which we will assume
in the remainder of this and the next section. 

\subsection{}

Recall from \eqref{eq:order} that $\pi^\ell \qq^{2\ell}=1$ and $\pi^t \qq^{2t} \neq 1$ for $0<t<\ell$. By the definition of $\ell_i$, we have 
$\pi_i^\ell \qq_i^{2\ell}=1$ and $\pi_i^t \qq_i^{2t} \neq 1$ for $0<t<\ell_i$. Then $[t]_{\qq_i}^\pi!$ is invertible in $R^\pi$, for $0<t <\ell_i$.

The following is an analogue of \cite[Lemma~35.2.2]{Lu94} and the proof uses now Lemmas~\ref{lem:34.1.2} and \ref{lem:34.1.3}. 

\begin{lem}
  \label{lem:fRgen}
The $R^\pi$-superalgebra $\fR$ is generated by the elements $\theta_i^{(\ell_i)}$ for all $i\in I$
and the elements $\theta_i$ for $i\in I$ with $\ell_i \ge 2$.
\end{lem}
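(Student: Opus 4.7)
The plan is to show that every divided power $\theta_i^{(n)}$ with $i \in I$ and $n \geq 1$ lies in the subalgebra $A_i$ generated by $\theta_i^{(\ell_i)}$, and additionally by $\theta_i$ in case $\ell_i \geq 2$. Since $\fR$ is generated by all such $\theta_i^{(n)}$ over $R^\pi$, this will suffice. Given $n \geq 1$, write $n = a\ell_i + r$ with $a \geq 0$ and $0 \leq r < \ell_i$, and handle the two factors $\theta_i^{(r)}$ and $\theta_i^{(a\ell_i)}$ separately before combining them.

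For the remainder factor $\theta_i^{(r)}$: if $\ell_i = 1$ then $r=0$ and there is nothing to prove. If $\ell_i \geq 2$, then as noted in the paragraph preceding the lemma, $[t]_{\qq_i,\pi_i}^!$ is invertible in $R^\pi$ for each $0 < t < \ell_i$, so $\theta_i^{(t)} = \theta_i^{t}/[t]_{\qq_i,\pi_i}^!$ lies in the subalgebra generated by $\theta_i$. For the $\ell_i$-block factor $\theta_i^{(a\ell_i)}$: I apply Lemma~\ref{lem:34.1.3} with $\ell,\qq,\pi$ replaced throughout by $\ell_i,\qq_i,\pi_i$ (the proof carries over verbatim since $\ell_i$ is precisely the order of $\pi_i\qq_i^2$, which is all the cyclotomic identities in Section~\ref{sec:binom} require). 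This yields
\[
(\theta_i^{(\ell_i)})^{a} = \frac{[\ell_i a]_{\qq_i,\pi_i}^!}{([\ell_i]_{\qq_i,\pi_i}^!)^{a}}\, \theta_i^{(a\ell_i)} = a!\,(\pi_i\qq_i)^{\ell_i^2 a(a-1)/2}\, \theta_i^{(a\ell_i)}.
\]
Since $R$ is the fraction field of ${\A}'$ and hence of characteristic $0$, $a!$ is invertible, and the $(\pi_i\qq_i)$-power is a root of unity in $R^\pi$. Thus $\theta_i^{(a\ell_i)}$ is a unit scalar multiple of $(\theta_i^{(\ell_i)})^{a}$, and so lies in $A_i$.

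To combine the two factors, use the standard divided-power product identity in $\fR$, which has the form $\theta_i^{(r)}\theta_i^{(a\ell_i)} = c\,\theta_i^{(n)}$ for a coefficient $c$ equal (up to a controlled $\pi_i$-sign) to $\bbinom{n}{r}_{\qq_i,\pi_i}$. Applying Lemma~\ref{lem:34.1.2}(c) with $\ell = \ell_i$ and the decomposition $n_0 = r$, $n_1 = a$, $t_0 = r$, $t_1 = 0$, the $\pi$- and $\qq$-exponents collapse and one obtains
\[
\bbinom{n}{r}_{\qq_i,\pi_i} = \qq_i^{-\ell_i a r}\bbinom{r}{r}_{\qq_i,\pi_i} = \qq_i^{-\ell_i a r},
\]
a unit in $R^\pi$. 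Consequently $\theta_i^{(n)}$ is a unit multiple of $\theta_i^{(r)}\theta_i^{(a\ell_i)} \in A_i$, completing the proof.

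The main technical point I anticipate is keeping the $\pi_i$-signs in the divided-power product formula under tight control in the super case ($i \in I_1$, so $\pi_i = -1$); once that formula is invoked, the remainder of the argument is a direct application of the formulas from Section~\ref{sec:binom} transferred to the $i$-twisted parameters $\ell_i,\qq_i,\pi_i$.
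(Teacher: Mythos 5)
Your proof is correct and follows essentially the same route as the paper: the same decomposition $n=a\ell_i+r$, the invertibility of $[t]_{\qq_i,\pi_i}^!$ for $0<t<\ell_i$ to handle the remainder, the $i$-twisted version of Lemma~\ref{lem:34.1.3} to express $\theta_i^{(a\ell_i)}$ as a unit multiple of $(\theta_i^{(\ell_i)})^a$, and Lemma~\ref{lem:34.1.2} to see that the recombining binomial coefficient is the unit $\qq_i^{-\ell_i a r}$ (the paper packages these as its identities \eqref{id:a}--\eqref{id:c}). The only cosmetic remark is that the divided-power product formula $\theta_i^{(r)}\theta_i^{(a\ell_i)}=\bbinom{n}{r}_{\qq_i,\pi_i}\theta_i^{(n)}$ holds with no extra $\pi_i$-sign in these conventions, so the caution at the end is unnecessary but harmless.
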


\begin{proof}
By definition the algebra $\fR$ is generated by $\theta_i^{(n)}$ for all $i\in I$ and $n \ge 0$.
We can write $n=a+\ell_i b$, for $0\le a <\ell_i$ and $b\in \N$. 
We note the following three identities in $\fR$:
\begin{align}  
\theta_i^{(a+\ell_i b)} &=\qq_i^{\ell_i ab} \tha_i^{(a)}  \tha_i^{(\ell_i b)},
  \label{id:a}
  \\
\tha_i^{(a)} &= [a]_{\qq_i,\pi_i}^{-1} \tha_i^a,
  \label{id:b}
 \\
\tha_i^{(\ell_i b)} &= (b!)^{-1} (\pi_{i}\qq_i)^{-\ell_i^2 \binom{b}{2}} (\tha_i^{(\ell_i)})^b, 
 \label{id:c}
\end{align}
where \eqref{id:a} follows by Lemma~\ref{lem:34.1.2} and \eqref{id:c} follows by Lemma~\ref{lem:34.1.3}, respectively. 
(Note that a sign in the power of $\vv_i$ in the identity (b) in  \cite[proof of Lemma~35.2.2]{Lu94} is optional, but the sign
cannot be dropped from the power of $\qq_i$ in \eqref{id:c}.)
The lemma follows. 
\end{proof}

\subsection{Proof of Theorem~\ref{th:Frob'}}
The uniqueness is clear.

By Lemma~\ref{lem:34.1.3} (with $\ell=1$), we have
\begin{equation}   \label{eq:fact1}
[n]_{\qq^\diamond_i,\pi^\diamond_i}^{!} =(\pi_{i}\qq_i)^{\ell_i^2 \binom{n}{2}} n!.
\end{equation}
We first observe that the existence of a homomorphism $\Fr'$ such that
$\Fr' (\theta_i) = \theta_i^{(\ell_i)} $ implies that $\Fr' (\theta_i^{(n)}) = \theta_i^{(n\ell_i)} $ for all $n\ge 0$.
Indeed, using \eqref{id:c}-\eqref{eq:fact1} we have
\[
\Fr' (\theta_i^{(n)}) = ([n]_{\qq_i^\diamond,\pi_i^\diamond} !)^{-1} \Fr' (\theta_i)^n 
= \big((\pi_{i}\qq_i)^{\ell_i^2 n(n-1)/2} n! \big)^{-1}  \Fr' (\theta_i)^n 
= \theta_i^{(n\ell_i)}.
\]

Hence it remains to show that there exists an algebra homomorphism $\Fr': \fRd \rightarrow \fR$ such that 
$\tha_i \to \tha_i^{(\ell_i)}, \forall i\in I$. 
By Proposition~\ref{quasi} (also cf. \cite{CHW1}), the algebra $\fRd$ has the following defining relations:
\[
\sum_{n+n'=1-\ang{i,j'}^\diamond}(-1)^{n'} (\pi_i^\diamond)^{np(j)+\binom{n}{2}}
\theta_i^{(n)}\theta_j\theta_i^{(n')}=0  \qquad (i\neq j \in I).
\]
By \eqref{eq:fact1} it suffices to check the following identity in $\fR$: for $i\neq j \in I,$ 
\begin{equation*}
\sum_{n+n'=1-\ang{i,j'} \ell_j/\ell_i}(-1)^{n'}  \pi_i^{\ell_i^2(np(j)+n(n-1)/2)}
(\pi_{i}\qq_i)^{- \ell_i^2 \binom{n}{2}}(\pi_{i} \qq_i)^{- \ell_i^2 \binom{n'}{2}}
\frac{(\theta_i^{(\ell_i)})^{n}}{n!} \theta_j^{(\ell_j)}   \frac{(\theta_i^{\ell_i)})^{n'}}{n'!} 
=0,
\end{equation*}
which,  by the identity \eqref{id:c}, is equivalent to checking the following identity in $\fR$:
\begin{equation}   \label{eq:serreFr'}
\sum_{n+n'=1-\ang{i,j'} \ell_j/\ell_i}(-1)^{n'} \pi_i^{\ell_i^2(np(j)+n(n-1)/2)}
\theta_i^{(\ell_i n)}  \theta_j^{(\ell_j)}   \theta_i^{(\ell_i n')} 
=0.
\end{equation}

It remains to prove \eqref{eq:serreFr'}. Set $\alpha = -\ang{i,j'}$. For any $0\le t \le \ell_i -1$, we set
$$
g_t = \sum_{\stackrel{r,s}{r+s =\ell_j \alpha +\ell_i -t}} 
(-1)^r \pi_i^{\ell_jr p(j) + r(r-1)/2}
q_i^{r(\ell_i-1-t)} \tha_i^{(r)} \tha_j^{(\ell_j)} \tha_i^{(s)} \in \fA.
$$
This is basically $f'_{i,j; \ell_j, \ell_j \alpha +\ell_i -t}$ in \cite[4.1.1(d)]{CHW1} in the notation of $\tha$'s. 
By the higher super Serre relations (see  \cite[Proposition~4.2.4]{CHW1} and \cite[4.1.1(e)]{CHW1}), we have 
$g_t=0$ for all $0\le t \le \ell_i -1$. 
Set 
$$g = \sum_{t=0}^{\ell_i-1} (-1)^t \pi_i^{t(t-1)/2} \qq_i^{\ell_j \alpha t +\ell_i t -t} g_t \tha_i^{(t)},
$$ 
which must be $0$.
On the other hand, setting $s'=s +t$, we have
\begin{equation}  \label{eq:g=0}
(0=)  \; g =\sum_{\stackrel{r, s'}{r+s' =\ell_j\alpha +\ell_i}} c_{r,s'} \tha_i^{(r)} \tha_j^{(\ell_j)} \tha_i^{(s')},
\end{equation}
where
\begin{equation*}
c_{r,s'} = \sum_{t=0}^{\ell_i-1} (-1)^{r+t} \pi_i^{\ell_jr p(j) + r(r-1)/2 +  t(t-1)/2} q_i^{r (\ell_i-1-t) +\ell_j\alpha t +\ell_i t-t} \bbinom{s'}{t}_{q_i,\pi_i}.
\end{equation*}
Taking the image of the identity \eqref{eq:g=0} under the map $\fA \rightarrow \fR$, we have
\begin{equation*}
\sum_{\stackrel{r, s'}{r+s' =\ell_j\alpha +\ell_i}} \phi( c_{r,s'}) \tha_i^{(r)} \tha_j^{(\ell_j)} \tha_i^{(s')} =0 \,\, \in \fR. 
\end{equation*}

For a fixed $s'$, we write $s' =a +\ell_i n$, where $a,n \in \Z$ and $0\le a \le \ell_i -1$. 
Note by Lemma~\ref{lem:34.1.2}(c) that $\bbinom{s'}{t}_{\qq_i, \pi_i} =  \qq_i^{ -\ell_i n t} \bbinom{a}{t}_{\qq_i,\pi_i}$. Now  using $r+s' =\ell_j\alpha +\ell_i$ we compute
\begin{align}
 \phi( c_{r,s'}) 
  &= (-1)^r \qq_i^{r(\ell_i-1)}
   \sum_{t=0}^{\ell_i-1} (-1)^{t} \pi_i^{\ell_jr p(j) + r(r-1)/2 + t(t-1)/2} \qq_i^{t (s'-1)  -\ell_i n t} \bbinom{a}{t}_{\qq_i,\pi_i}
   \notag  \\
  &= (-1)^r \qq_i^{r(\ell_i-1)}
   \sum_{t=0}^a  (-1)^{t} \pi_i^{\ell_jr p(j) + r(r-1)/2 +  t(t-1)/2} \qq_i^{t (a-1)} \bbinom{a}{t}_{\qq_i,\pi_i}
   \notag  \\
   &\stackrel{(a)}{=} \delta_{a,0} (-1)^{\ell_j \alpha +\ell_i -\ell_i n}  \pi_i^{\ell_jr p(j) + r(r-1)/2 } \qq_i^{(\ell_i-1)(\ell_j \alpha +\ell_i -\ell_i n)} 
    \notag  \\
   & \stackrel{(b)}{=} \delta_{a,0} (-1)^{\alpha \ell_j/\ell_i +1-n} \pi_i^{\ell_jr p(j) + r(r-1)/2 - r(\ell_i-1)/2}.
   \label{eq:crs}
\end{align}
The identity (a) above follows by the identity 
$ \sum_{t=0}^a  (-1)^{t} \pi_i^{ t(t-1)/2} \qq_i^{t (a-1)} \bbinom{a}{t}_{\qq_i,\pi_i} =\delta_{a,0}$ (see \cite[1.4.4]{CHW1}),
and (b) follows by the identity $\pi_i^{(\ell_i-1)\ell_i/2} \qq_i^{\ell_i^2-\ell_i} =(-1)^{\ell_i+1}$ (which is an $i$-version of \eqref{eq:34.1.2e}
with the help of $\pi_i^{\ell_i}\qq_i^{2\ell_i} =1$).

Inserting \eqref{eq:crs} into \eqref{eq:g=0} and comparing with \eqref{eq:serreFr'}, we reduce the proof of \eqref{eq:serreFr'} to verifying that
$
\pi_i^{\ell_i^2(np(j)+n(n-1)/2)} = \pi_i^{\ell_j \ell_i n p(j) + \ell_i n(\ell_i n-1)/2 - \ell_i n(\ell_i-1)/2},
$
which is equivalent to verifying
$
\pi_i^{\ell_i^2 np(j)} = \pi_i^{\ell_j \ell_i n p(j)}.$
The latter identity is trivial unless both $i$ and $j$ are  in $I_1$; when both $i$ and $j$ are  in $I_1$, the identity follows from Lemma~\ref{lem:parity1}.
Therefore, we have proved  \eqref{eq:serreFr'} and hence Theorem~\ref{th:Frob'}.


\subsection{}

We develop in this subsection the analogue of \cite[35.3]{Lu94}; recall we are still working under the assumption that $R$ is the quotient field of $ \A' $. 

\begin{prop}
	\label{prop:35.3.2}
Let $ \lambda \in X^\diamond $, i.e., $ \langle i, \lambda \rangle \in \ell_i \Z $ for all $ i \in I $. Let $ M $ denote the simple highest weight module with highest weight $ \lambda $ in the category of $R^\pi$-free weight $\UU$-modules, and let $ \eta $ be a highest weight vector of $ M^\lambda $.	
	\begin{enumerate}
		\item[(a)]
		If $ \zeta \in X $ satisfies $ M^\zeta \neq 0 $, then $ \zeta = \lambda - \sum_i \ell_i n_i i' $, where $ n_i \in \N $. In particular, $ \langle i, \zeta \rangle \in \ell_i \Z $ for all $ i \in I $.	
		\item[(b)]
		If $ i \in I $ is such that $ \ell_i \geq 2 $, then $ E_i, F_i $ act as zero on $ M $.		
		\item[(c)]
		For any $ r \geq 0 $, let $ M'_r $ be the subspace of $ M $ spanned by  $ F_{i_1}^{(\ell_{i_1})} F_{i_2}^{(\ell_{i_2})} \ldots F_{i_r}^{(\ell_{i_r})} \eta $ for various sequences $ i_1, i_2, \ldots, i_r $ in $ I $. Let $ M' = \sum_r M'_r $. Then $ M' = M $.
	\end{enumerate}
\end{prop}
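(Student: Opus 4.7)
My plan is to prove (b) first via a singular-vector argument, and then to deduce (a) and (c) as nearly formal consequences. The principal obstacle lies in part (b), in promoting the identity $F_i\eta = 0$ to the statement that $F_i$ acts as zero on all of $M$.

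For part (b), I would first check $F_i\eta = 0$ whenever $\ell_i \geq 2$. Using the commutation formula of Lemma~\ref{lem:31.1.3} (specialized with divided-power exponents both equal to $1$) together with $E_i\eta = 0$ gives $E_i F_i \eta = [\langle i,\lambda\rangle]_{\qq_i,\pi_i}\,\eta$. Since $\langle i,\lambda\rangle \in \ell_i\Z$ and $\pi_i^{\ell_i}\qq_i^{2\ell_i} = 1$, a direct computation shows $[\langle i,\lambda\rangle]_{\qq_i,\pi_i} = 0$; together with $E_j F_i \eta = \pi^{p(i)p(j)} F_i E_j \eta = 0$ for $j \neq i$, this makes $F_i\eta$ a $\UU$-singular vector of weight $\lambda - i' \neq \lambda$. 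One-dimensionality of $M^\lambda$ and simplicity of $M$ then force $F_i\eta = 0$.

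To extend this to all of $M$, I would introduce
\[
M^\flat := \{\,m \in M \mid F_i m = E_i m = 0 \text{ for every } i \text{ with } \ell_i \geq 2\,\},
\]
which contains $\eta$, and verify $M^\flat$ is $\UU$-stable. Stability under the Cartan generators, and under $F_i, E_i$ with $\ell_i \geq 2$, is immediate. The essential case is stability under $F_j, E_j$ with $\ell_j = 1$: assumption~\S\ref{subsec:ab}(a), applied with $i, j$ interchanged, gives $\ell_j \geq 1 - \langle j, i'\rangle$, forcing $\langle j, i'\rangle \geq 0$; combined with the Cartan axiom $\langle j, i'\rangle \leq 0$ and symmetry of the bilinear form this yields $\langle i, j'\rangle = \langle j, i'\rangle = 0$. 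The Serre relations then degenerate into super-commutations of $F_j, E_j$ with $F_i, E_i$ (for $\ell_i \geq 2$), whence $F_j M^\flat \subset M^\flat$ and $E_j M^\flat \subset M^\flat$. Simplicity of $M$ then gives $M^\flat = M$, completing (b).

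Once (b) is in hand, parts (a) and (c) follow easily. For (a), identities~\eqref{id:a}--\eqref{id:b} express $F_i^{(n)}$ (with $n = a + \ell_i b$ and $0 \leq a < \ell_i$) as a nonzero scalar multiple of $F_i^a F_i^{(\ell_i b)}$, which vanishes on $M$ unless $\ell_i \mid n$; since $M = \UU\eta$ is spanned by monomials $F_{i_1}^{(n_1)}\cdots F_{i_r}^{(n_r)}\eta$, the surviving monomials have $\ell_{i_k}\mid n_k$ for every $k$, giving $\zeta = \lambda - \sum_i \ell_i n_i i'$. For (c), I would verify that $M' := \sum_r M'_r$ is a nonzero $\UU$-submodule of $M$ (hence equal to $M$ by simplicity). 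Cartan-stability is clear; $F_j$-stability is vacuous for $\ell_j \geq 2$ by (b), while for $\ell_j = 1$ one has $F_j = F_j^{(\ell_j)}$ mapping $M'_r$ into $M'_{r+1}$. $E_j$-stability is vacuous for $\ell_j \geq 2$; for $\ell_j = 1$, I would induct on $r$ by moving $E_j$ rightward through a monomial $F_{i_1}^{(\ell_{i_1})}\cdots F_{i_r}^{(\ell_{i_r})}\eta$ via Lemma~\ref{lem:31.1.3}: past a factor with $i_k \neq j$ one picks up only a $\pi$-sign, past $F_j$ (when $i_k = j$) one additionally produces a scalar correction that deletes one factor (landing in $M'_{r-1}$), and the surviving $E_j$ eventually reaches $\eta$ and is killed.
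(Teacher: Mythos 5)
There is a genuine gap, and it sits exactly where the real content of the proposition lies. Your submodule criteria for $M^\flat$ and for $M'$ only test stability under the Cartan part and the unipotent generators $E_j, F_j$. But $M$ is a module in the category of $R^\pi$-free weight modules over ${}_{R}\bold{U}$ (equivalently over $\UURdot$), and at a root of $1$ this algebra is \emph{not} generated by the $E_j, F_j$ together with the Cartan elements: since $[\ell_j]^!_{\qq_j,\pi_j}=0$ for $\ell_j\ge 2$, the divided powers $E_j^{(\ell_j)}, F_j^{(\ell_j)}$ are independent generators (this is the content of Lemma~\ref{lem:fRgen}, which is what the paper's proof appeals to at the end). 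Consequently your statement that ``the essential case is stability under $F_j,E_j$ with $\ell_j=1$'' rests on a misidentified generating set, and the checks you skip are precisely the hard ones: for $M^\flat$, one must show e.g.\ $F_iF_j^{(\ell_j)}m=0$ and $E_iF_j^{(\ell_j)}m=0$ for $\ell_i,\ell_j\ge 2$ and $m\in M^\flat$, which is where the higher super Serre relations of \cite[(4.1), Prop.~4.2.4]{CHW1} and assumption~\S\ref{subsec:ab}(a) enter (this is the inductive step (d) of the paper's proof); for $M'$ in part (c), ``$E_j$-stability is vacuous for $\ell_j\ge 2$'' is true for $E_j$ itself but irrelevant, because the generator one must control is $E_j^{(\ell_j)}$, and proving $E_j^{(\ell_j)}M'_r\subset M'_{r-1}$ is the paper's step (e), a separate induction using the commutation formulas of Lemma~\ref{lem:31.1.3}. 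As written, neither $M^\flat$ nor $M'$ has been shown to be a submodule, so the appeal to simplicity does not go through.

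The salvageable parts: your base computation $F_i\eta=0$ (via $[\langle i,\lambda\rangle]_{\qq_i,\pi_i}=0$ from Lemma~\ref{lem:34.1.2}(a) and the singular-vector argument) is correct and parallels the paper's $r=0$ case; your observation that assumption~\S\ref{subsec:ab}(a), read with the roles of $i$ and $j$ exchanged, forces $\langle i,j'\rangle=0$ whenever $\ell_i\ge 2$ and $\ell_j=1$ is also correct and handles the $\ell_j=1$ generators cleanly; and the deduction of (a) from (b) via \eqref{id:a}--\eqref{id:b} is fine. Moreover the $M^\flat$ idea can in fact be repaired: e.g.\ $F_iF_i^{(\ell_i)}m=F_i^{(\ell_i)}F_im=0$, the case $E_iF_i^{(\ell_i)}m$ reduces to $F_i^{\ell_i-1}m=0$, and $F_iF_j^{(\ell_j)}m$ ($i\ne j$, $\ell_i,\ell_j\ge2$) is handled by the higher Serre relation exactly as in the paper -- but once you add those verifications (and the analogue of step (e) for part (c)) you have essentially reproduced the paper's induction, so the missing checks are not a formality.
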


\begin{proof}
	The proof is completely analogous to \cite{Lu94}. All computations are similar except that we are now working over $ R^\pi $ instead of $ R $; and the results follow from Lemma~\ref{lem:34.1.2}, \cite[(4.1) and Proposition 4.2.4]{CHW1}, and Lemma~\ref{lem:fRgen}.
	
	First, we show that
	\item[(d)] $ E_iM'_r = 0 $, $ F_i M'_r = 0 $ for any $ i \in I $ such that $ \ell_i \geq 2 $, 
	\\
which is similarly proved by induction on $ r \geq 0 $. The base case $ r = 0 $ follows from the fact that $ \bbinom{\langle i,\lambda \rangle }{t}_{\qq_i,\pi_i} =0 $ since $ \lambda \in X^{\diamond} $ (using Lemma~\ref{lem:34.1.2}) and the fact that $ E_j^{(n)} F_i \eta $ is an $ R^\pi $-linear combination of $ F_i E_j^{(n)} $ and $ E_j^{(n-1)} $. For the inductive step, we want to show that $ E_i F_j^{(\ell_j)} m = 0 $ and $ F_i F_j^{(\ell_j)} m = 0 $ for any $ i,j \in I $ such that $ \ell_i \geq 2 $ and any $ m \in M'_{r-1} \zeta $. For the first one we use the fact that $ E_i F_j^{(\ell_j)} m $ is an $ R^\pi $-linear combination of $ F_j^{(\ell_j)} E_i m $ and $ F_j^{\ell_j - 1} $ in the case $ \ell_j \geq 2 $, and for $ \ell_j = 1 $ we again use  $ \bbinom{\langle i,\lambda \rangle }{t}_{\qq_i,\pi_i} =0 $ from Lemma~\ref{lem:34.1.2}. For the second one, we may use \cite[(4.1) and Proposition 4.2.4]{CHW1} to write $ F_i F_j^{(\ell_j)} m $ as a $ R^\pi $-linear combination of $ F_j^{(\ell_j - r)} F_i F_j^{(r)} m $ for various $r$ with $ 0 \leq r < \ell_j $, and for such $r$ we have $ F_i F_j^{(r)} m = 0 $ by the induction hypothesis.
	
	Next, we may show by induction on $ r \geq 0 $ that 
	\item[(e)] $ E_i^{(l_i)} M'_r \subset M'_{r-1} $ for any $ i \in I $,
	\\
(by convention $ M'_{-1} = 0 $); again for $ m' \in M'_{r-1} $ we can use the fact that $  E_i^{(l_i)} F_j^{(\ell_j)} m' $ is an $ R^\pi $-linear combination of $ F_j^{(\ell_j)} E_i^{(\ell_i)} m' $ (which is in $ M'_{r-1} $ by the induction hypothesis), and elements of the form $ F_j^{(\ell_j - t )} E_i^{(\ell_i - t)} m' $ with $ t > 0 $ and $ t \leq \ell_i, t \leq \ell_j $ (which as before are zero if $ t < \ell_i $ or if $ t = \ell_i $ and $ t < \ell_j $, by (d), and are in $ M'_{r-1} $ if $ t = \ell_i = \ell_j $).  
	
	The statements (d), (e) together with Lemma~\ref{lem:fRgen} show that $ \sum_r M'_r $ is an $ \UURdot $-submodules of $ M $, and by simplicity of $ M $ it follows that $ M = \sum_r M'_r $, from which (a) and (b) also follow.
\end{proof}

\begin{cor}
	\label{cor:35.3.3}
	There is a unique weight $ \UURdot^{\diamond} $-module structure on $M$ (as in Proposition~\ref{prop:35.3.2}) in which the $ \zeta $-weight space is the same as that in the $ \UURdot^{\diamond} $-modules $M$, for any $ \zeta \in X^{\diamond} \subset X $, and such that $ E_i, F_i \in \fRd $ act as $ E_i^{(\ell_i)}, F_i^{(\ell_i)} \in \fR $. Moreover, this is a simple ($R^\pi$-free) highest weight module for $ \UURdot^{\diamond} $ with highest weight $ \lambda \in X^{\diamond} $.
\end{cor}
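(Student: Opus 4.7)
The plan is to combine the Frobenius-Lusztig homomorphism $\Fr'$ from Theorem~\ref{th:Frob'} with the structural data about $M$ assembled in Proposition~\ref{prop:35.3.2}. Proposition~\ref{prop:35.3.2}(a) tells us that every nonzero weight space $M^\zeta$ satisfies $\zeta \in X^\diamond$, so the existing $X$-weight decomposition is already an $X^\diamond$-weight decomposition. Declare the action of the generators of $\UURdot^\diamond$ by letting $E_i, F_i \in \fRd$ act on $M$ as $E_i^{(\ell_i)}, F_i^{(\ell_i)} \in \fR$; this is consistent with the weight grading because the embedding $i \mapsto i'^\diamond = \ell_i i'$ matches the weight shift of $E_i^{(\ell_i)}$.

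To show this extends to a genuine $\UURdot^\diamond$-module structure I verify each of the defining relations listed in Lemma~\ref{lem:31.1.3} (applied to the diamond datum). The Serre-type relations among the divided powers of the $E_i$'s (and among the $F_i$'s) are furnished directly by the superalgebra homomorphism $\Fr'\colon \fRd \to \fR$ of Theorem~\ref{th:Frob'}. The mixed relation $E_i^{(N)} F_j^{(M)}\mathbf{1}_\zeta = \pi^{MN p(i)p(j)} F_j^{(M)} E_i^{(N)}\mathbf{1}_\zeta$ for $i \ne j$ reduces to the analogous relation between $E_i^{(N\ell_i)}$ and $F_j^{(M\ell_j)}$ in $\fR$, with the correct parity sign tracked via Remark~\ref{rem:parityff} and Lemma~\ref{lem:parity}.

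The crux is the Weyl-type identity between $(E_i)^{(N)}$ and $(F_i)^{(M)}$ on the $\zeta$-weight space with $\zeta \in X^\diamond$. The approach is to apply the $\fR$-version of Lemma~\ref{lem:31.1.3} to $E_i^{(N\ell_i)} F_i^{(M\ell_i)}\mathbf{1}_\zeta$ and observe that since $\langle i,\zeta\rangle + \ell_i(M+N) \in \ell_i\Z$, Lemma~\ref{lem:34.1.2}(a) annihilates every summand with $t \notin \ell_i \Z$. For the surviving terms $t = \ell_i t'$, Lemma~\ref{lem:35.1.5} identifies the $(\qq_i,\pi_i)$-binomial $\bbinom{M\ell_i + N\ell_i + \langle i,\zeta\rangle}{\ell_i t'}_{\qq_i,\pi_i}$ with $\bbinom{M + N + \langle i^\diamond,\zeta\rangle}{t'}_{\qq_i^\diamond,\pi_i^\diamond}$, which is exactly the binomial appearing in the $\fRd$-relation. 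Matching the $\pi$- and $\qq$-prefactors then uses Lemma~\ref{lem:34.1.2}(b) together with $(\pi_i^\diamond,\qq_i^\diamond) = (\pi_i^{\ell_i^2}, \qq_i^{\ell_i^2})$; this bookkeeping step is where I expect the main friction, though the parity assumptions of \S\ref{subsec:ab} together with Lemma~\ref{lem:parity} make the $\pi$-powers work out uniformly.

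For the last assertions, $\eta$ is a highest weight vector for $\UURdot^\diamond$ of weight $\lambda \in X^\diamond$ because $E_i \eta = 0$ in $M$ forces $E_i^{(\ell_i)} \eta = 0$, and Proposition~\ref{prop:35.3.2}(c) yields $\UURdot^\diamond \cdot \eta = M$, so $M$ is cyclic of highest weight $\lambda$. Simplicity is automatic: by Lemma~\ref{lem:fRgen} combined with Proposition~\ref{prop:35.3.2}(b), every element of $\fR$ acts on $M$ as an $R^\pi$-linear combination of monomials in the $E_i^{(\ell_i)}$'s (and similarly on the $F$-side), so every $\UURdot^\diamond$-submodule of $M$ is automatically a $\UURdot$-submodule and $\UURdot$-simplicity of $M$ passes to $\UURdot^\diamond$. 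Uniqueness of the $\UURdot^\diamond$-structure is clear since the action of its generators on $M$ has been prescribed.
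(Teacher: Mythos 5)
Your overall architecture matches the paper's: Serre relations for $e_i=E_i^{(\ell_i)}$, $f_i=F_i^{(\ell_i)}$ via Theorem~\ref{th:Frob'}, weight compatibility via Proposition~\ref{prop:35.3.2}(a), highest weight via (c), and simplicity by the observation (via Lemma~\ref{lem:fRgen} and Proposition~\ref{prop:35.3.2}(b)) that a $\UURdot^{\diamond}$-submodule is automatically a $\UURdot$-submodule. The gap is exactly at the step you label ``friction'' and then wave through. If you expand $E_i^{(N\ell_i)}F_i^{(M\ell_i)}\mathbf{1}_\zeta$ by Lemma~\ref{lem:31.1.3}, kill the terms with $t\notin\ell_i\Z$ by the $i$-version of Lemma~\ref{lem:34.1.2}(a), and convert the surviving binomials by Lemma~\ref{lem:35.1.5}, the $\pi$-prefactors do \emph{not} reproduce those of the diamond relation: for $t=\ell_i t'$ one has $\pi_i^{-\binom{\ell_i t'+1}{2}}=(\pi_i^{\diamond})^{-\binom{t'+1}{2}}\pi_i^{t'\binom{\ell_i}{2}}$, so a residual factor $\pi_i^{t'\binom{\ell_i}{2}}$ survives. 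This is precisely the factor that forces the twist $\tilde{\Fr}=\psi\circ\Fr$ (equivalently the $\pi_i^{\binom{\ell_i}{2}n/\ell_i}$ in \eqref{eq:FrE}) in the proof of Theorem~\ref{th:Frob2}, and it is nontrivial already for $\osp(1|2)$ with $\ell=\ell_i=3$, where $\pi_i^{\binom{\ell_i}{2}}=\pi$. Concretely, in rank one the untwisted operators satisfy $e_if_i-\pi_i^{\diamond}f_ie_i=\pi_i^{\binom{\ell_i}{2}}\,[\langle i,\zeta\rangle/\ell_i]_{\qq_i^{\diamond},\pi_i^{\diamond}}$ on $M^{\zeta}$, not the relation on the nose; neither \S\ref{subsec:ab} nor Lemma~\ref{lem:parity} makes this sign disappear. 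You must either track it explicitly (e.g.\ rescale the action of $E_i$ by $\pi_i^{\binom{\ell_i}{2}}$, which leaves the Serre relations, weight spaces, highest weight vector and submodule lattice untouched, in the spirit of the twist in Theorem~\ref{th:Frob2}) or supply an argument absorbing it; also note that the tool for the prefactor comparison is not Lemma~\ref{lem:34.1.2}(b) (which evaluates a binomial) but the exponent identity above.

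For comparison, the paper does not verify the full divided-power presentation at all: it checks only the commutator of the generators $e_i,f_j$ on a weight vector, kills the intermediate terms $F_i^{\ell_i-t}E_i^{\ell_i-t}$ ($0<t<\ell_i$) using Proposition~\ref{prop:35.3.2}(b) (your binomial-vanishing argument is an alternative mechanism for the same purpose), and identifies the one surviving binomial by Lemma~\ref{lem:35.1.5}; since $R$ is the quotient field of $\A'$ and $[n]^!_{\qq_i^{\diamond},\pi_i^{\diamond}}$ is invertible, the divided-power relations then require no separate check. Your route through the full relations of Lemma~\ref{lem:31.1.3} is legitimate, but it commits you to the complete $\pi$-bookkeeping carried out in the proof of Theorem~\ref{th:Frob2}, which your sketch leaves unresolved.
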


\begin{proof}
We define operators $ e_i, f_i : M \to M $ for $ i \in I $ by $ e_i = E_i^{(\ell_i)} $, $ f_i = F_i^{(\ell_i)} $. Using Theorem~\ref{th:Frob'} we see that $ e_i $ and $ f_i $ satisfy the Serre-type relations of $ \fRd $.

If $ \zeta \in X \setminus X^{\diamond} $ we have $ M^\zeta = 0 $ by Proposition~\ref{prop:35.3.2}(a) above. If $ \zeta \in X^{\diamond} $ and $ m \in M^\zeta $, then we have that $ (e_i f_j - f_j e_i)(m) $ is equal to $ \delta_{i,j} \bbinom{\langle i,\lambda \rangle }{\ell_i}_{\qq_i,\pi_i} \cdot m $ plus an $ R^\pi $-linear combination of elements of the form $ F_i^{\ell_i - t} E_i^{\ell_i - t} (m) $ with $ 0 < t < \ell_i $ (this follows by \cite[Lemma 4]{Cl14})
which are zero by Proposition~\ref{prop:35.3.2}(b). Since $ \langle i, \zeta \rangle \in \ell_i \Z $,  we see from Lemma~\ref{lem:35.1.5} that
$$\\
\bbinom{\langle i,\lambda \rangle }{\ell_i}_{\qq_i,\pi_i} = \bbinom{\langle i,\lambda \rangle/\ell_i }{1}_{\qq_i^{\diamond},\pi_i^{\diamond}}
$$
and so $ (e_i f_j - f_j e_i)m = \delta_{i,j} [\langle i,\lambda \rangle/\ell_i ]_{\qq_i^{\diamond},\pi_i^{\diamond}} \cdot m $. We also have that $ e_i(M^\zeta) \subset M^{\zeta + \ell_i i'} $ and $ f_i(M^\zeta) \subset M^{\zeta - \ell_i i'} $. Thus, we have a unital $ \UURdot^\diamond $-module structure on $M$, and by Proposition \ref*{prop:35.3.2}(c) this is a highest weight module of $ \UURdot^\diamond $ with highest weight $ \lambda $ and simplicity also follows using Lemma~\ref{lem:fRgen} in the same argument as in \cite{Lu94}. 
\end{proof}

\subsection{}

Now we are ready to state our analogue of the main result of \cite[35.4]{Lu94} on a tensor product decomposition. Let $\kf$ be the $R$-subalgebra of $\fR$ generated by the elements $ \theta_i $ for various $i$ such that $ \ell_i \geq 2 $. We have $ \kf = \oplus_\nu \kf_\nu $ where $ \kf = \fR_\nu \cap \kf $.

\begin{thm} [Lusztig-Steinberg tensor product theorem]
	\label{thm:tensor}
	The $ R^\pi$-linear map 
	\[
	 \chi : \fRd \otimes_R \kf \to \fR,
	 \qquad
	  x \otimes y \mapsto\Fr'(x) y 
	  \]
 is an isomorphism of $R^\pi$-modules.	
\end{thm}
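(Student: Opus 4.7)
The plan, which parallels \cite[\S35.4]{Lu94} with $\pi$-sensitive adjustments, is to establish surjectivity and injectivity of $\chi$ separately.

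For surjectivity I would invoke Lemma~\ref{lem:fRgen}: the algebra $\fR$ is generated as an $R^\pi$-algebra by the elements $\theta_i^{(\ell_i)}$ for $i\in I$ together with $\theta_i$ for those $i\in I$ with $\ell_i\geq 2$. The first family lies in $\Fr'(\fRd)$ since $\Fr'(\theta_i)=\theta_i^{(\ell_i)}$ by Theorem~\ref{th:Frob'}, while the second family generates $\kf$ by definition. It remains to show that an arbitrary ordered product of such generators can be rewritten in the form $\Fr'(x)\,y$ with $x\in\fRd$ and $y\in\kf$. This reduces to moving every occurrence of $\theta_i^{(\ell_i)}$ to the left past each $\theta_j$ with $j\neq i$ and $\ell_j\geq 2$, which is carried out using the higher super Serre-type identities of \cite[(4.1), Proposition~4.2.4]{CHW1}; hypothesis~\ref{subsec:ab}(a), namely $\ell_i\geq -\langle i,j'\rangle+1$, ensures that the commutator corrections remain in the image of $\chi$.

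For injectivity I would test $\chi$ against a Verma module of sufficiently dominant highest weight. Fix $\lambda\in X^\diamond$ with $\langle i,\lambda\rangle$ a large multiple of $\ell_i$ for each $i\in I$, possible by $X$-regularity, and let ${}_RM(\lambda)$ be the Verma module with highest weight vector $\eta$. The standard map $\fR\to {}_RM(\lambda)$, $y\mapsto y^-\eta$, is a weight-graded $R^\pi$-module isomorphism. On the simple quotient $V(\lambda)$, or equivalently on ${}_RM(\lambda)$ restricted to weight ranges where the quotient remains injective (which can be made arbitrarily large by enlarging $\lambda$), Corollary~\ref{cor:35.3.3} equips the module with a compatible $\UURdot^\diamond$-structure in which $F_i\in\fRd$ acts as $F_i^{(\ell_i)}\in\fR$, matching the action of $\Fr'(\fRd)$ by left multiplication. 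Consequently $(\chi(x\otimes y))^-\eta=\Fr'(x)^-(y^-\eta)$, the outer action being the $\fRd$-action via Frobenius. A PBW-style argument then identifies the graded piece of the image in weight $\lambda-\nu$ with $\bigoplus_{\ell\nu_1+\nu_2=\nu}\fRd_{\nu_1}\otimes\kf_{\nu_2}$, yielding injectivity on each weight piece and hence globally as $\lambda$ varies to cover all weights.

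The main obstacle is this PBW-style decomposition. One must verify that, for $\lambda$ sufficiently dominant in $X^\diamond$, the weight space of ${}_RM(\lambda)$ of weight $\lambda-\nu$ admits a direct sum decomposition compatible with the splitting $\nu=\ell\nu_1+\nu_2$ into an ``$\fRd$-part'' and a ``$\kf$-part,'' so that $\Fr'(\fRd)$ acts freely on the $\kf$-submodule generated by $\eta$. The bookkeeping of $\pi_i$-signs and $\qq_i$-phases produced by Lemmas~\ref{lem:34.1.2} and~\ref{lem:35.1.5}, together with the parity subtleties of Remark~\ref{rem:parityff} distinguishing $\ell$ even versus odd, makes this comparison delicate; once the resulting triangular change-of-basis between PBW monomials in $\fR$ and the elements $\chi(\fRd\otimes\kf)$ is verified, both surjectivity and injectivity — and hence the theorem — follow.
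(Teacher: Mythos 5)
Your surjectivity argument coincides with the paper's: Lemma~\ref{lem:fRgen} plus the commutation claim that $\theta_i^{(\ell_i)}y-\pi_i^{a(y)}\qq_i^{b(y)}y\,\theta_i^{(\ell_i)}\in\kf$ for $y\in\kf$, proved from the higher super Serre relations of \cite{CHW1} under hypothesis~\ref{subsec:ab}(a), is exactly how the paper argues. The gap is in your injectivity argument.

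Testing $\chi$ against a Verma module is vacuous: $y\mapsto y^-\eta$ is an $R^\pi$-module isomorphism $\fR\to{}_RM(\lambda)$, so injectivity of the composite is literally the statement to be proved, and the ``PBW-style decomposition'' of the weight spaces of ${}_RM(\lambda)$ that you defer as ``the main obstacle'' is just a restatement of the injectivity of $\chi$; nothing in the proposal reduces it. The proposed remedy --- passing to the simple quotient for $\lambda\in X^\diamond$ ``sufficiently dominant'' and claiming the quotient stays faithful on arbitrarily large weight ranges --- fails at a root of $1$: by Proposition~\ref{prop:35.3.2}(b), for $\lambda\in X^\diamond$ every $F_i$ with $\ell_i\ge 2$ acts as \emph{zero} on the simple module, so $y\mapsto y^-\eta$ kills all of $\kf_\nu$, $\nu\neq 0$, already at depth one, no matter how dominant $\lambda$ is. The generic-parameter intuition that dominance makes the Verma and simple modules agree on a large range of weights breaks down precisely in the $\kf$-directions; that breakdown is the content of Proposition~\ref{prop:35.3.2}. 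Relatedly, Corollary~\ref{cor:35.3.3} equips only that simple module with a $\UURdot^\diamond$-structure --- its verification of the $[e_i,f_j]$ relation uses Proposition~\ref{prop:35.3.2}(b) to kill the correction terms $F_i^{\ell_i-t}E_i^{\ell_i-t}$ with $0<t<\ell_i$ --- so it cannot be transplanted to ${}_RM(\lambda)$ or to ``weight ranges'' of it as you propose.

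For the record, the paper treats injectivity by deferring to Lusztig's argument in \cite[35.4.2]{Lu94}, adapted via Proposition~\ref{prop:35.3.2} and Corollary~\ref{cor:35.3.3}. In that argument the simple modules with highest weight in $X^\diamond$ play the opposite role to the one you assign them: they are quasi-classical, i.e.\ they factor through $\Fr'$ with the $\kf$-directions acting by zero, and hence serve to detect the $\fRd$-tensor factor; the $\kf$-factor must be detected by different means (Steinberg-type highest weights with $\langle i,\lambda\rangle=\ell_i-1$, in the spirit of Proposition~\ref{prop:35.4.4}). A complete write-up should follow that route rather than a dominant-Verma approximation, which has no analogue at roots of unity.
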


\begin{proof}
	First, we make the following statement which is similar to (but slightly less precise than) \cite[35.4.2(a)]{Lu94}.
	
{\bf Claim.} For any $ i \in I $ and $ y \in \kf_\nu $, there exists some $ a(y), b(y) \in \Z$ such that the difference $ \theta_i^{(\ell_i)} y - \pi_i^{a(y)} \qq_{i}^{b(y)} y \theta_i^{(\ell_i)} $ belongs to $\kf$. 

For $y=y'y''$ one easily reduces the Claim to the same type of claim for $y'$ and $y''$. Hence it suffices to show this Claim when $y$ is a generator of $ \kf $ i.e. $ y = \theta_j $ where $ \ell_j \geq 2 $. Recall our assumption (a) in \S\ref{subsec:ab} that $\ell_i \geq -\langle i,j' \rangle +1$. Hence, we may use the higher Serre relation in \cite[(4.1) and Proposition 4.2.4]{CHW1} (but with $ \theta_i $'s instead of $ F_i $'s) to show that for some $ a(j), b(j) $, the difference $ \theta_i^{(\ell_i)} \theta_j - \pi_i^{a(j)} \qq_{i}^{b(j)} \theta_j \theta_i^{(\ell_i)} $ is an $ R^\pi $-linear combination of products of the form $ \theta_i^{(r)} \theta_j \theta_i^{(\ell_i - r)} $ with $ 0 < r < \ell_i $, which are contained in $ \kf $ by definition. The Claim is proved. 

By Lemma \ref{lem:fRgen}, $\fR$ is generated by $\theta_i^{(\ell_i)}$ and $\theta_j$ with $\ell_j\ge 2$. The surjectivity of $ \chi $ follows as the Claim allows us to move factors $\theta_j$ to the right which produces lower terms in $\kf$.	
	
The injectivity is proved by exactly the same argument as in \cite[35.4.2]{Lu94} using now Proposition~\ref{prop:35.3.2} and Corollary~\ref{cor:35.3.3}; the details will be skipped.
\end{proof}

The following is an analogue of \cite[Proposition 35.4.4]{Lu94}, which follows by the same argument now using the anti-involution $\sigma$ of $\fR$ which fixes each $\theta_i$ (cf. \cite[\S 1.4]{CHW1}). We omit the detail to avoid much repetition. 
\begin{prop} 
	\label{prop:35.4.4}
	Assume that the root datum is simply connected. Then, there is a unique $ \lambda \in X^+ $ such that $ \langle i, \lambda \rangle = \ell_i - 1 $ for all $i$. Let $ \eta $ be the canonical generator of $ {}_R V(\lambda) $.
	The map $ x \mapsto x^-\eta $ is an $R^\pi$-linear isomorphism $ \kf \longrightarrow  {}_R V(\lambda) $.
\end{prop}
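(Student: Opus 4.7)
The plan is to first dispose of the uniqueness of $\lambda$, then prove surjectivity of $\iota : \kf \to {}_R V(\lambda)$, $x \mapsto x^-\eta$, and finally handle injectivity, which I expect to be the main obstacle. Uniqueness is immediate: the simply-connected hypothesis identifies $X$ with the free abelian group generated by the fundamental weights dual to the embedding $I \hookrightarrow Y$, so the conditions $\langle i, \lambda\rangle = \ell_i - 1$ force $\lambda = \sum_i (\ell_i - 1)\lambda_i$ uniquely. The map $\iota$ is manifestly $R^\pi$-linear and preserves the $\N[I]$-grading, sending $\kf_\nu$ into ${}_R V(\lambda)_{\lambda-\nu}$.

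To prove surjectivity, I will exploit the vanishing $(\theta_i^{(\ell_i)})^- \eta = 0$ in ${}_R V(\lambda)$. This holds because $\langle i, \lambda\rangle = \ell_i - 1$ and the integrability relations $(\theta_i^{(n)})^-\eta = 0$ for $n > \langle i, \lambda\rangle$ are valid in the $\A^\pi$-form ${}_\A V(\lambda)$, and hence persist in ${}_R V(\lambda)$ under base change. The Claim from the proof of Theorem~\ref{thm:tensor} — namely, that for any $y \in \kf$ there exist $a(y), b(y) \in \Z$ with $\theta_i^{(\ell_i)} y - \pi_i^{a(y)} \qq_i^{b(y)} y \theta_i^{(\ell_i)} \in \kf$ — applied at $\eta$ yields $(\theta_i^{(\ell_i)})^- \cdot \kf^- \eta \subseteq \kf^- \eta$. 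Iterating this, together with the identity \eqref{id:c} that expresses $\theta_i^{(n\ell_i)}$ as a scalar multiple of $(\theta_i^{(\ell_i)})^n$, I conclude $\Fr'(\fRd) \cdot \kf^- \eta \subseteq \kf^- \eta$. Since $\fR = \Fr'(\fRd) \cdot \kf$ by Theorem~\ref{thm:tensor} and $\fR^- \eta = {}_R V(\lambda)$, this gives $\kf^- \eta = {}_R V(\lambda)$.

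Injectivity will be the hard part, and the plan is to follow Lusztig's strategy in \cite[35.4.4]{Lu94} by invoking the anti-involution $\sigma$ of $\fR$ fixing each $\theta_i$ (cf.\ \cite[\S 1.4]{CHW1}). The idea is to use $\sigma$ to construct a contravariant bilinear form $(\cdot,\cdot)_\lambda$ on ${}_R V(\lambda)$, characterized by $(\eta,\eta)_\lambda = 1$ and $(xu, v)_\lambda = (u, \sigma(x) v)_\lambda$, then argue that non-degeneracy of this form — inherited from non-degeneracy over $\A^\pi$ on the integral form ${}_\A V(\lambda)$ — together with the $\sigma$-stability of the tensor decomposition $\fR = \Fr'(\fRd) \otimes \kf$ of Theorem~\ref{thm:tensor} (which follows from $\sigma$ permuting the generators of both $\kf$ and $\fRd$) forces the already-surjective $\iota$ to be injective as well. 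The principal technical challenge I anticipate is confirming that non-degeneracy of the contravariant form survives the specialization from $\A^\pi$ to $R^\pi$ at the root of unity; this will rely on the $(q,\pi)$-binomial identities of Section~\ref{sec:binom} and a careful tracking of the $\pi$-signs introduced by the super structure in the defining relations of $\sigma$, since unlike the classical setting of Lusztig's book both parities must be handled simultaneously and compatibly with the Frobenius image $\Fr'(\fRd)$.
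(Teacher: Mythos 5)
Your treatment of uniqueness and of surjectivity is correct and is essentially the intended argument: $(\theta_i^{(\ell_i)})^-\eta=0$ because $\ell_i>\langle i,\lambda\rangle$, the Claim inside the proof of Theorem~\ref{thm:tensor} shows $\kf^-\eta$ is stable under each $(\theta_i^{(\ell_i)})^-$ (and trivially under $\theta_j^-$ for $\ell_j\ge 2$), and Lemma~\ref{lem:fRgen} then forces $\kf^-\eta=\fR^-\eta={}_RV(\lambda)$. The gap is in injectivity. The pivot of your sketch is that non-degeneracy of the contravariant form on ${}_RV(\lambda)$ is ``inherited'' from the form on ${}_\A V(\lambda)$; that inheritance is false as a general principle, and its failure is precisely the root-of-unity phenomenon. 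The Gram determinants of the form on the weight spaces of ${}_\A V(\lambda)$ are elements of $\A^\pi$ which may (and for most $\lambda\in X^+$ do) vanish under $q\mapsto\qq$; the radical of the specialized form is the maximal proper submodule of ${}_RV(\lambda)$, which is typically nonzero. So non-degeneracy at the specific ``Steinberg'' weight $\langle i,\lambda\rangle=\ell_i-1$ is itself a nontrivial theorem (equivalent to simplicity of ${}_RV(\lambda)$), which your sketch never proves and which cannot be waved through via the integral form. Moreover, even granting it, the concluding step is not an argument: a non-degenerate form on the target of a surjection gives no information about injectivity. What you would actually need is non-degeneracy of the induced pairing $(x,y)\mapsto (x^-\eta,y^-\eta)_\lambda$ on each $\kf_\nu$, or equivalently the graded equality $\dim_{R^\pi}\kf_\nu=\dim_{R^\pi}\,{}_RV(\lambda)_{\lambda-\nu}$ --- which is the same difficulty in another guise.

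The argument the paper points to (Lusztig's \cite[35.4.4]{Lu94}, transported using $\sigma$) avoids the contravariant form entirely, and this is where $\sigma$ is really used. Since $\sigma$ fixes each $\theta_i$, preserves $\kf$, and satisfies $\sigma\circ\Fr'=\Fr'\circ\sigma^\diamond$, applying it to Theorem~\ref{thm:tensor} yields the mirror decomposition $y\otimes x\mapsto y\,\Fr'(x)$, i.e.\ a direct sum $\fR=\kf\oplus\sum_i\fR\,\theta_i^{(\ell_i)}$ of $R^\pi$-modules. On the other hand, the kernel of $x\mapsto x^-\eta$ is exactly $\sum_i\fR\,\theta_i^{(\ell_i)}$: over $\Q(q)^\pi$ this is the defining presentation of $V(\lambda)$ (note $\langle i,\lambda\rangle+1=\ell_i$), and the identification persists over $\A^\pi$ and then over $R^\pi$ because the kernel is compatible with the canonical bases of $\fA$ and ${}_\A V(\lambda)$ constructed in \cite{CHW2, Cl14} and ${}_\A V(\lambda)$ is $\A^\pi$-free, so base change is exact. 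Comparing the two decompositions gives injectivity and surjectivity simultaneously. If you prefer to keep the bilinear-form route, you must actually prove non-degeneracy at $\qq$ for this particular $\lambda$ (e.g.\ by a Shapovalov-type determinant computation), and that is not easier than the argument above.
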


\subsection{}

The following is a generalization of \cite[Theorem 35.1.7]{Lu94}.  As with Theorem \ref{th:Frob'}, we may reduce the proof to the case when $R$ is the quotient field of $\mathcal{A}'$ (cf. \cite[35.1.11]{Lu94}).

\begin{thm}
 \label{th:Frob}
There is a unique $R^\pi$-superalgebra homomorphism 
$
\Fr: \fR \longrightarrow \fRd
$
 such that, for all $i\in I, n\in \N$,
\begin{align*}
\Fr (\theta_i^{(n)}) = 
\begin{cases}
\theta_i^{(n/\ell_i)} , & \text{ if }  \ell_i \text{ divides } n,
 \\
0, & \text{ otherwise}. 
\end{cases}
\end{align*}
\end{thm}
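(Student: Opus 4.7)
The plan is to exploit the Lusztig-Steinberg tensor product isomorphism $\chi: \fRd \otimes_R \kf \isoto \fR$ from Theorem~\ref{thm:tensor}. Since $\kf$ is $\N[I]$-graded with $\kf_0 = R^\pi$, the splitting $\kf = R^\pi \oplus \kf_{>0}$ yields a direct sum decomposition $\fR = \Fr'(\fRd) \oplus \chi(\fRd \otimes \kf_{>0})$ of $R^\pi$-modules, and $\Fr$ will be defined as the projection onto the first summand (identified with $\fRd$ via $\Fr'$). Uniqueness is immediate: by Lemma~\ref{lem:fRgen}, $\fR$ is generated by $\{\theta_i^{(\ell_i)}\}_{i\in I}$ together with $\{\theta_j : \ell_j \geq 2\}$, on which the stated formula forces $\Fr(\theta_i^{(\ell_i)}) = \theta_i$ and $\Fr(\theta_j) = 0$.

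The main obstacle will be multiplicativity, which I will recast through the subspace $J := \chi(\fRd \otimes \kf_{>0})$. Once $J$ is shown to be a two-sided ideal, the composition $\fRd \hookrightarrow \fR \twoheadrightarrow \fR/J$ (the first map being $\Fr'$) is an algebra homomorphism whose surjectivity follows from the direct sum decomposition above and whose injectivity is equally immediate; then $\Fr$ is defined as $\fR \twoheadrightarrow \fR/J \isoto \fRd$.

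To prove $J$ is a two-sided ideal, right multiplication by $\kf$ is automatic since $\kf_{>0}$ is an ideal of $\kf$. Right multiplication by $\Fr'(\fRd)$ reduces by induction to right multiplication by a single generator $\Fr'(\theta_i) = \theta_i^{(\ell_i)}$, where I will invoke the commutation Claim established in the proof of Theorem~\ref{thm:tensor}: for $y \in \kf_{>0}$, one has $\theta_i^{(\ell_i)} y = \pi_i^{a(y)} \qq_i^{b(y)}\, y\, \theta_i^{(\ell_i)} + k$ with $k \in \kf$, and since $k$ has weight $\ell_i i + |y| > 0$, in fact $k \in \kf_{>0}$. Rearranging, $y\, \Fr'(\theta_i) \in \Fr'(\fRd)\, \kf_{>0} + \kf_{>0} \subset J$; a symmetric argument handles left multiplication.

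Finally, I will verify the stated formula. Write $n = a + \ell_i b$ with $0 \le a < \ell_i$. If $a \ge 1$, then $\ell_i \ge 2$; by identities \eqref{id:a}--\eqref{id:b}, $\theta_i^{(n)}$ is a scalar multiple of $\theta_i^a\, \theta_i^{(\ell_i b)}$ with $\theta_i^a \in \kf_{>0}$, so $\Fr(\theta_i^{(n)}) = 0$. If $a = 0$, then identity \eqref{id:c} gives $\theta_i^{(\ell_i b)} = (b!)^{-1}(\pi_i \qq_i)^{-\ell_i^2 \binom{b}{2}} \Fr'(\theta_i)^b$, which reduces modulo $J$ to $(b!)^{-1}(\pi_i \qq_i)^{-\ell_i^2 \binom{b}{2}} \theta_i^b \in \fRd$; by \eqref{eq:fact1} this equals $\theta_i^{(b)} = \theta_i^{(n/\ell_i)}$.
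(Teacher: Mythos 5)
Your proposal is correct, and it leans on the same two pillars as the paper's own argument: the tensor product decomposition of Theorem~\ref{thm:tensor} and the commutation of $\theta_i^{(\ell_i)}$ past $\kf$ coming from the higher Serre relations, with the final formula check done via \eqref{id:a}--\eqref{id:c} and \eqref{eq:fact1} exactly as in the paper. The difference is one of packaging: the paper defines the $R^\pi$-linear map $P$ directly on the monomial spanning set $\theta_{i_1}^{(\ell_{i_1})}\cdots\theta_{i_n}^{(\ell_{i_n})}\theta_{j_1}\cdots\theta_{j_r}$ supplied by Theorem~\ref{thm:tensor} and verifies multiplicativity generator by generator, with a case analysis ($i=j$ versus $i\neq j$, $r=0$ versus $r>0$) that re-invokes the higher Serre relations of \cite[(4.1), Prop.~4.2.4]{CHW1}; you instead show that the complement $J=\chi(\fRd\otimes\kf_{>0})$ is a two-sided ideal by recycling the Claim already proved inside Theorem~\ref{thm:tensor}, and then read off $\Fr$ from the isomorphism $\fRd\cong\fR/J$. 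This buys you a cleaner structural statement (the kernel of $\Fr$ is exhibited as an ideal) and avoids repeating the Serre-relation manipulations, at the cost of having to be a bit more careful than ``a symmetric argument'': for left multiplication you need $\kf_{>0}\,\Fr'(\fRd)\subset J$, which requires rearranging the Claim (using that $\pi_i^{a(y)}\qq_i^{b(y)}$ is invertible) and an induction on the length of monomials in the generators $\theta_i^{(\ell_i)}$, with the lower term $k\in\kf_{>0}$ re-entering the induction; spell that out. Also note, as the paper does, that the whole argument (Theorem~\ref{thm:tensor}, the invertibility of $b!$ in \eqref{id:c}) takes place after the standing reduction to $R$ being the quotient field of $\A'$, so you should invoke that reduction explicitly for general $R$.
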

(We call $\Fr$ the Frobenius-Lustig homomorphism.)

\begin{proof}
The proof proceeds essentially like that of \cite[Theorem 35.1.7]{Lu94}. Uniqueness is clear; we need only prove the existence. By Theorem \ref{thm:tensor}, there is an $R^{\pi}$-linear map $P:\fR \longrightarrow \fRd,$ such that for all $i_{k} \in I$ and for $j_{p} \in I$ where $\ell_{j_{p}} \geq 2$
\begin{align*}
P(\theta_{i_{1}}^{(\ell_{i_{1}})}. . .\theta_{i_{n}}^{(\ell_{i_{n}})}\theta_{j_{1}}. . .\theta_{j_{r}}) =
\begin{cases}
 \theta_{i_{1}}. . .\theta_{i_{n}}, &\text{ if }  r = 0,
 \\
0,  &\text{ otherwise}. 
\end{cases}
\end{align*}
We now check that $P$ is a homomorphism of $R^{\pi}$-algebras.  Because $\fR$ is generated as an $R^{\pi}$-module by elements of the form $x = \theta_{i_{1}}^{(\ell_{i_{1}})}. . .\theta_{i_{n}}^{(\ell_{i_{n}})}\theta_{j_{1}}. . .\theta_{j_{r}}$, we need to check that for any such $x,$
\begin{equation}
\label{thetaj}
P(x\theta_{j}) = P(x)P(\theta_{j})
\end{equation}
for $j \in I$ such that $\ell_{j} \geq 2$ and
\begin{equation}
\label{thetai}
P(x\theta_{i}^{(\ell_{i})}) = P(x)P(\theta_{i}^{(\ell_{i})})
\end{equation}
for all $i \in I$. As \eqref{thetaj} is obvious, we will concern ourselves with \eqref{thetai}.  Note that \eqref{thetai} is clear when $r = 0$. Assume now $r>0$. Let us write $x' = \theta_{i_{1}}^{(\ell_{i_{1}})}. . .\theta_{i_{n}}^{(\ell_{i_{n}})}\theta_{j_{1}}. . .\theta_{j_{r-1}}$ and $\theta_{j} = \theta_{j_{r}}$ so that $x = x'\theta_{j}.$ For $ i = j$, we have $P(x)P(\theta_{i}^{(\ell_{i})}) = 0$ and
\begin{align*}
P(x\theta_{i}^{(\ell_{i})}) &= P(x'\theta_{i}\theta_{i}^{(\ell_{i})}) 
= P(x'\theta_{i}^{(\ell_{i})}\theta_{i}) 
= P(x'\theta_{i}^{(\ell_{i})})P(\theta_{i}) 
= 0,
\end{align*}
where the third equality is due to \eqref{thetaj}.  Now suppose that $i \neq j.$ As $\ell_{i} > -\langle i, j' \rangle,$ we may use the higher order Serre relations for quantum covering groups (cf. \cite[(4.1) and Proposition 4.2.4]{CHW1}) to write $\theta_{j}\theta_{i}^{(\ell_{i})}$ as a linear combination of terms of the form $\theta_{i}^{(m)}\theta_{j}\theta_{i}^{(n)}$ where $m + n = \ell_{i}$ and $m \geq 1.$  Because of \eqref{id:b} and \eqref{thetaj}, $P(x'\theta_{i}^{(m)}\theta_{j}\theta_{i}^{(n)}) = 0$ for $1 \leq m < \ell_{i},$ and $P(x'\theta_{i}^{(\ell_{i})}\theta_{j}) = 0.$

Now that we know that $P$ is an $R^{\pi}$-algebra homomorphism, it remains to compute $P(\theta_{i}^{(n)})$ for all $n \in \mathbb{Z}_{\geq0}.$  Write $n = b\ell_{i} + a,$ where $0 \leq a < \ell_{i}$ and $b \in \mathbb{Z}_{\geq0}.$ Using \eqref{id:a}, \eqref{id:b} and \eqref{id:c}, for $a > 0$ we have
\begin{align*}
P(\theta^{(b\ell_{i} + a)}) &= \qq_{i}^{\ell_{i}ab}P(\theta_{i}^{(a)})P(\theta_{i}^{(b \ell_i)}) 
= \qq_{i}^{\ell_{i}ab}([a]_{\qq_{i},\pi_{i}}^{!})^{-1}P(\theta_{i}^{a})P(\theta_{i}^{(b \ell_i)}) 
= 0.
\end{align*}
Similarly, for $a = 0$ we have
\begin{align*}
P(\theta_{i}^{(b\ell_{i})}) &= (b!)^{-1}(\pi_{i}\qq_{i})^{-\ell_{i}^{2}\binom{b}{2}}P(\theta_{i}^{(\ell_{i})})^{b} \\
&= (b!)^{-1}(\pi_{i}^{\diamond}\qq_{i}^\diamond)^{-\binom{b}{2}}\theta_{i}^{b} 
= ([b]_{\qq_{i}^{\diamond}, \pi_{i}^{\diamond}}^{!})^{-1}\theta_{i}^{b} 
= \theta_{i}^{(b)},
\end{align*}
where, in the third equality we used Lemma \ref{lem:34.1.3}, with $\ell =1.$ Hence, $P$ is the desired homomorphism $\Fr$.
\end{proof}

\subsection{}

We extend the Frobenius-Lusztig homomorphism $\Fr: \fR \longrightarrow \fRd$ in Theorem~\ref{th:Frob} to ${}_{R}\dot{\bold{U}}$. In contrast to the quantum group setting, we have to twist $\Fr$ slightly on one half of the quantum covering group. 

\begin{thm}
 \label{th:Frob2}
There is a unique $R^\pi$-superalgebra homomorphism 
$
\Fr: {}_{R}\dot{\bold{U}} \longrightarrow {}_{R}\dot{\bold{U}}^{\diamond}
$
 such that  for all $i\in I, n\in \Z, \lambda \in X$,
\begin{align}
  \label{eq:FrE}
\Fr (E_i^{(n)}\bold{1}_{\lambda}) = 
\begin{cases}
\pi_{i}^{\binom{\ell_{i}}{2}n/\ell_{i}}E_i^{(n/\ell_i)}\bold{1}_{\lambda} , & \text{ if   $\ell_i$ divides  $n$ and ${\lambda} \in X^{\diamond}$},
 \\
0, & \text{ otherwise}
\end{cases}
\end{align}
and
\begin{align*}
\Fr (F_i^{(n)}\bold{1}_{\lambda}) = 
\begin{cases}
F_i^{(n/\ell_i)}\bold{1}_{\lambda} , & \text{ if   $\ell_i$ divides  $n$ and ${\lambda} \in X^{\diamond}$},
 \\
0, & \text{ otherwise}. 
\end{cases}
\end{align*}
\end{thm}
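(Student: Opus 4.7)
The plan is to apply the presentation of ${}_{R}\dot{\bold{U}}$ given in Lemma~\ref{lem:31.1.3}: the stated formulas determine $\Fr$ on the generating monomials $x^{+}\bold{1}_{\lambda}(x')^{-}$ and $x^{-}\bold{1}_{\lambda}(x')^{+}$, so uniqueness is immediate and existence reduces to checking that each listed defining relation is preserved. Intuitively, $\Fr$ combines the Frobenius-Lusztig map $\fR\to\fRd$ of Theorem~\ref{th:Frob} on each half with a multiplicative twist $\pi_{i}^{\binom{\ell_{i}}{2}n/\ell_{i}}$ applied only on the $E$-side; since $n\mapsto\pi_{i}^{\binom{\ell_{i}}{2}n/\ell_{i}}$ is additive, this twist is a character of the weight lattice and hence extends naturally to products and is compatible with any weight-homogeneous relation.

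The simpler relations verify directly. The $R^{\pi}$-linearity, the idempotent relations, and the shift relations $x^{\pm}\bold{1}_{\lambda}=\bold{1}_{\lambda\pm\mu}x^{\pm}$ follow from the definition of $\Fr$ on the two halves. The Serre-type relations descend to the Serre relations of $\fRd$ via Theorem~\ref{th:Frob} (the weight-character twist respects the homogeneity of each Serre summand). The off-diagonal commutation $(\theta_{i}^{(N)})^{+}(\theta_{j}^{(M)})^{-}\bold{1}_{\lambda}=\pi^{MNp(i)p(j)}(\theta_{j}^{(M)})^{-}(\theta_{i}^{(N)})^{+}\bold{1}_{\lambda}$ for $i\neq j$ is preserved by invoking Lemma~\ref{lem:parity1}: writing $M=\ell_{j}M'$, $N=\ell_{i}N'$, the exponent $MNp(i)p(j)=\ell_{i}\ell_{j}M'N'p(i)p(j)$ reduces modulo $2$ to $M'N'p(i)p(j)$, since $\ell_{i},\ell_{j}$ share the parity of $\ell$ in the only nontrivial case $p(i)p(j)=1$.

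The main obstacle is the diagonal $E$-$F$ commutation. Apply $\Fr$ to both sides of the identity
\[
(\theta_{i}^{(N)})^{+}\bold{1}_{\lambda}(\theta_{i}^{(M)})^{-}=\sum_{t\geq 0}\pi_{i}^{MN-\binom{t+1}{2}}(\theta_{i}^{(M-t)})^{-}\bbinom{M+N+\langle i,\lambda\rangle}{t}_{\qq_{i},\pi_{i}}\bold{1}_{\lambda+(M+N-t)i'}(\theta_{i}^{(N-t)})^{+}.
\]
The LHS vanishes under $\Fr$ unless $\lambda\in X^{\diamond}$ and $\ell_{i}\mid M,N$; assume these and write $M=\ell_{i}M'$, $N=\ell_{i}N'$. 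On the RHS, Lemma~\ref{lem:34.1.2}(a) forces $\ell_{i}\mid t$ (since $M+N+\langle i,\lambda\rangle\in\ell_{i}\Z$); write $t=\ell_{i}t'$. Lemma~\ref{lem:35.1.5} converts the surviving binomial into $\bbinom{M'+N'+\langle i^{\diamond},\lambda\rangle}{t'}_{\qq_{i}^{\diamond},\pi_{i}^{\diamond}}$, precisely the binomial appearing in the analogous relation in ${}_{R}\dot{\bold{U}}^{\diamond}$; the $\qq_{i}$-exponents match identically modulo $\qq_{i}^{2\ell_{i}}=1$. The heart of the argument is matching the $\pi_{i}$-exponents: after extracting the twists $\pi_{i}^{\binom{\ell_{i}}{2}N'}$ on the LHS and $\pi_{i}^{\binom{\ell_{i}}{2}(N'-t')}$ on each RHS summand, the required equality reduces via $\pi_{i}^{\diamond}=\pi_{i}^{\ell_{i}^{2}}$ to the elementary identity
\[
\frac{\ell_{i}^{2}t'(t'+1)}{2}-\frac{\ell_{i}t'(\ell_{i}t'+1)}{2}=\frac{\ell_{i}(\ell_{i}-1)t'}{2}.
\]
An analogous but slightly simpler computation handles $(\theta_{i}^{(N)})^{-}\bold{1}_{\lambda}(\theta_{i}^{(M)})^{+}$: the occurrence of $\binom{t}{2}$ in place of $\binom{t+1}{2}$ causes the analogous $\pi_{i}$-discrepancy to vanish modulo $\pi_{i}^{2}=1$, so no twist on the $F$-side is required. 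This asymmetry between $\binom{t+1}{2}$ and $\binom{t}{2}$ is precisely what dictates the asymmetry in the twist between the $E$ and $F$ formulas. All relations in Lemma~\ref{lem:31.1.3} having been verified, the universal property of that presentation produces the desired $R^{\pi}$-superalgebra homomorphism.
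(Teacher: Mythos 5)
Your proposal is correct and follows essentially the same route as the paper: reduce to the presentation of Lemma~\ref{lem:31.1.3}, build the map from Theorem~\ref{th:Frob} on each half with the sign twist $\pi_i^{\binom{\ell_i}{2}n/\ell_i}$ only on the plus side (the paper's $\tilde{\Fr}=\psi\circ\Fr$), and verify the two diagonal $E$--$F$ relations using Lemma~\ref{lem:35.1.5} together with the exponent identity $\ell_i^2\binom{t'+1}{2}-\binom{\ell_i t'+1}{2}=\binom{\ell_i}{2}t'$, which is precisely the paper's identity $\pi_{i}^{-\binom{t+1}{2}}=(\pi_{i}^{\diamond})^{-\binom{t/\ell_{i}+1}{2}}\pi_{i}^{(t/\ell_{i})\binom{\ell_{i}}{2}}$. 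The only cosmetic differences are that you discard the $\ell_i\nmid t$ terms via vanishing of the binomial (the $i$-version of Lemma~\ref{lem:34.1.2}(a)) rather than via $\Fr$ killing the non-divisible divided powers, and your aside that the $\qq_i$-exponents match ``modulo $\qq_i^{2\ell_i}=1$'' is vacuous here (indeed $\qq_i^{2\ell_i}=\pi_i^{\ell_i}$, not $1$, but no such exponents occur outside the binomials); neither affects the argument.
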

(We also call $\Fr$ in this theorem the Frobenius-Lustig homomorphism.)

\begin{proof}
Let $\Fr: \fR \longrightarrow \fRd$ be the homomorphism from Theorem \ref{th:Frob}. Consider the homomorphism $\tilde{\Fr} = \psi \circ \Fr$, where $\psi: \fRd \longrightarrow \fRd$ is the algebra automorphism such that $\theta_{i}^{(n)} \mapsto \pi_{i}^{n}\theta_{i}^{(n)}.$ The proof, much like that of \cite[Theorem 35.1.9]{Lu94}, amounts to checking that  for $x,x' \in \fR$ the assignment
\[
x^{+}\bold{1}_{\lambda}x'^{-} \mapsto \tilde{\Fr}(x^{+})\bold{1}_{\lambda}Fr(x'^{-}), \quad x^{-}\bold{1}_{\lambda}x'^{+} \mapsto \Fr(x^{-})\bold{1}_{\lambda}\tilde{\Fr}(x'^{+}),
\]
for $ \lambda \in X^{\diamond}$, and 
\[
x^{+}\bold{1}_{\lambda}x'^{-} \mapsto 0, \quad x^{-}\bold{1}_{\lambda}x'^{+} \mapsto 0,
\]
for $\lambda \in X \backslash X^{\diamond}$ satisfies the the appropriate relations. These are the relations of Lemma \ref{lem:31.1.3} for ${}_{R}\dot{\bold{U}}$ and for ${}_{R}\dot{\bold{U}}^{\diamond}$, using Lemma \ref{lem:35.1.5} to deal with the $(\qq,\pi)$-binomial coefficients. The use of the homomorphism $\tilde{\Fr}$ (in place of $\Fr$) on $\bold{U}^+$ is necessitated by the first and second relations in Lemma \ref{lem:31.1.3}. Both sides of the first relation are mapped to zero by $\Fr$ unless $N, M \in \ell_{i}\mathbb{Z}$ and $\lambda \in X^{\diamond}$, so we focus on this case. Recalling $\qq_{i}^{\diamond}, \pi_{i}^{\diamond}$ from \eqref{eq:diamond}, we have
\begin{align*}
\Fr & \left(\sum_{t \geq 0} \pi_{i}^{MN - \binom{t + 1}{2}}F_{i}^{(M-t)}\bbinom{M + N + \langle i, \lambda \rangle}{t}_{\qq_{i}, \pi_{i}} \bold{1}_{\lambda +(M+N-t)i'}E_{i}^{(N-t)} \right) \\
&= \sum_{t \geq 0} \pi_{i}^{MN - \binom{t + 1}{2}} \Fr(F_{i}^{(M-t)})\bbinom{M + N + \langle i, \lambda \rangle}{t}_{\qq_{i}, \pi_{i}} \bold{1}_{\lambda +(M+N-t)i'}Fr(E_{i}^{(N-t)}) \\
&= \sum_{t \geq 0, t\in \ell_{i}\mathbb{Z}} (\pi_{i}^{\diamond})^{(M/\ell_{i})(N/\ell_{i}) - \binom{t/\ell_{i} + 1}{2}}\pi_{i}^{t/\ell_{i}\binom{\ell_{i}}{2}} F_{i}^{((M-t)/\ell_{i})}\bbinom{(M + N + \langle i, \lambda \rangle)/\ell_{i}}{t/\ell_{i}}_{\qq_{i}^{\diamond}, \pi_{i}^{\diamond}} \\
&\qquad \qquad\qquad \cdot \bold{1}_{\lambda +(M+N-t)i'}\pi_{i}^{(N-t)/\ell_{i}\binom{\ell_{i}}{2}}E_{i}^{((N-t)/\ell_{i})} \\
&= \pi_{i}^{N/\ell_{i}\binom{\ell_{i}}{2}}\sum_{t \geq 0, t\in \ell_{i}\mathbb{Z}} (\pi_{i}^{\diamond})^{(M/\ell_{i})(N/\ell_{i}) - \binom{t/\ell_{i} + 1}{2}}F_{i}^{((M-t)/\ell_{i})}\bbinom{(M + N + \langle i, \lambda \rangle)/\ell_{i}}{t/\ell_{i}}_{\qq_{i}^{\diamond}, \pi_{i}^{\diamond}} \\
&\qquad \qquad\qquad\qquad \cdot \bold{1}_{\lambda +(M+N-t)i'}E_{i}^{((N-t)/\ell_{i})} \\
&=\pi_{i}^{N/\ell_{i}\binom{\ell_{i}}{2}}E_{i}^{(N/\ell_{i})}\bold{1}_{\lambda}F_{i}^{(M/\ell_{i})} \\
&=\Fr \big(E_{i}^{(N)}\bold{1}_{\lambda}F_{i}^{(M)} \big),
\end{align*}
where we have used $\pi_{i}^{ - \binom{t + 1}{2}} =(\pi_{i}^{\diamond})^{- \binom{t/\ell_{i} + 1}{2}}\pi_{i}^{t/\ell_{i}\binom{\ell_{i}}{2}}$ and Lemma~\ref{lem:35.1.5} in the second equality above. 

The verification of the second relation of Lemma \ref{lem:31.1.3} is entirely similar, and the other relations therein are straightforward.
\end{proof}

\section{Small quantum covering groups}
  \label{sec:small}

In this section, we construct and study the small quantum covering groups. 
We take $R^{\pi} = \mathbb{Q}(\qq)^{\pi}$, where $\qq$ is as in \eqref{eq:qq}. 

\subsection{}

Let $_{R}\dot{\mathfrak{u}}$ be the subalgebra of $_{R}\dot{\bold{U}}$ generated by $E_{i}\bold{1}_{\lambda}$ and $F_{i}\bold{1}_{\lambda}$ for all $i \in I$ with $\ell_i \ge 2$ and $\lambda \in X.$ It is clear then, that $_{R}\dot{\mathfrak{u}}$ is spanned by terms of the form $x^{+}\bold{1}_{\lambda}x'^{-}$ where $x, x' \in \mathfrak{f}.$ We follow the construction of \cite[\S 36.2.3]{Lu94} in extending $_{R}\dot{\bold{U}}$ to a new algebra $_{R}\hat{\bold{U}}$. Any element of $_{R}\dot{\bold{U}}$ can be written as a sum of the form
$\sum_{\lambda, \mu \in X} x_{\lambda, \mu}$
where $x_{\lambda, \mu} \in \bold{1}_{\lambda}{}_{R}\dot{\bold{U}}\bold{1}_{\mu}$ is zero for all but finitely many pairs $\lambda, \mu.$ We relax this condition in $_{R}\hat{\bold{U}}$ by allowing such sums to have infinitely many nonzero terms provided that the corresponding $\lambda - \mu$ are contained in a finite subset of $X$.  The algebra structure extends in the obvious way.  We define ${}_{R}\hat{\mathfrak{u}}$ to be the subalgebra of ${}_{R}\hat{\bold{U}}$ with $x_{\lambda, \mu} \in  \bold{1}_{\lambda}{}_{R}\dot{\mathfrak{u}}\bold{1}_{\mu}$.

Let $2\tilde{\ell}$ be the smallest positive integer such that $\qq^{2\tilde{\ell}} = 1.$ Hence, $\tilde{\ell} = 2\ell$ for $\ell$ odd and $\tilde{\ell} = \ell$ for $\ell$ even. We define the cosets
\begin{align}
  \label{eq:coset}
\bold{c}_{\bold{a}} = \{ \lambda \in X \mid \langle i, \lambda \rangle \equiv a_{i}\pmod{2\tilde{\ell}}, \quad \forall i \in I \}, 
\end{align}
for ${\bold a} =(a_i | i\in I)$ with $0 \leq a_{i} \leq 2\tilde{\ell} - 1$.  Note that there are at most $(2\tilde{\ell})^{|I|}$ such cosets and they partition $X$. Moreover, for each coset $\bold{c}$, $\bold{1}_{\bold{c}} := \sum_{\lambda \in \bold{c}} \bold{1}_{\lambda}$ is an element of ${}_{R}\hat{\mathfrak{u}}.$

Let ${}_{R}\mathfrak{u}$ (resp. ${}_{R}\mathfrak{u}'$) be the $R^{\pi}$-submodule of ${}_{R}\hat{\mathfrak{u}}$ generated by the elements $x^{+}\bold{1}_{\bold{c}}x'^{-}$ (resp. $x^{-}\bold{1}_{\bold{c}}x'^{+}$) where $x, x' \in \mathfrak{f}.$ The following is an analogue of \cite[Lemma 36.2.4]{Lu94}.

\begin{lem}
\begin{enumerate}
\item For any $u \in {}_{R}\mathfrak{u}$ and $0 \leq M \leq \ell_{i} - 1$,   $F_{i}^{(M)}u$ lies in ${}_{R}\mathfrak{u}$.
\item We have ${}_{R}\mathfrak{u} = {}_{R}\mathfrak{u}'$, and ${}_{R}\mathfrak{u}$ is a subalgebra of ${}_{R}\hat{\mathfrak{u}}$.
\end{enumerate}
\end{lem}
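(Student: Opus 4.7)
The argument is a $\pi$-enhancement of \cite[Lemma~36.2.4]{Lu94}, resting on two ingredients: the rearrangement formulas of Lemma~\ref{lem:31.1.3}, and the coset-constancy of the $(\qq_i,\pi_i)$-binomial coefficients that appear when those formulas are applied. First I would dispose of the trivial case $\ell_i=1$ (only $M=0$ is allowed, so $F_i^{(M)}u=u$) and assume $\ell_i\ge 2$ throughout.

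For part~(1), by $R^\pi$-linearity I reduce to a spanning element $u=x^+\mathbf{1}_{\mathbf{c}}x'^-$ with $x,x'$ products of the distinguished generators $\theta_j$ ($\ell_j\ge 2$) of $\mathfrak{f}$, and induct on the length of $x$ as such a word. Push $F_i^{(M)}$ one letter further into $x^+$: if the next letter is $E_j^{(N)}$ with $j\ne i$, the third relation of Lemma~\ref{lem:31.1.3} super-commutes $F_i^{(M)}$ past it up to a sign $\pi^{MNp(i)p(j)}$, strictly reducing the length. If the next letter is $E_i^{(N)}$ (so $N<\ell_i$), slide $E_i^{(N)}\mathbf{1}_\lambda=\mathbf{1}_{\lambda+Ni'}E_i^{(N)}$ and apply the second relation of Lemma~\ref{lem:31.1.3} to obtain
\[
F_i^{(M)}\mathbf{1}_{\lambda+Ni'}E_i^{(N)}=\sum_{t\ge 0}\pi_i^{MN+t\langle i,\lambda+Ni'\rangle-\binom{t}{2}}E_i^{(N-t)}\bbinom{M+N-\langle i,\lambda+Ni'\rangle}{t}_{\qq_i,\pi_i}\mathbf{1}_{\bullet}F_i^{(M-t)}.
\]
All exponents that appear satisfy $N-t<\ell_i$ and $M-t\le\ell_i-1$, so $E_i^{(N-t)},F_i^{(M-t)}\in\mathfrak{f}$; summing over $\lambda\in\mathbf{c}$ reassembles each term into an expression with $\mathbf{1}_{\mathbf{c}''}$ for a shifted coset, and the inductive parameter strictly decreases. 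Once $F_i^{(M')}$ has been transported through $\mathbf{1}_{\mathbf{c}''}$, it is absorbed into $x'^-$ to give an element of $\mathfrak{f}^-$, concluding the induction.

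The coset-constancy claim — that $\bbinom{M+N-\langle i,\lambda\rangle}{t}_{\qq_i,\pi_i}$ (and the $\pi_i^{t\langle i,\lambda\rangle}$ sign) depend only on $\mathbf{c}$, not on $\lambda\in\mathbf{c}$ — follows from \eqref{eq:vq}: the $v_i$-binomial is periodic in its upper index modulo $2\ell_i$ and the $\sq_i$-prefactor modulo $4$, and in both cases of $\tilde\ell$ both periods divide $2\tilde\ell$ (using $\ell_i\mid\ell$). For part~(2), by a symmetric argument (interchanging the roles of $E$ and $F$, or equivalently applying the anti-involution $\sigma$ of $\fR$ of \cite[\S 1.4]{CHW1} suitably extended) one has $E_i^{(M)}u'\in{}_R\mathfrak{u}'$ for $u'\in{}_R\mathfrak{u}'$ and $0\le M\le\ell_i-1$. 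To prove $\mathfrak{u}\subseteq\mathfrak{u}'$, take a generator $x^+\mathbf{1}_{\mathbf{c}}x'^-$ of $\mathfrak{u}$ and apply part~(1) letter-by-letter to rewrite it as a sum of terms of the form $y^-\mathbf{1}_{\mathbf{c}'''}y'^+$; the reverse inclusion is symmetric. Closure under multiplication then follows easily: given $u=x^+\mathbf{1}_{\mathbf{c}}x'^-$ and $v=y^+\mathbf{1}_{\mathbf{c}''}y'^-$, the middle piece $x'^-y^+\mathbf{1}_{\mathbf{c}''}$ lies in $\mathfrak{u}=\mathfrak{u}'$, and one further application of part~(1) reassembles $uv$ inside $\mathfrak{u}$. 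The main obstacle is the coset-constancy verification, since without it the very statement of part~(1) would fail; the ancillary difficulty is keeping track of the $\pi_i$-signs through the successive super-commutations.
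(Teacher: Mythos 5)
Your proposal is correct and follows essentially the same route as the paper's proof: induction on the length of the word in $x^+$, pushing $F_i^{(M)}$ through using the relations of Lemma~\ref{lem:31.1.3}, observing that the $(\qq_i,\pi_i)$-binomial coefficients and $\pi_i$-signs that appear depend only on the coset $\mathbf{c}$, and deducing part (2) by symmetry and repeated application of part (1). The only cosmetic difference is that you check coset-constancy via the conversion \eqref{eq:vq} and periodicity of the $v_i$-binomials, whereas the paper applies Lemma~\ref{lem:34.1.2} and then kills the resulting power of $\qq_i$ using $\langle i,\lambda\rangle\equiv\langle i,\mu\rangle \pmod{2\tilde\ell}$; the two computations are equivalent.
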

The algebra ${}_{R}\mathfrak{u}$ is called the {\em small quantum covering group}.

\begin{proof}
We follow the proof in \cite{Lu94}.  We prove the first statement by induction on $p$, where our $u = E_{i_{1}}^{(n_{1})}. . .E_{i_{p}}^{(n_{p})}x'^{-}.$  The result is obvious for $p = 0$, so we now consider $p \geq 1$ and rewrite $u$ as
\[
u = \bold{1}_{\bold{c}'}E_{i_{1}}^{(n_{1})}x_{1}^{+}x'^{-}
\]
where $x_{1} = \theta_{i_{2}}^{(n_{2})}. . .\theta_{i_{p}}^{(n_{p})}.$  When $i \neq i_{1}$, the result is immediate, so we consider $i = i_{1}.$ In that case, using the relations of Lemma \ref{lem:31.1.3}, we have 
\begin{align*}
F_{i}^{(M)}u &= \sum_{\lambda \in \bold{c}'} \sum_{t \leq n_{1}, t \leq M} \pi_{i}^{MN +t\langle i, \lambda \rangle - \binom{t}{2}} \bbinom{n_{1} + M - \langle i, \lambda \rangle}{t}_{\qq_{i},\pi_{i}}
\\
&\qquad\qquad\qquad\qquad \cdot E_{i}^{(a_{1}-t)}\bold{1}_{\lambda - (n_{1} + M -t)i'}F_{i}^{(M-t)}x_{1}^{+}x'^{-}.
\end{align*}

Fix  $\mu \in \bold{c}'$.  Then for any $\lambda \in \bold{c}'$,  $n_{1} + M - \langle i, \lambda \rangle \equiv n_{1} + M - \langle i, \mu \rangle \text{ mod}(\ell_{i})$. Using Lemma ~\ref{lem:34.1.2} and noting that $t < \ell_{i}$, we have that 
\begin{align*}
\bbinom{n_{1} + M - \langle i, \lambda \rangle}{t}_{\qq_{i},\pi_{i}} &=\qq_{i}^{-\ell_{i}t(\langle i, \lambda \rangle - \langle i, \mu \rangle)} \bbinom{n_{1} + M - \langle i, \mu \rangle}{t}_{\qq_{i},\pi_{i}} \\
&= \bbinom{n_{1} + M - \langle i, \mu \rangle}{t}_{\qq_{i},\pi_{i}}, 
\end{align*}
where we used in the second equality the condition that $\langle i, \lambda \rangle - \langle i, \mu \rangle \equiv 0 \text{ mod}(2\tilde{\ell})$.  Hence, $F_{i}^{(M)}u $ is equal to 
\begin{align*}
& \sum_{t \leq n_{1}, t \leq M} \pi_{i}^{MN +t\langle i, \mu \rangle - \binom{t}{2}} \bbinom{n_{1} + M - \langle i, \mu \rangle}{t}_{\qq_{i},\pi_{i}}E_{i}^{(a_{1}-t)}(\sum_{\lambda \in \bold{c}'}\bold{1}_{\lambda - (n_{1} + M -t)i'})F_{i}^{(M-t)}x_{1}^{+}x'^{-} \\
&= \sum_{t \leq n_{1}, t \leq M} \pi_{i}^{MN +t\langle i, \mu \rangle - \binom{t}{2}} \bbinom{n_{1} + M - \langle i, \mu \rangle}{t}_{\qq_{i},\pi_{i}}E_{i}^{(a_{1}-t)}\bold{1}_{\bold{c}''}F_{i}^{(M-t)}x_{1}^{+}x'^{-}, 
\end{align*}
for some other $\bold{c}''.$ Hence, $F_{i}^{(M)}u \in {}_{R}\mathfrak{u}$ by induction.  Finally, the second statement is shown by repeated application of this result as in \cite[Lemma 36.2.4]{Lu94}.
\end{proof}

\subsection{}

Recall there are a comultiplication $ \Delta $ and an antipode $S$ on $ \UU $ as defined in \cite[Lemmas~2.2.1, 2.4.1]{CHW1}. 
Write $ {}_{\lambda}\bold{U}_{\mu}$ for the subspace of ${}_{R}\dot{\bold{U}}$ spanned by elements of the form $\bold{1}_{\lambda}x\bold{1}_{\mu}$, where $x \in {}_{R}\bold{U}$ and write $p_{\lambda, \mu}$ for the canonical projection ${}_{R}\bold{U} \rightarrow {}_{\lambda}\bold{U}_{\mu}$.   As in \cite[23.1.5, 23.1.6]{Lu94}, $\Delta$ and $S$ induce $R^{\pi}$-linear maps 
\[
\Delta_{\lambda, \mu, \lambda', \mu'}: {}_{\lambda + \lambda'}\bold{U}_{\mu + \mu'} \longrightarrow {}_{\lambda}\bold{U}_{\mu} \otimes {}_{\lambda'}\bold{U}_{\mu'}
\]
 given by $\Delta_{\lambda, \mu, \lambda', \mu'}(p_{\lambda + \lambda', \mu + \mu'}(x)) = (p_{\lambda,\mu} \otimes p_{\lambda', \mu'})(\Delta(x))$, for $\lambda, \mu, \lambda', \mu' \in X$, and 
 \[
 \dot{S}: {}_{R}\dot{\bold{U}} \longrightarrow {}_{R}\dot{\bold{U}}
 \]
  defined by $\dot{S}(\bold{1}_{\lambda}x\bold{1}_{\mu}) = \bold{1}_{-\mu}S(x)\bold{1}_{-\lambda}$ for $x \in {}_{R}\bold{U}.$
For example, $\Delta(E_i) = E_i \otimes 1 + \tilde{J}_i \tilde{K}_i \otimes E_i$ in $ {}_R {\UU} $, and hence we obtain 
\[
\Delta_{\lambda-\nu+i', \lambda-\nu, \nu, \nu}(  E_i \bold{1}_{\lambda} ) = p_{\lambda - \nu + i',\lambda - \nu} \otimes p_{\nu,\nu} ( E_i \otimes 1 + \tilde{J}_i \tilde{K}_i \otimes E_i) = E_i \bold{1}_{\lambda-\nu} \otimes \bold{1}_\nu.
\]
This collection of maps is called the comultiplication on $ {}_R \dot{\UU} $, and it can be formally regarded as a single linear map 
\[
\dot{\Delta} =\prod_{\lambda, \mu, \lambda', \mu' \in X} \hat{\Delta}_{\lambda,\mu, \lambda', \mu'}: {}_R \dot{\UU} \longrightarrow \prod_{\lambda, \mu, \lambda', \mu' \in X} 
{}_{\lambda}\bold{U}_{\mu} \otimes {}_{\lambda'}\bold{U}_{\mu'}.
\]
A comultiplication $ \dot{\Delta}^\diamond $ on ${}_{R}\dot{\UU}^{\diamond}$ can be defined in the same way.

\begin{prop}
	The Frobenius-Lusztig homomorphism $\Fr$ is compatible with the comultiplications on ${}_{R}\dot{\UU}$ and ${}_{R}\dot{\UU}^{\diamond}$, i.e., $ \dot{\Delta}^\diamond \circ \Fr = (\Fr \otimes \Fr) \circ \dot{\Delta}$.
\end{prop}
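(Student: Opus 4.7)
The plan is to verify the identity $\dot{\Delta}^\diamond\circ\Fr=(\Fr\otimes\Fr)\circ\dot{\Delta}$ on a generating set. Since ${}_{R}\dot{\UU}$ is generated as an $R^\pi$-algebra by elements of the form $E_i^{(n)}\bold{1}_\lambda$ and $F_i^{(n)}\bold{1}_\lambda$ (for $i\in I$, $n\in\N$, $\lambda\in X$, cf. Lemma~\ref{lem:31.1.3}), and both sides are $R^\pi$-linear and respect the relevant weight/multiplicative structure on the tensor product, it suffices to check the equality on such generators. By the structural symmetry between the $E$- and $F$-sides (up to the twist $\pi_i^{\binom{\ell_i}{2}n/\ell_i}$ introduced in \eqref{eq:FrE}), I would treat the $E$-case in detail and handle the $F$-case by an entirely parallel computation.

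First I would record the comultiplication of a divided power: the formulas of \cite{CHW1} give $\Delta(E_i^{(n)})$ as a sum $\sum_{t=0}^{n}\pi_i^{\alpha(t)}\qq_i^{t(n-t)}E_i^{(n-t)}(\widetilde{J}_i\widetilde{K}_i)^t\otimes E_i^{(t)}$ with an explicit exponent $\alpha(t)$. Projecting to ${}_{\lambda-\nu+ni'}\bold{U}_{\lambda-\nu}\otimes{}_\nu\bold{U}_\nu$ and evaluating the diagonal operator factor on $\bold{1}_\nu$ expresses $\dot{\Delta}(E_i^{(n)}\bold{1}_\lambda)$ as an explicit double sum indexed by $(t,\nu)$. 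Applying $\Fr\otimes\Fr$ through Theorem~\ref{th:Frob2}, a summand survives only when $\ell_i$ divides both $t$ and $n-t$ (equivalently, $\ell_i\mid n$ and $\ell_i\mid t$) and both $\lambda-\nu,\nu\in X^\diamond$; reindexing $t=\ell_i t'$ produces the expected divided-power expansion in ${}_{R}\dot{\UU}^\diamond$. On the other hand, $\dot{\Delta}^\diamond(\Fr(E_i^{(n)}\bold{1}_\lambda))$ is automatically zero unless $\ell_i\mid n$ and $\lambda\in X^\diamond$, in which case it expands through the corresponding comultiplication formula over the diamond datum using $\qq_i^\diamond,\pi_i^\diamond$. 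The scalar comparison then rests on three ingredients already developed in the paper: (i) Lemma~\ref{lem:35.1.5}, which converts the $(\qq_i,\pi_i)$-binomial factors into $(\qq_i^\diamond,\pi_i^\diamond)$-binomials exactly when divisibility by $\ell_i$ holds; (ii) Lemma~\ref{lem:34.1.2}(a), which enforces the vanishing of summands with $\ell_i\nmid t$; and (iii) the exponent identity $\pi_i^{-\binom{t+1}{2}}=(\pi_i^\diamond)^{-\binom{t/\ell_i+1}{2}}\pi_i^{(t/\ell_i)\binom{\ell_i}{2}}$ already used in the proof of Theorem~\ref{th:Frob2}.

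The main obstacle, and the only real subtlety, is the careful bookkeeping of $\pi$-powers: one must show that the twist $\pi_i^{\binom{\ell_i}{2}n/\ell_i}$ of \eqref{eq:FrE} splits correctly across the tensor product under $\dot{\Delta}^\diamond$. With $n=\ell_i n'$ and $t=\ell_i t'$, the base splitting $\binom{\ell_i}{2}n'=\binom{\ell_i}{2}(n'-t')+\binom{\ell_i}{2}t'$ is trivial, but the match must also absorb several additional contributions: the $\pi$-cross term arising from $\alpha(t)$ in the comultiplication formula, the contribution of $\widetilde{J}_i^t$ acting on $\bold{1}_\nu$ (a power of $\pi_i$ controlled by $\langle i,\nu\rangle\in\ell_i\Z$ when $\nu\in X^\diamond$), and the $\qq_i$-contributions from $\widetilde{K}_i^t$ acting as $\qq_i^{t\langle i,\nu\rangle}$ on $\bold{1}_\nu$, which on $X^\diamond$ must be identified via $\langle i,\nu\rangle/\ell_i$ with the $\qq_i^\diamond$-power appearing in $\dot{\Delta}^\diamond$. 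Once these exponent matchings are verified, the remaining computation is a mechanical term-by-term comparison following the template of the proof of Theorem~\ref{th:Frob2}, and the $F$-case goes through by the same template but without the additional $\pi$-twist.
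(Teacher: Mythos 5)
Your proposal is correct and follows essentially the same route as the paper's proof: reduce to the generators $E_i^{(n)}\bold{1}_{\lambda}$, $F_i^{(n)}\bold{1}_{\lambda}$, expand the comultiplication of a divided power, note that $\Fr\otimes\Fr$ annihilates every summand in which a divided power of $E_i$ (or $F_i$) is not divisible by $\ell_i$, and match the surviving exponents, the twist being absorbed exactly by $\pi_i^{\binom{\ell_i}{2}p+\binom{\ell_i}{2}r}=\pi_i^{\binom{\ell_i}{2}m}$. The only cosmetic difference is that you invoke Lemma~\ref{lem:35.1.5}, Lemma~\ref{lem:34.1.2}(a) and the $\binom{t+1}{2}$-exponent identity from Theorem~\ref{th:Frob2}, none of which is actually needed here, since $\Delta(E_i^{(m)})=\sum_{p+r=m}(\pi_iq_i)^{pr}E_i^{(p)}(\tilde{J}_i\tilde{K}_i)^r\otimes E_i^{(r)}$ contains no $(\qq,\pi)$-binomial coefficients, so the vanishing follows directly from the definition of $\Fr$ and the scalar match reduces to $(\pi_i^{\diamond}\qq_i^{\diamond})^{(p+\langle i,\nu\rangle/\ell_i)r}=(\pi_i\qq_i)^{(p\ell_i+\langle i,\nu\rangle)(r\ell_i)}$.
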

 (In the usual quantum group setting this was noted by \cite[35.1.10]{Lu94}.)
 
\begin{proof}
	It suffices to check on the generators $ E_i^{(n)} \bold{1}_\lambda $ and $ F_i^{(n)} \bold{1}_\lambda $. 
	Let $ n = m \ell_i \in \ell_i \Z $, and recall that $ \Fr (E_i^{(m\ell_i)}\bold{1}_{\lambda}) = 
	\pi_{i}^{\binom{\ell_{i}}{2}m}E_i^{(m)}\bold{1}_{\lambda} $ in ${}_{R}\dot{\UU}^{\diamond}$. Using the formula (above \cite[Proposition 2.2.2]{CHW1})
	\[
	\Delta(E_i^{(m)}) = \sum_{p + r = m} (\pi_i q_i)^{pr} E_i^{(p)} (\tilde{J}_i \tilde{K}_i)^r \otimes E_i^{(r)}
	\]
	we see that the nonzero parts in $ \dot{\Delta}^\diamond (\Fr( E_i^{(m\ell_i)}\bold{1}_{\lambda} )) $ computed via \eqref{eq:FrE} are of the form 
	\[ 
	\pi_{i}^{\binom{\ell_{i}}{2}m} ( \pi_i^\diamond q_i^\diamond )^{(p + \langle i, \nu \rangle^\diamond)r } E_i^{(p)} \bold{1}_\nu \otimes E_i^{(r)} \bold{1}_{\lambda - \nu}, \qquad p + r = m
	\]
	for various $ \nu \in X^\diamond $, which coincides with $ \Fr \otimes \Fr $ applied to terms in $ \dot{\Delta} (E_i^{(m\ell_i)}\bold{1}_{\lambda})) $ of the form 
	\[ 
	( \pi_i q_i )^{(p\ell_i + \langle i, \nu \rangle)(r\ell_i)} E_i^{(p\ell_i)} \bold{1}_\nu \otimes E_i^{(r\ell_i)} \bold{1}_{\lambda - \nu}, \qquad p + r = m,
	\]
where we note there is a factor contributing from \eqref{eq:FrE} which matches up with the previous part thanks to $\pi_{i}^{\binom{\ell_{i}}{2}p+\binom{\ell_{i}}{2}r} = \pi_{i}^{\binom{\ell_{i}}{2}m}$; the remaining terms are zero under $ \Fr \otimes \Fr $ since at least one of the divided powers of $ E_i $ appearing in either tensor factor must be not divisible by $ \ell_i $. 
	
	On the other hand, if $ n $ is not divisible by $ \ell_i $, then the right hand side will also be zero, since all the non-zero parts of $ \dot{\Delta} (E_i^{(n)}\bold{1}_{\lambda})) $ will have a tensor factor containing some divided power of $ E_i $ not divisible by $ \ell_i $. 
	
	A similar verification takes care of $ F_i^{(n)} 1_\lambda $.
\end{proof}

\subsection{}

The maps $\dot\Delta$ and $\dot S$ restrict to maps on ${}_{R}\dot{\mathfrak{u}}$, which extend to $R^{\pi}$-linear maps $\hat{\Delta}$ and $\hat{S}$ on ${}_{R}\hat{\mathfrak{u}}$ in the obvious way. 
Henceforth, when we refer to $\hat{\Delta}$ and $\hat{S}$ we mean the restrictions to ${}_{R}\mathfrak{u}.$  

Additionally, for any basis $\bold B$ of $\mathfrak{f}$ consisting of weight vectors, with unique zero weight element equal to $1$, we define  an $R^{\pi}$-linear map $\hat{e}: {}_{R}\mathfrak{u} \rightarrow R^{\pi}$ by:

\begin{align*}
\hat{e}(rb^{+}b'^{-}\bold{1}_{\bold{c}_{\bold a}}) =
\begin{cases}
r, & \text{if   $b,b' =1$ and $\bold{a} ={\bold{0}}$},
 \\
0, & \text{otherwise}. 
\end{cases}
\end{align*}
where $b, b' \in \bold{B}$, $r \in R^{\pi}$, and $\bold c_{\bold a}$ in \eqref{eq:coset}. 

Define the following elements: 
\begin{align}
  \label{eq:KJ}
K_{i} = \sum_{\lambda \in X} \qq^{\langle i, \lambda \rangle}\bold{1}_{\lambda}, \quad J_{i} = \sum_{\lambda \in X} \pi^{\langle i, \lambda \rangle}\bold{1}_{\lambda}, \quad 1 = \sum_{\lambda \in X} \bold{1}_{\lambda}.
\end{align}

\begin{prop}\quad
\begin{enumerate}
\item
The $R^\pi$-algebra ${}_{R}\mathfrak{u}$ has a generating set  $\{E_{i}, F_{i}\; (\forall i \text{ with } \ell_{i} \geq 2), K_{i}, J_{i} \; (\forall i \in I) \}$.
\item
$({}_{R}\mathfrak{u}, \hat{\Delta}, \hat{e}, \hat{S})$ forms a Hopf superalgebra. 
\end{enumerate}
\end{prop}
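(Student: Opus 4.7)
The plan is to handle the two parts separately, with part~(1) providing the idempotents needed for part~(2).

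For part~(1), since $E_i, F_i$ with $\ell_i \ge 2$ already produce all expressions $x^+$ and $x'^-$ for $x, x' \in \kf$ appearing in the spanning set for ${}_R\mathfrak{u}$, the essential content is to express each idempotent $\bold{1}_{\bold{c}_{\bold{a}}}$ in \eqref{eq:coset} as an $R^\pi$-polynomial in the $K_i$ and $J_i$. The strategy is discrete Fourier inversion on the finite abelian group of cosets. Since $\qq^{2\tilde\ell}=1$, each monomial $\prod_i K_i^{r_i} J_i^{s_i}$ acts on $\bold{1}_\lambda$ by a scalar depending only on the residue class of $(\langle i, \lambda\rangle)_i$ modulo $2\tilde\ell$, so it is a constant $\chi_{\bold{r},\bold{s}}(\bold{a})$ on each coset $\bold{c}_{\bold a}$. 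Varying $r_i \in \Z/2\tilde\ell$ and $s_i \in \Z/2$ yields enough characters to separate the (finitely many) nonempty cosets, and standard orthogonality then writes each $\bold{1}_{\bold{c}_{\bold a}}$ as an $R^\pi$-linear combination of such monomials, placing it inside the subalgebra generated by the $K_i$ and $J_i$.

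For part~(2), the Hopf superalgebra axioms already hold on ${}_R\dot{\bold{U}}$ and extend to ${}_R\hat{\mathfrak u}$, so the real task is to verify that $\hat{\Delta}$ and $\hat S$ restrict to ${}_R\mathfrak{u}$ inside an appropriate completed tensor square. On the generators one computes $\hat\Delta(K_i) = K_i \otimes K_i$, $\hat\Delta(J_i) = J_i \otimes J_i$, $\hat\Delta(E_i \bold{1}_{\bold{c}_{\bold a}}) = E_i \bold{1}_{\bold{c}_{\bold a}} \otimes 1 + \tilde J_i \tilde K_i \otimes E_i \bold{1}_{\bold{c}_{\bold a}}$, and the analogue for $F_i$, using the coproduct formulas recalled before Theorem~\ref{th:Frob2}. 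Since $\tilde J_i \tilde K_i$ is a monomial in the $K_j, J_j$, and the coproduct of an idempotent is the finite sum $\hat\Delta(\bold{1}_{\bold{c}_{\bold a}}) = \sum_{\bold b + \bold b' \equiv \bold a \, (\mathrm{mod}\, 2\tilde\ell)} \bold{1}_{\bold{c}_{\bold b}} \otimes \bold{1}_{\bold{c}_{\bold{b'}}}$, each generator maps into a finite sum of simple tensors in ${}_R\mathfrak{u} \otimes {}_R\mathfrak{u}$. Multiplicativity then propagates this to all of ${}_R\mathfrak{u}$. The antipode $\hat S$ is handled similarly: $\hat S(K_i)$ is a positive power of $K_i$ (since $K_i$ has finite order in $\hat{\mathfrak{u}}$), $\hat S(J_i) = J_i$, and on the $E_i, F_i$-generators the formula for $\dot S$ produces elements still within ${}_R\mathfrak{u}$; the counit $\hat e$ is built into the definition.

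The main obstacle I anticipate is making precise the target of the coproduct on ${}_R\mathfrak{u}$, since elements of the small form already involve the infinite sums defining $\bold{1}_{\bold c}$. One must confirm that after collecting terms, $\hat\Delta$ of a generic element lands in the natural completed tensor square and not outside it; the finiteness check reduces to the observation above that each generator contributes only a bounded weight shift combined with a finite sum of coset-idempotent pairs. Once this coherence is established, the Hopf superalgebra axioms on ${}_R\mathfrak{u}$ are inherited by restriction from those on ${}_R\hat{\mathfrak u}$.
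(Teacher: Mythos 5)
Your proposal follows essentially the same route as the paper: part (1) is proved there by exactly your Fourier-inversion idea, made explicit as the projector formula $\mathbf{1}_{\mathbf{c}}=\prod_{i\in I}(2\tilde{\ell})^{-1}(1+\pi_{\mathbf{c},i}J_i)\bigl(1+\qq_{\mathbf{c},i}^{-1}K_i+\cdots+\qq_{\mathbf{c},i}^{1-\tilde{\ell}}K_i^{\tilde{\ell}-1}\bigr)$ after observing that $K_i$, $J_i$ and the $\mathbf{1}_{\mathbf{c}}$ all lie in ${}_{R}\mathfrak{u}$, and part (2) is likewise handled by checking on these generators that $\hat{\Delta},\hat{e},\hat{S}$ are given by the same formulas as $\Delta,e,S$, have image in ${}_{R}\mathfrak{u}\otimes{}_{R}\mathfrak{u}$ (respectively ${}_{R}\mathfrak{u}$), and hence inherit the Hopf superalgebra axioms. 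The only differences are presentational: you invoke character orthogonality where the paper writes the explicit inversion formula, and you spell out the finiteness/completion bookkeeping for $\hat{\Delta}$ that the paper leaves implicit.
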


\begin{proof}
The elements in \eqref{eq:KJ} can be written as 
\[
K_{i} = \sum_{\bold{c}} \qq_{\bold{c},i}\bold{1}_{\bold{c}}, \quad J_{i} = \sum_{\bold{c}} \pi_{\bold{c},i}\bold{1}_{\bold{c}}, \quad 1 = \sum_{\bold{c}} \bold{1}_{\bold{c}},
\]
where we have defined $\qq_{\bold{c},i} = \qq^{\langle i, \lambda \rangle}$ and $\pi_{\bold{c},i} = \pi^{\langle i, \lambda \rangle}$ for any $\lambda \in \bold{c}.$
This implies that these elements are also in ${}_{R}\mathfrak{u}.$
Moreover, we have
\[
\bold{1}_{\bold{c}} = \prod_{i \in I}(2\tilde{\ell})^{-1}(1 + \pi_{\bold{c}, i}J_{i})(1 +\qq_{\bold{c},i}^{-1}K_{i} +\qq_{\bold{c},i}^{-2}K_{i}^{2} +. . .+ \qq_{\bold{c},i}^{1 - \tilde{\ell}}K_{i}^{\tilde{\ell}-1}).
\]
This proves (1). 

A direct computation using these generators shows that $\hat{\Delta}$, $\hat{e}$ and $\hat{S}$ are given by the same formulas as $\Delta$, $e$ and $S$, the former maps inherit the following properties of the latter: $\hat{\Delta}$ is a homomorphism which satisfies the coassociativity  (cf. \cite[Lemmas 2.2.1 and 2.2.3]{CHW1}), $\hat{e}$ is a homomorphism (cf. \cite[Lemma 2.2.3]{CHW1}), and $\hat{S}(xy) = \pi^{p(x)p(y)}\hat{S}(y)\hat{S}(x)$ (cf. \cite[Lemma 2.4.1]{CHW1}). Moreover, the image of $\hat{\Delta}$ (respectively, $\hat{S}$) lies in ${}_{R}\mathfrak{u} \otimes {}_{R}\mathfrak{u}$ (respectively, ${}_{R}\mathfrak{u}$).  Hence (2) holds. 
\end{proof}

\subsection{}

We consider the Cartan datum associated to the Lie superalgebra $\osp(1|2n)$, where $n = |I|$, with the following Dynkin diagram:
\begin{center}
\begin{tikzpicture}
\node at (0,0) {$\bigcirc$};
\draw (.2,0)--(1.3,0);
\node at (1.5,0) {$\bigcirc$};
\draw (1.7,0)--(2.6,0);
\node at (3,0) {. . .};
\draw (3.4,0)--(4.3,0);
\node at (4.5,0) {$\bigcirc$};
\draw (4.7,0)--(5.8,0);
\node at (6,0) {$\bigcirc$};
\draw (6.2,0)--(7.1,0);
\node at (7.5,0) {. . .};
\draw (7.9,0)--(8.8,0);
\node at (9,0) {$\bigcirc$};
\draw (9.2,-.1)--(10.3,-.1);
\node at (9.75,0) {\Large $>$};
\draw (9.2,.1)--(10.3,.1);
\node at (10.5,-0.05) {\Huge $\bullet$};
\node at (0,-.5) {$1$};
\node at (1.5,-.5) {$2$};
\node at (9,-.5) {$n-1$};
\node at (10.5,-.5) {$n$};
\end{tikzpicture}
\end{center}
The black node denotes the (only) odd simple root. We set
\begin{align*}
i \cdot i = 
\begin{cases}
2, & \text{if   $i$ is odd},
 \\
4, & \text{if $i$ is even}. 
\end{cases}
\end{align*}
The above Cartan datum on $I$ is a super Cartan datum satisfying the bar-consistent condition in the sense of \S \ref{subsec:data}.

\begin{prop}
The small quantum covering group ${}_{R}\mathfrak{u}$ of type $\osp(1|2n)$ is a finite dimensional $R^{\pi}$-module. In particular,
\begin{align*}
\dim_{R^{\pi}}({}_{R}\mathfrak{u}) 
= \dfrac{\ell^{2n^2}}{\gcd(2, \ell)^{2n^{2} - 2n}} (2\tilde\ell)^{n} =
\begin{cases} 
  \ell^{2n^{2}}(4\ell)^{n}, & \text{for $\ell$ odd},  \\
 \dfrac{\ell^{2n^2}}{2^{2n^{2} - 2n}} (2\ell)^{n}, & \text{for $\ell$ even,}
\end{cases}
\end{align*}
when $X$ is the weight lattice, and similarly,
\begin{align*}
\dim_{R^{\pi}}({}_{R}\mathfrak{u}) 
= \dfrac{\ell^{2n^2}}{\gcd(2, \ell)^{2n^{2} - 2n}} 2^{n-1}\tilde\ell^{n} =
\begin{cases} 
  \ell^{2n^{2}}2^{2n-1}\ell^{n}, & \text{for $\ell$ odd},  \\
 \dfrac{\ell^{2n^2}}{2^{2n^{2} - 2n}} 2^{n-1}\ell^{n}, & \text{for $\ell$ even,}
\end{cases}
\end{align*}
when $X$ is the root lattice.
\end{prop}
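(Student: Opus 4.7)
The plan is to establish a triangular decomposition of ${}_{R}\mathfrak{u}$ reducing the dimension count to two independent pieces --- the ``small half'' $\kf$ of Theorem~\ref{thm:tensor} and the ``torus'' of cosets --- and then to evaluate each piece. Specifically, I will show that
$$\dim_{R^\pi} {}_{R}\mathfrak{u} \;=\; (\dim_{R^\pi}\kf)^{2}\cdot N_{\mathrm{cos}},$$
where $N_{\mathrm{cos}}$ is the cardinality of the image of $X$ under $\lambda\mapsto(\langle i,\lambda\rangle \bmod 2\tilde\ell)_{i\in I}$. The upper bound on the dimension follows from the spanning set $\{x^{+}\bold{1}_{\bold{c}}x'^{-}: x,x'\in\kf\}$ obtained by repeatedly applying the commutation relations of Lemma~\ref{lem:31.1.3}. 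For linear independence, restrict the $R^\pi$-basis $\{b^{+}\bold{1}_{\lambda}b'^{-} : b,b'\in\bold{B},\,\lambda\in X\}$ of ${}_{R}\dot{\bold{U}}$ (see \cite[Lemma~5]{Cl14} and Lemma~\ref{lem:31.1.3}) to those $b,b'$ lying in a weighted $R^\pi$-basis of $\kf$, then group the weight idempotents by $\bold{1}_{\bold{c}}=\sum_{\lambda\in\bold{c}}\bold{1}_{\lambda}$; since distinct cosets are disjoint, these elements are $R^\pi$-independent in ${}_{R}\hat{\mathfrak{u}}$.

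To count the cosets, first take $X$ to be the weight lattice: the fundamental weights $\omega_{j}$ (with $\langle i,\omega_{j}\rangle=\delta_{ij}$) yield a surjection $X\to(\Z/2\tilde\ell\Z)^{n}$, so $N_{\mathrm{cos}}=(2\tilde\ell)^{n}$. Now take $X=Q$ the root lattice (with $P$ denoting the weight lattice); the underlying root data of $\osp(1|2n)$ in this normalization is of type $B_{n}$, so $|P/Q|=2$, and since this index divides $2\tilde\ell$ we have $2\tilde\ell P\subseteq Q\subseteq P$. Therefore the image of $Q$ in $P/2\tilde\ell P\cong(\Z/2\tilde\ell\Z)^{n}$ has cardinality $(2\tilde\ell)^{n}/2=2^{n-1}\tilde\ell^{n}$, matching the two claimed torus factors of the proposition.

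Finally, $\dim_{R^\pi}\kf$ is computed via Proposition~\ref{prop:35.4.4} (applied with $X$ the weight lattice; the choice of $X$ does not affect $\kf$, which is intrinsic to $\fR$): then $\kf$ is $R^\pi$-isomorphic to the simple integrable module ${}_{R}V(\lambda^{\circ})$ with $\langle i,\lambda^{\circ}\rangle=\ell_{i}-1$. For $\osp(1|2n)$ one has $\ell_{i}=\ell/\gcd(2,\ell)$ for the even simples $i<n$ and $\ell_{n}=\ell$. Using the braid group action of \cite{CH16} together with the higher super Serre relations \cite[(4.1), Proposition~4.2.4]{CHW1}, I would construct a PBW-type basis of $\kf$ indexed by the $n^{2}$ positive even roots of the underlying $B_{n}$-shape: each of the $n(n-1)$ short-root vectors $\theta_{\epsilon_{i}\pm\epsilon_{j}}$ ($i<j$) contributes exponent range $\{0,\dots,\ell/\gcd(2,\ell)-1\}$, while each of the $n$ ``combined'' $\epsilon_{i}/2\epsilon_{i}$ factors (where $\theta_{\epsilon_{i}}^{2}$ plays the role of a root vector for $2\epsilon_{i}$) contributes range $\{0,\dots,\ell-1\}$. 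This yields $\dim\kf=\ell^{n^{2}}/\gcd(2,\ell)^{n^{2}-n}$, and substituting this together with the coset counts above produces the claimed dimensions. The main obstacle is precisely this last PBW analysis: in the even-$\ell$ case one must track the interplay between the odd simple root and its double carefully, verifying via Lemma~\ref{lem:34.1.2} on the vanishing of $(q,\pi)$-binomials that the combined $\epsilon_{n}/2\epsilon_{n}$ direction contributes exactly $\ell$ rather than $\ell\cdot\ell/\gcd(2,\ell)$ or some other product; a parity-by-parity case check, informed by the classical Steinberg-type dimension $\ell^{|\Phi^{+}_{B_{n}}|}=\ell^{n^{2}}$ for the underlying $B_{n}$ root system at odd $\ell$, should dispose of this cleanly.
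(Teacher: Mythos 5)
Your overall skeleton agrees with the paper's: the dimension is computed as $(\dim_{R^\pi}\kf)^2$ times the number of cosets $\bold{c}_{\bold a}$ meeting $X$, and your coset counts $(2\tilde\ell)^n$ (weight lattice) and $2^{n-1}\tilde\ell^{\,n}$ (root lattice, via $|P/Q|=2$ for the underlying $B_n$ datum) are exactly the ones the paper asserts; your freeness argument (restricting the basis $\{b^+\bold{1}_\lambda b'^-\}$ of ${}_R\dot{\bold U}$ from Lemma~\ref{lem:31.1.3} to a weight basis of $\kf$ and regrouping idempotents by cosets) is in fact spelled out more carefully than in the paper, which simply states that ${}_R\mathfrak{u}$ is a free $\kf\otimes\kf^{\mathrm{opp}}$-module on the $\bold 1_{\bold c}$.

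The genuine gap is the evaluation of $\dim_{R^\pi}\kf=\ell^{n^2}/\gcd(2,\ell)^{n^2-n}$. You propose to obtain it from a PBW-type basis of $\kf$ built from the braid group action of \cite{CH16} and the higher Serre relations of \cite{CHW1}, with prescribed exponent ranges for each positive root, but you acknowledge that this analysis is not carried out; and it cannot be waved through, because the nilpotency degrees of the root vectors in the covering/super algebra at a root of $1$ --- in particular the interplay between the odd root $\epsilon_n$ and $2\epsilon_n$, and the even-$\ell$ case where $\ell_i$ differs between node $n$ and the other nodes --- are precisely the content of the dimension formula, and no cited reference supplies a PBW theorem for $\kf$ in this setting. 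The paper avoids this entirely: having identified $\kf\cong{}_RV(\lambda)$ with $\langle i,\lambda\rangle=\ell_i-1$ via Proposition~\ref{prop:35.4.4} (a step you mention but do not exploit), it splits ${}_RV(\lambda)\cong V(\lambda)_1\oplus V(\lambda)_{-1}$ along the idempotents $(\pi\pm1)/2$, invokes the coincidence of characters of integrable highest weight modules at $\pi=1$ and $\pi=-1$ (\cite{KKO14}, \cite[Remark 2.5]{CHW2}) to reduce to the non-super small quantum group of type $B_n$, and then quotes Lusztig's dimension formula \cite[Theorem 8.3(iv)]{Lu90b}. To repair your argument, either carry out the PBW construction in full (a substantial undertaking) or replace that step by this specialization-and-character-comparison argument, which uses only results already available.
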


\begin{proof}
Note that ${}_{R}\mathfrak{u}$ is a $\mathfrak{f} \otimes \mathfrak{f}^{\text{opp}}$ module with basis given by the $\bold{1}_{\bold{c}}$ defined above. This basis has at most $(2\tilde{\ell})^{n}$ elements for any $X$. In particular, it has $(2\tilde{\ell})^{n}$ elements when $X$ is the weight lattice, and $2^{n-1}\tilde{\ell}^{n}$ elements when $X$ is the root lattice, as the root lattice is index $2$ in the weight lattice. Moreover, by Proposition \ref{prop:35.4.4}, we have that $\dim_{R^{\pi}}(\mathfrak{f}^{\pm}) = \dim_{R^{\pi}}({}_{R}V(\lambda))$, where $\lambda$ is the unique weight such that $\langle i, \lambda \rangle = \ell_{i} - 1$ for each $ i \in I.$ Let $V(\lambda)_{1}$ (respectively, $V(\lambda)_{-1}$) be the quotient of the Verma module of highest weight $\lambda$ by its maximal ideal for the quantum group (resp. quantum supergroup) to which the quantum covering group specializes at $\pi = 1$ (respectively, $\pi = -1$) with base field $R = \mathbb{Q}(\vep)$ (recall from \S\ref{subsec:root1} that $\vep$ is an $\ell'$-th root of unity). Because 
\[
{}_{R}V(\lambda) = (\pi + 1){}_{R}V(\lambda) \oplus (\pi - 1){}_{R}V(\lambda) \cong V(\lambda)_{1} \oplus V(\lambda)_{-1}
\]
 and the characters of $V(\lambda)_{1}$ and $V(\lambda)_{-1}$ coincide for dominant weights (cf. \cite{KKO14}, \cite[Remark 2.5]{CHW2}), we have 
\begin{align*}
\dim_{R^{\pi}} \mathfrak{f}^{\pm} &=\dim_{R^\pi}\; {}_{R}V(\lambda)
= \dim_{R} V(\lambda)_{1} = \dim_{R}  \mathfrak{f}_{1}^{\pm}
= \dfrac{\ell^{n^2}}{\gcd(2, \ell)^{n^{2} - n}}
\end{align*}
where $\mathfrak{f}_{1}$ is the (non-super) half small quantum group, i.e., $\mathfrak{f}$ specialized at $\pi = 1$. The last equality is due to \cite[Theorem 8.3(iv)]{Lu90b}.

\end{proof}

\end{document}